\definecolor{dullmagenta}{rgb}{0.4,0,0.4}   
\definecolor{darkblue}{rgb}{0,0,0.4}
\definecolor{darkkgreen}{rgb}{0,0.7,0}
\definecolor{darkkred}{rgb}{0.7,0,0}
\theoremstyle{plain}
\newtheorem{maintheorem}{Theorem}
\newtheorem{maincorollary}[maintheorem]{Corollary}
\newtheorem{theorem}{Theorem}[section]
\newtheorem*{thm*}{Theorem}
\newtheorem{lemma}[theorem]{Lemma}
\newtheorem{corollary}[theorem]{Corollary}
\newtheorem{proposition}[theorem]{Proposition}
\newtheorem{problem}[theorem]{Problem}
\newtheorem*{prob*}{Problem}
\theoremstyle{definition}
\newtheorem{definition}[theorem]{Definition}
\theoremstyle{remark}
\newtheorem{remark}[theorem]{Remark}
\newtheorem*{rem*}{Remark}
\newcommand{\claim}{\medbreak {\scshape Claim}. }
\newcommand{\Proof}{\medbreak\noindent {\it Proof}. }
\newcommand{\Proofof}[1]{\medbreak \noindent{\it Proof of #1}. }
\newcommand{\Endproof}{\hfill\qed\medbreak}
\newcommand{\References}[1]{
}
\newcommand{\name}[1]{{\scshape#1}}
\newcommand{\jname}[1]{{\it#1}}
\newcounter{Lcount}
\newcommand{\aA}{{\mathbb A}}
\newcommand{\NN}{{\mathbb N}}
\newcommand{\RR}{{\mathbb R}}
\newcommand{\TT}{{\mathbb T}}
\newcommand{\ZZ}{{\mathbb Z}}
\newcommand{\DD}{{\mathbb D}}
\newcommand{\cA}{{\mathcal A}}
\newcommand{\cB}{{\mathcal B}}
\newcommand{\cD}{{\mathcal D}}
\newcommand{\cG}{{\mathcal G}}
\newcommand{\cH}{{\mathcal H}}
\newcommand{\cI}{{\mathcal I}}
\newcommand{\cJ}{{\mathcal J}}
\newcommand{\cK}{{\mathcal K}}
\newcommand{\cL}{{\mathcal L}}
\newcommand{\cO}{{\mathcal O}}
\newcommand{\cR}{{\mathcal R}}
\newcommand{\cS}{{\mathcal S}}
\newcommand{\cT}{{\mathcal T}}
\newcommand{\cU}{{\mathcal U}}
\newcommand{\cV}{{\mathcal V}}
\newcommand{\tih}{\tilde h}
\newcommand{\tN}{\tilde N}
\newcommand{\biR}{\textbf{\textit{R}}}
\newcommand{\Diff}{{\rm Diff}}
\newcommand{\diff}{{\rm Diff}}
\newcommand{\diam}{{\rm diam}}
\newcommand{\inter}{{\rm int}}
\newcommand{\orbit}{\cO}
\newcommand{\andrm}{~ {\rm and} ~ }
\newcommand{\ifrm}{~ {\rm if} ~ }
\newcommand{\veps}{\varepsilon}
\newcommand{\w}{\omega}
\newcommand{\x}{\times}
\newcommand{\vazio}{\varnothing}
\newcommand{\La}{\Lambda}
\newcommand{\UU}{\mathbb{U}}
\begin{document}

\title{Robust Transitivity in Hamiltonian Dynamics}
\author{Meysam Nassiri $~ ~$  Enrique R. Pujals}
\address{School of Mathematics, Institute for Research in Fundamental Sciences (IPM),  P. O. Box 19395-5746,  Tehran, Iran.}
\email{nassiri@impa.br}
\address{IMPA,  Estrada D. Castorina 110, 22460-320 $~$ Rio de Janeiro, Brazil.}
\email{enrique@impa.br}

\begin{abstract} 
A goal of this work is to study the dynamics in the complement of KAM tori with focus on non-local robust transitivity.
We introduce $C^{r}$ open sets ($r=1, 2, \dots, \infty$) of symplectic diffeomorphisms and Hamiltonian systems,
exhibiting \textit{large} robustly transitive sets. 
We show that the $C^{\infty}$ closure of such open sets contains a variety of systems, including  so-called {\it a priori} unstable integrable systems. 
In addition, the existence of ergodic measures with large support is obtained for all those systems.
A main ingredient of the proof is a combination of studying minimal dynamics of symplectic iterated function systems and a new tool in Hamiltonian dynamics which we call ``symplectic blender".

\vspace{.5cm}
\noindent {\scshape R\'esum\'e.}   {\it  Transitivit\'e robuste en dynamique hamiltonienne}.  Un objectif de ce travail est d'\'etudier la dynamique  sur le compl\'ementaire des tores KAM en mettant l'accent sur la transitivit\'e robuste non locale.
Nous introduisons les ensembles ouverts de diff\'eomorphismes symplectiques et de syst\`emes hamiltoniens, pr\'esentant de grands ensembles robustement transitifs.  L'adh\'erence de ces ensembles ouverts (en topologie $C^r$, $r=1, 2, \dots, \infty$) contient un grand nombre de syst\`emes, y compris les syst\`emes int\'egrables  a priori instables.
En outre, l'existence de mesures ergodiques  avec un grand support est obtenu pour l'ensemble de ces syst\`emes.
L'ingr\'edient principal des preuves est de combiner l'\'etude  de  syst\`emes  it\'er\'es de fonctions  de dynamique minimale  et un nouvel outil de la dynamique hamiltonienne que nous appelons ``m\'elangeurs symplectiques".

\end{abstract}

\maketitle

\setcounter{tocdepth}{1}
\begin{small}
\tableofcontents
\end{small}

\section{Introduction and main results} \label{sec intro}
The theory of Kolmogorov, Arnold and Moser (KAM) gives a precise description of
the dynamics of a set of large measure of orbits for any small
perturbation of a non-degenerate integrable Hamiltonian system. These orbits lie
on the invariant  KAM tori for which the dynamics 
are equivalent to irrational (Diophantine) rotations. 
In the case of autonomous systems in two degrees of freedom 
or time-periodic systems in one degree of freedom (i.e., 1.5 degrees of freedom), the KAM Theorem proves the stability of {\it all} orbits, in the sense that the action variable do not vary
much along the orbits.  This, of course, is not the case if the degree of freedom is larger than two, where the KAM tori has  codimension of at least two. A natural question arises: {\it Do generic perturbations of integrable systems in higher dimensions exhibit instabilities?} 
The first example of instability is due to Arnold \cite{a2}, who constructed a family of small perturbations of a non-degenerate integrable Hamiltonian system that exhibits instability in the sense that there are orbits with large action variation. This kind of topological instability is sometimes called the \textit{Arnold diffusion}. Indeed, it was conjectured \cite[pp. 176]{a1} that instability is a common phenomenon in the complement of integrable systems.
Aside the several deep contributions towards this conjecture, especially in  recent years (see e.g. \cite{cy}, \cite{du}, \cite{dls}, \cite{kl}, \cite{klm}, \cite{ma}, \cite{ms}, \cite{x}, and references therein), it is still one of the central problems in Hamiltonian dynamics.

Here, we would like to suggest a different  approach related to the instability problem. We propose to focus on the existence and abundance of a dynamical phenomenon, more sophisticated than instability, which  is {\it ``large'' robustly transitive sets}.
Roughly speaking, a set is transitive if it contains a dense orbit inside, and it is robustly transitive if the same holds for all nearby systems (see Definitions  \ref{def large}, \ref{def RT}). 

The present  paper is devoted to studying the non-local robust transitivity (global or non-global) in symplectic and Hamiltonian dynamics with the goal of better understanding the dynamics in  the complement of KAM tori, and with application to the instability problem.

In the non-conservative context, there are many important recent contributions about robust transitivity. Note that a diffeomorphism of a manifold $M$ is transitive if it has a dense 
orbit in the whole manifold. Such a diffeomorphism is called $C^r$ robustly 
transitive if it belongs to the $C^r$ interior of the set of transitive diffeomorphisms.
It has been known since the 1960's that any (transitive) hyperbolic diffeomorphism is $C^1$ robustly transitive. 
The first examples of non-hyperbolic $C^1$ robustly transitive sets are credited to M. Shub \cite{sh} and R. Ma\~n\'e \cite{mn}. For a long time their examples remained unique. Then,  C. Bonatti and L. D\'iaz  \cite{bd} introduced a semi-local source for transitivity, called $blender$, which is  $C^1$ robust.  Using this tool they constructed new examples of robustly transitive sets and diffeomorphisms. For recent results involving blenders, see  \cite{bd08}.
For the recent surveys on this topic and  robust transitivity on compact manifolds, see \cite[chapters 7,8]{bdv}, \cite{ps}, \cite{psh}.

In this paper, we develop the methods of robust transitivity within the context of  symplectic
and Hamiltonian systems. We apply them for the nearly integrable
symplectic and Hamiltonian systems with more than two degrees of freedom. Following this approach, we introduce open sets of such Hamiltonian or symplectic diffeomorphisms exhibiting \textit{large} robustly transitive sets and containing  integrable systems in their closure. 
Then, the instability (Arnold diffusion) is obtained as a
consequence of the existence of large robustly transitive sets.  We want to point out that the results obtained also include systems not necessarily close to integrable ones. 

We also obtain good information about the structure and dynamics of the robustly transitive sets that yields to topological mixing and even ergodicity.
These are the scope of theorems stated in Sections 1.2 - 1.6.

We would like to compare the usual notion of instability (i.e. Arnold diffusion as treated in \cite{dls, cy, x})  with robust transitivity (or topological mixing) obtained in the thesis of our theorems. Observe that the usual notion of instability is  a $C^0$ robust property since it depends only on a finite number of iterations. However, there are no topologically mixing or transitive systems which are  $C^0$ robust (see also Section \ref{s problem ph}).

Let us  emphasize that in this paper we deal with the $C^r$-topology for any $r=1, \dots, \infty$. 
We also work  with  non-compact manifolds. 

Section  \ref{s pre} 
 introduces some definitions and notations; in Sections \ref{s rob} - \ref{s intro dichotomy}  the main theorems are stated.
The two main ingredients used in the proofs are described informally in Sections \ref{s intro blender} and  \ref{s intro ifs}. Finally, Section  \ref{s heuristic} provides a heuristic explanation of how these ingredients are combined and used.

\bigskip

\subsection{Preliminaries and definitions}\label{s pre}

Some of the definitions below are standard  in the literature so we only highlight the ones that are not common.   

Let $M$ be a boundaryless Riemannian manifold  (not necessarily compact) and   $f : M \longrightarrow  M$ be a $C^r$ diffeomorphism of a manifold $M$. From now on we assume that $r\in [1,\infty]$. We denote by $\Diff^r(M)$  the space of $C^r$
diffeomorphisms of $M$ endowed with the uniform $C^r$ topology.

An $f$-invariant subset $\Lambda$ is \textit{partially hyperbolic} if its tangent bundle $T_{\Lambda}M$ splits as a Whitney sum of $Df$-invariant subbundles:   
$$T_{\Lambda} M = E^u\oplus E^c \oplus E^s,$$
and there exists a Riemannian metric on $M$, a positive integer $n_0$ and constants $0< \lambda <1$ and $\mu > 1$ 
such that for every $p\in \Lambda$,
$$ 0<\parallel D_p f^{n_0} |_{E^s} \parallel< \lambda<  m(D_p f^{n_0} |_{E^c} ) \leq
\parallel D_p f^{n_0} |_{E^c} \parallel<\mu< m(D_p f^{n_0} |_{E^u} ).$$

The co-norm $m(A)$ of a linear operator $A$ between Banach spaces is defined by 
$ m(A) :=\inf\{  \parallel A(v) \parallel  ~ : ~ \parallel v\parallel  =1\} $. 
The subbundles $E^u$, $E^c$ and $E^s$ are referred to the unstable, center and
stable bundles of $f$, respectively. 

A partially hyperbolic set is called \textit{hyperbolic} if  its center bundle is trivial, i.e. $E^c = \{0\}$. 

\begin{definition}[domination] 
Let $f$ and $g$ be two diffeomorphisms on manifolds $M$ and $N$ respectively. Suppose that $\Lambda \subset M$ is an invariant hyperbolic set for $f$.  
We say that $g $ is dominated by $f|_{\Lambda}$ if $\Lambda \x N$ is a partially hyperbolic set for $f\x g$, with $E^c=T N$.
\end{definition}

 The {\it homoclinic class of a hyperbolic set}  is the closure of the transversal intersections of its stable and unstable manifolds. In the case of a hyperbolic periodic point $P$ of a diffeomorphism $F$, we denote its homoclinic class by $H(P,F)$. Moreover, for any $G$ nearby $F$, we denote by $P_G$ the {\it analytic continuation} of $P$ and by  $H(P_G,G)$ its homoclinic class.

\begin{definition}[weak hyperbolic point]
Let $p$ be a hyperbolic periodic point of $g$ of period $k$, we say that $p$ is
{\it $\delta$-weak hyperbolic} if 
$$ 1-\delta <  m(D_p g^k |_{E_p^s} ) < \parallel D_p g^k |_{E_p^s} \parallel <1 < m(D_p g^k |_{E_p^u}) < \parallel D_p g^k |_{E_p^u}  \parallel< \frac{1}{1-\delta}.$$
\end{definition} 

\bigskip
Let $X$ be a metric space and $F:X \rightarrow X$ a continuous transformation. A set $Y \subset X$ (not necessarily compact) is
\textit{transitive} for $F$ if for any $U_1, U_2$ open in $X$, such that
$U_i\cap Y\neq \varnothing$, there is some $n$ with $F^n (U_1)\cap U_2 \neq
\varnothing$. If in addition, for any open sets $U_1 , U_2\subset Y$ (in the
restricted topology), there is some $n$ with $F^n (U_1)\cap U_2 \neq
\varnothing$, then we say $Y$ is \textit{strictly transitive}.   
A stronger property is \textit{topological mixing}, where $F^n (U_1)\cap U_2 \neq \varnothing$ holds for \textit{any} sufficiently large $n$. Similarly we define \textit{strictly topologically mixing}. 

In the next definitions we denote by $\cD^r$ a subset of $\diff^{r}(M)$ with the $C^{r}$ topology. 

\begin{definition}[continuation]\label{def continuation}
A set $X \subset M$ of $f$ has {\it continuation} in $\cD^r$ if there
exist an open neighborhood $\cU$ of $f$ in $\cD^r$ and a continuous map $\Phi:\cU \to \mathcal{P}(M)$ such that $\Phi(f)=X$, where $\mathcal{P}(M)$ is the space of all subsets of $M$ with the Hausdorff
topology.
Then, $\Phi(g)$   is called  the \textit{continuation} of $X$ for $g$.

\end{definition} 

\begin{remark}
Note that it is not assumed that the continuation is neither homeomorphic to  the initial set nor invariant.  Compare with Definition \ref{def strong continuation}. 
\end{remark}

\begin{definition}[exceptional set]\label{def except} 
Let $\Lambda$ be a partially hyperbolic set. We say that $X$ is an {\it exceptional subset} of $\Lambda$ if $X\subset\Lambda$ and for any central leaf $L$ of  $\Lambda$, 
the closure of $X\cap L$ in $L$ has zero Lebesgue measure in $L$.  
\end{definition}

\begin{definition}[large set]\label{def large} 
We say that a set $X$ contained in $\Lambda$ is large inside $\Lambda$  if the Hausdorff distance of $\Lambda$ and the interior of $X$ in $\Lambda$  is small. 
\end{definition}

\begin{definition}[compact robustly transitive set]\label{def RT1}
A compact set $Y \subset M$ is $\cD^r$ \textit{robustly (strictly) transitive} for $f\in\cD^r$, if for any $g \in \cD^r$ sufficiently close to $f$, the continuation  of $Y$ does exist and it is (strictly) transitive for $g$. In the same way one may define robustly (strictly) topologically mixing. 
  \end{definition} 

\begin{definition}[non-compact robustly transitive set]\label{def RT}
If $Y$ is not compact, then $Y$ is called $\cD^r$ \textit{robustly (strictly) 
transitive} if  it is the union of an increasing  sequence of compact  $\cD^r$ robustly
(strictly) transitive sets.
In the same way one may define robustly (strictly) topologically mixing for non-compact sets. 
\end{definition}

A periodic point $p$ of $f$ of period $n$ is called \textit{quasi-elliptic} 
if $D_pf^n$ has a non-real eigenvalue of norm one, and all eigenvalues of norm one are non-real. 
If in addition all eigenvalues have norm one then it is called  {\it elliptic}.

A point  $x$ is {\it non-wandering} for a diffeomorphism $f$ if for any neighborhood $U$ of $x$ there is $n\in \NN$ such that $f^{n}(U)\cap U \neq \vazio$. By $\Omega(f)$ we denote the set of all non-wandering point of $f$. A point $x$ is called (positively) {\it recurrent} for $f$ if
$ \liminf_{n  \rightarrow  +\infty} {\rm dist}(x, f^{n}(x))=0.$
A diffeomorphism is said \textit{recurrent} if Lebesgue almost all points are recurrent.

\noindent {\bf Notations.}  
Throughout this paper sometimes we use the following notation:
\begin{itemize}
\item [-]  $X\subset\subset Y$ means that $\overline{X}$ is a compact subset of the interior of $Y$;  
\item [-]  $\UU^{n}:=(0,1)^{n}$ and $\DD^n$ is the unit open ball in $\RR^n$.
\item [-]  $\TT^n$ is the $n$-dimensional torus (with the standard metric).
\end{itemize}

Let us recall some basic facts and definitions of symplectic topology.
A symplectic manifold is a $C^{\infty}$ smooth boundaryless manifold $M$
together with a closed non-degenerate differential 2-form $\omega$. We denote it
by $(M,\omega)$, but sometimes we just write $M$.
Examples of symplectic manifolds are orientable surfaces, even dimensional tori and cylinders, and the cotangent bundle $T^*N$ of an arbitrary smooth manifold. 
A $C^1$ diffeomorphism $f$ is \textit{symplectic} if $f$ preserve $\omega$; i.e.
$f^{*}\omega=\omega$. We denote by $\Diff^r_{\omega}(M)$  the space of $C^r$
symplectic diffeomorphisms of $M$ with the $C^r$ topology, $1\leq r\leq \infty$.
If the symplectic form $\w$ is exact, that is, $\w=d\alpha$ for some 1-form
$\alpha$, and $f^{*}\alpha - \alpha=dS$ for some smooth function $S:M\to \RR$,
then we say that $f$ is an exact symplectic diffeomorphism.

Let $(M, \w)$ be a symplectic manifold and let
$H:\RR\x M \to \RR$ be a $C^r$ function  called the (time dependent) Hamiltonian. For any $t\in \RR$,  the vector field $X_{H_{t}}$ determined by the condition 
$$\w (X_{H_t}, Y)=dH_t(Y) {\rm ~ or ~ equivalently~} i_{X_{H_t}}\w=dH_t$$
is called the {\it Hamiltonian vector field} associated with $H_{t}:=H(t, \cdot)$ or the symplectic gradient of $H_{t}$. 
The Hamiltonian $H$ is called time periodic if $H_{t}=H_{t+T}$ for some $T>0$.
A diffeomorphism is called {\it Hamiltonian diffeomorphism} if it is the time-one map of some time periodic Hamiltonian flow. 

An orbit is called quasi periodic if its closure, say $\cT$, is diffeomorphic to a torus and the dynamics on $\cT$ is conjugate to an irrational rotation on the torus. 

A Hamiltonian on a $2n$-dimensional manifold is called {\it completely integrable} if it has $n$ independent integrals in involution. Recall that an integral is a smooth real function on $N$ (or $N\x \RR$ in the case of time dependent Hamiltonian) which is constant along the orbits of the Hamiltonian flow. 

A Hamiltonian is called {\it integrable} if it is locally completely integrable. A diffeomorphism is called integrable if it is the time-$T$ map (for $T>0$) of some integrable Hamiltonian flow. See  Section \ref{s ifs rec} for more details and precise definitions.  

Throughout this paper all diffeomorphisms and Hamiltonians on non compact manifolds are assumed to have finite norm in their corresponding $C^r$ topology.

\subsection{Robustly mixing sets for symplectic diffeomorphisms}\label{s rob}
We now state  our main result concerning robust mixing  in the context of symplectic diffeomorphisms. Roughly speaking, next theorem states that if the product of a hyperbolic basic set
 with  any non-wandering dynamics is partially hyperbolic, then we can perturb
the initial system  in such a way that a {\it large} robustly topological mixing set is obtained.

\begin{maintheorem} \label{thm A}
Let $M$ and $N$ be two symplectic manifolds so that $N$ is compact, and $1\leq r\leq \infty$. Let $F=f_1\x f_2$  where $f_1\in  \diff^r_\w(M)$, $f_2\in  \diff^r_\w(N)$. Let $V\subset M$ be open and $\Gamma=\Lambda\x N$ such that

{\rm(a)}  $\Lambda=\bigcap_{n\in \ZZ} f^n_1(V)$ is a topologically mixing non-trivial hyperbolic set, 

{\rm(b)} $f_2$ is
dominated by $f_1\vert_{\Lambda}$, 

{\rm(c)}   $f_2$ has a $\delta$-weak hyperbolic periodic point, and $\delta>0$ is small enough. 

\noindent  Then, 
there exists a connected open set  $~\cU\subset  \diff^r_\w(M\x N)$ and a periodic point $P\in \Lambda\x N$ of $F$ such that 
\begin{enumerate}
\item $F$  is contained in the $C^r$ closure of $\cU$.
\item  For any $G\in\cU$, the set $\Gamma_G:=H(P_G;G)\bigcap_{n\in\ZZ} G^n(V\x N)$  verifies
\begin{itemize}
\item [(2.1)] $\Gamma_G$ is a (partially hyperbolic) robustly topologically  mixing set. 
\item [(2.2)]  $\Gamma_G$ tends to $\Gamma$ in the Hausdorff topology as G tends to F.
\end{itemize}
\end{enumerate}

\end{maintheorem}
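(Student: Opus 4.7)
The plan is to construct the open set $\cU$ by making a $C^r$-small symplectic perturbation of $F$ localized near a well-chosen periodic orbit, in such a way that this perturbation creates a \emph{symplectic blender} (the tool announced in the paper and described in Section 1.5) inside the partially hyperbolic set $\Gamma = \Lambda \times N$. The set $\cU$ will then be a $C^r$ neighborhood of this perturbed system; since the perturbation can be taken arbitrarily small in $C^r$, we automatically have $F \in \overline{\cU}$, even though $F$ itself (a pure product) will not lie in $\cU$.

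First, I pick a periodic point $p \in \Lambda$ of $f_1$ together with the given $\delta$-weak hyperbolic periodic point $q$ of $f_2$ from hypothesis (c); then $P := (p,q)$ is a periodic point of $F$ whose splitting at $P$ is $E^u_p \oplus T_q N \oplus E^s_p$, with $T_q N$ playing the role of the center bundle. Hypothesis (b) ensures that this local picture extends to a genuine partially hyperbolic splitting on all of $\Gamma$, and the $\delta$-weak hyperbolicity of $q$ provides very mild expansion/contraction in the center direction, which is exactly what a blender construction needs in order to accommodate a covering family of center holonomies.

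Second, I perform a $C^r$-small, compactly supported symplectic perturbation of $F$ near the orbit of $P$, for instance by composing with the time-one map of a Hamiltonian vector field supported in a thin tubular neighborhood, obtaining a map $G_0$ together with a symplectic blender containing $P_{G_0}$. By design the blender is $C^r$-robust, yielding an open neighborhood $\cU \subset \Diff^r_\omega(M \times N)$ of $G_0$. For every $G \in \cU$, define $\Gamma_G = H(P_G;G) \cap \bigcap_{n \in \ZZ} G^n(V \times N)$. The blender provides two essential structures: a persistent hyperbolic basic set inherited from the continuation $\Lambda_G$ (so $\Gamma_G$ is partially hyperbolic), and an iterated function system of local center-holonomy maps acting on $N$ which is \emph{robustly minimal} as a symplectic IFS (cf. Section 1.6). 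Combining these with the topological mixing of $\Lambda$ for $f_1$, which persists since $\Lambda$ is hyperbolic, a standard shadowing / ``go up and down'' argument along center-unstable and center-stable plaques shows that for any two open subsets $U_1, U_2$ of $\Gamma_G$ and any sufficiently large $n$, one can first iterate $U_1$ forward to cover a uniform-size unstable plaque (by hyperbolic mixing), then spread the resulting center plaque across $N$ in bounded time using the robust minimality of the IFS, and finally match it with $U_2$; this is precisely robust topological mixing of $\Gamma_G$.

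Finally, the Hausdorff convergence $\Gamma_G \to \Gamma$ as $G \to F$ follows from the stability of the hyperbolic set $\Lambda$ combined with the fact that the blender's horseshoe and the associated homoclinic class $H(P_G;G)$ become denser inside $\Lambda \times N$ as the size of the perturbation shrinks; robust minimality of the IFS on $N$ is what forces density in the center direction. The main obstacle will be the symplectic blender construction itself: one must engineer a $C^r$-small, compactly supported, \emph{symplectic} perturbation whose resulting IFS of center holonomies on $N$ is robustly minimal inside the symplectic category, a strictly harder task than in the dissipative Bonatti--D\'iaz setting because every holonomy map must be symplectic and the global perturbation must preserve $\w$. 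The weak-hyperbolicity hypothesis (c) is used precisely to have enough slack in the center direction to realize such a symplectic IFS through arbitrarily small perturbations.
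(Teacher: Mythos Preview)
Your proposal has the right ingredients but conflates two mechanisms that the paper keeps carefully separate, and this leads to a genuine gap.

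The symplectic blender you build near $P=(p,q)$ is a \emph{local} object: it lives in a small box $\cB \subset V\times U$ with $U$ a tiny neighborhood of $q$ in $N$, and its defining property is that any $s$-strip in $\cB$ meets $W^u(P_G)$ and any $u$-strip meets $W^s(P_G)$. The contracting/expanding IFS that produces this property acts only on a small disk around $q$; it does \emph{not} act on all of $N$ and is not what makes $\Gamma_G$ large. In the paper a \emph{second, independent} family of perturbations, supported on different Markov rectangles of $\Lambda$, is introduced so that the induced center holonomies realise an IFS $\cG(T_1,T_2,T_3)$ of \emph{recurrent} symplectic maps close to $f_2$ on the whole of $N$ (Theorem~\ref{thm rec}). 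This second IFS is what forces the strong stable and unstable leaves of an open dense set of points in $\Lambda\times N$ to hit the blender box $\cB$; only then do Lemmas~\ref{lem blender ph 1}--\ref{lem blender ph 2} place those points inside $H(P_G;G)$.

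Your sentence ``an iterated function system of local center-holonomy maps acting on $N$ which is robustly minimal as a symplectic IFS'' is where the argument breaks. Under the hypotheses of Theorem~\ref{thm A} (general recurrent $f_2$) the global center IFS is only \emph{transitive}, not minimal, and it is \emph{not} robustly so: perturbing $G$ destroys the specific holonomy identifications. Robustness in the paper comes entirely from the blender's open intersection property, not from any robustness of the IFS. Concretely, for the unperturbed $F_\mu$ one shows (non-robustly) that strong leaves of a dense open set reach $\cB$; for nearby $G$ one uses that the set of points whose strong leaves of diameter $L$ hit the blender is open and varies continuously, which is a compactness/continuity argument, not a minimality argument. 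Your ``spread the center plaque across $N$ in bounded time using robust minimality'' step therefore has no support. (Minimality of the center IFS is available only when $f_2$ is integrable---that is Theorem~\ref{thm B}, via Theorem~\ref{thm ifs min}---and even there it is not claimed to be robust.)

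A minor additional point: taking $\cU$ to be a single neighborhood of a single $G_0$ does not give $F\in\overline{\cU}$. You need the one-parameter family $F_\mu$ of perturbations and set $\cU=\bigcup_{\mu>0}\cU_\mu$, as the paper does; this is also what makes $\cU$ connected.
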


 Roughly speaking, the item (2.2) shows that the robustly transitive set $\Gamma_G$ is large along $N$, since it is close in the Hausdorff topology to $\Lambda\x N$. 
Observe that for the initial system $F$, the homoclinic class of $P$ could be small along $N$. In fact, it could hold that $\Lambda \x N$ is non-transitive and  $H(P,F) \cap (\Lambda \x N)= \Lambda \x \{q\}$, for some $q\in N$.

Related to the problem of instability, note  that for the symplectic maps in the thesis of Theorem \ref{thm A}, trajectories do not only ``drift'' along the $N$ but belong to a large transitive set, and one obtains robustly transitive sets with ``large variation of action" (see also Theorems \ref{thm C} and \ref{thm D}).

The proof of Theorem \ref{thm A} follows essentially from  Theorem \ref{thm A'} which could be considered as a parametric version of Theorem \ref{thm A}. This parametric version gives a better description of the open set $\cU\subset\diff^r_\w(M\x N)$ involved in the thesis of Theorem \ref{thm A}.
Moreover, Theorem \ref{thm A'} considers the case where the manifold $N$ is not compact. 

\begin{remark}\label{rem A generic} From a set of results proved in the $C^1$ topology (\cite{bdp, dpu, ht, sx}) and  the theorem  by Zehnder and Newhouse (about transversal homoclinic points in the presence of non hyperbolic periodic points, cf.  Theorem \ref{thm ZN}), we conjecture that the hypothesis of Theorem \ref{thm A} are optimal. For more details see Section \ref{s hypo thm A}.
\end{remark}

\begin{definition}
A diffeomorphism satisfies the {\textsf{H.S.}} condition if it has a nontrivial transitive hyperbolic invariant set.  
A diffeomorphism $f$  satisfies the {\textsf{A.H.}} condition
 if any neighborhood of $f$  (in the corresponding space with the $C^r$ topology) contains a diffeomorphism satisfying {\textsf{H.S.}} condition. 
In other words, the $C^r$ closure of the set of diffeomorphisms satisfying the {\textsf{H.S.}} condition coincides with the set of diffeomorphisms satisfying the {\textsf{A.H.}} condition. 
 Similar notions will be used for the Hamiltonian systems.
\end{definition}

If the diffeomorphism $f_2$ in Theorem \ref{thm A} is integrable and $N$ is compact then we can prove that the (strong) continuation of $\Gamma$ (which is a large subset) is in fact robustly transitive. So, we have the following. 

\begin{maintheorem} \label{thm B}
Let $r\geq 1,\;$ $f_1\in  \diff^r_\w(M)$ satisfying the {\textsf{H.S.}} condition  and $f_2\in \diff^r_\w(N)$ be an  integrable
diffeomorphism on a compact boundaryless symplectic manifold  $N$.   
Then $f_1 \x f_2$ is in the $C^\infty$ closure of an open set $\cU\in  \diff^r_\w(M\x N)$ such that any $F\in \cU$ has a robustly  (strictly) transitive set  whose projection on $N$ is onto. 
\end{maintheorem}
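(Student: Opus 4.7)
My plan is to reduce Theorem~\ref{thm B} to an application of Theorem~\ref{thm A} by first perturbing the integrable factor $f_2$ so as to install weakly hyperbolic periodic orbits, and then upgrading the resulting robustly mixing set to one whose projection onto $N$ is surjective. Since $f_1$ satisfies the \HC, it has a nontrivial transitive hyperbolic invariant set; passing to an iterate of $f_1$ and to a sub-piece if necessary, I may assume the existence of a topologically mixing hyperbolic basic set $\Lambda\subset M$ with an isolating open neighborhood $V$ such that $\Lambda=\bigcap_{n\in\ZZ}f_1^n(V)$. This takes care of hypothesis (a) of Theorem~\ref{thm A}.

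Since $f_2$ is integrable on the compact symplectic manifold $N$, the manifold $N$ is foliated by $f_2$-invariant Lagrangian tori on which the dynamics is conjugate to translations. Near any resonant torus, a $C^\infty$-small symplectic perturbation supported in a tubular neighborhood of the torus---built from the Birkhoff normal form and a generating function---creates a hyperbolic/elliptic pair of periodic orbits whose Floquet multipliers lie arbitrarily close to the unit circle; in particular the hyperbolic orbit can be arranged to be $\delta$-weak for any prescribed $\delta>0$. Choosing $\delta$ small relative to the fixed hyperbolicity rates of $f_1|_\Lambda$ ensures that the perturbed map $\tilde f_2$ is dominated by $f_1|_\Lambda$. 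The triple $(f_1,\tilde f_2,V)$ then satisfies hypotheses (a)--(c) of Theorem~\ref{thm A}, producing an open set $\cU_{\tilde f_2}\subset\diff^r_\w(M\x N)$ with $f_1\x\tilde f_2$ in its $C^r$-closure and every $G\in\cU_{\tilde f_2}$ having a robustly topologically mixing set $\Gamma_G$ Hausdorff-close to $\Lambda\x N$. Letting $\tilde f_2$ vary along a $C^\infty$-approximating sequence for $f_2$ and taking $\cU=\bigcup_{\tilde f_2}\cU_{\tilde f_2}$, I place $f_1\x f_2$ in the $C^\infty$-closure of $\cU$, and each element of $\cU$ is robustly strictly transitive on $\Gamma_G$.

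It remains to upgrade ``$\pi_N(\Gamma_G)$ Hausdorff-close to $N$'' to ``$\pi_N(\Gamma_G)=N$''. Here both the integrability of $f_2$ and the compactness of $N$ are essential. I install weakly hyperbolic periodic orbits \emph{simultaneously} on a finite collection of resonant tori whose union is $\veps$-dense in $N$; compactness guarantees that finitely many suffice, and the supports can be chosen pairwise disjoint so that the total perturbation remains $C^\infty$-small. The resulting $\Gamma_G$ then contains the continuations of all these orbits, so its projection on $N$ is closed, $G$-invariant, and $\veps$-dense for every $\veps>0$, hence equals $N$.

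The hard part is the last step: ensuring that these many local installations produce a \emph{single} robustly transitive homoclinic class rather than a disjoint union of several. This is the role of the symplectic blender introduced in Section~\ref{s intro blender} together with the minimal iterated function systems viewpoint of Section~\ref{s intro ifs}: the blender supplies the $C^1$-robust heteroclinic intersections needed to fuse the local homoclinic classes attached to different resonant tori into one, making the construction compatible with the single-set conclusion of Theorem~\ref{thm A} and ensuring that the projection onto $N$ is genuinely surjective.
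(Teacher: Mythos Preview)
Your reduction to Theorem~\ref{thm A} is a reasonable starting point, and the creation of $\delta$-weak hyperbolic periodic orbits by perturbing $f_2$ near resonant tori is fine. But the argument for surjectivity of $\pi_N(\Gamma_G)$ has a genuine gap. For a \emph{fixed} perturbation $\tilde f_2$ you install hyperbolic orbits on a finite, $\veps$-dense family of tori for one fixed $\veps>0$; the resulting $\Gamma_G$ then contains the continuations of finitely many periodic orbits whose $N$-projections are $\veps$-dense for \emph{that} $\veps$ only, not for every $\veps$. Closed plus $\veps$-dense does not give all of $N$, and your phrase ``$G$-invariant'' for a subset of $N$ has no meaning once $G$ is no longer a product. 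Even granting that all the local blenders fuse into a single homoclinic class, nothing in your outline rules out an exceptional set in the central leaves in the sense of Definition~\ref{def except}: the homoclinic class could meet each fiber $\{x\}\times N$ in a set of measure zero and still be Hausdorff-close to $\Lambda\times N$.

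The paper's proof takes a different, more direct route that avoids this difficulty entirely. Only \emph{one} $\delta$-weak saddle is created in $\tilde f_2$, hence only \emph{one} symplectic blender. The surjectivity comes instead from replacing, inside the proof of Theorem~\ref{thm A'}, the transitive IFS of Theorem~\ref{thm rec} by the \emph{minimal} IFS of Theorem~\ref{thm ifs min}, which applies precisely because $\tilde f_2$ is integrable. Minimality forces $\cR_\mu=N$ in Proposition~\ref{proof minimal}: every point of $\Lambda\times N$ has its strong stable and unstable leaves connected to the single blender. Compactness of $N$ then yields a uniform diameter bound $L_0$, so $\Gamma_{F_\mu,L_0}=\Lambda\times N$ and the \emph{strong} continuation $\Gamma_G$ for nearby $G$ is homeomorphic to $\Lambda\times N$; surjectivity of $\pi_N$ is then automatic. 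In short: the integrability hypothesis is used not to place many blenders but to upgrade the IFS from transitive to minimal, thereby killing the exceptional set and making the continuation a genuine homeomorphic copy of $\Lambda\times N$.
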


Observe that in the thesis of Theorem \ref{thm A} we do not state that the robustly transitive set projects onto $N;$ actually, exceptional set outside the homoclinic class could appear. This is not the case in Theorem \ref{thm B}. This stronger conclusion follows from the fact  that the diffeomorphism $f_2$ in the hypothesis of Theorem \ref{thm B} is integrable, which is not the case for Theorem \ref{thm A}.  To highlight the differences, compare 
Theorem \ref{thm rec} and Theorem \ref{thm ifs min}. On the other hand, observe that in Theorem \ref{thm B} the hypotheses on $f_1$ are weaker than the ones in Theorem \ref{thm A}.

\begin{remark}\label{rem A.H.} To highlight the broad range of application of Theorem \ref{thm B}
observe that the following systems satisfy the {\textsf{H.S.}} condition:
\begin{itemize}
\item [-] any $C^r$ generic perturbation of a symplectic diffeomorphism with an elliptic or quasi-elliptic periodic point,
\item   [-] any $C^r$ generic perturbation of an integrable system (i.e. generic nearly integrable diffeomorphism),
\item  [-]  any diffeomorphism in a $C^1$ open dense subset of $\diff^1_\w(M)$,
\item  [-] any $C^{r}$ surfaces diffeomorphism with positive entropy, if $r>1$. 
\end{itemize}
These are consequences of the results in \cite{ne} (cf. Theorem \ref{thm ZN}) and \cite{kat}. In fact, it is expected that the {\textsf{A.H.}} condition holds for any diffeomorphism in $\diff^r_\w(M)$. See also Section \ref{s hypo thm A}.
\end{remark}

We would like to observe that in  all our  main results the robustly transitive sets are in fact robustly topological mixing for some power of the diffeomorphism.

\subsection{Robust transitivity and instability of nearly integrable Hamiltonian systems}\label{s robHam}

Theorems \ref{thm A} and \ref{thm B} could be stated in the context of  exact symplectic and Hamiltonian
diffeomorphisms, and also time-dependent Hamiltonians. 
The following theorem concerns with the 
class of  Hamiltonian  systems that contains the so-called {\it a priori} unstable integrable Hamiltonian systems.
This theorem can be considered as a Hamiltonian version of Theorem \ref{thm B}.  

\begin{maintheorem} \label{thm C}
Let  $H_{0}(p, q, x,y, t) = h_{1} (x,y,t) +  h_{2} (p)$ be a time-periodic Hamiltonian,
where $t\in \TT:= \RR/\ZZ$ is the time, $(p,q) \in \aA^{2m}=\DD^{m}\x\TT^{m}, ~ and ~ (x,y )\in  \aA^{2n}=\DD^{n} \x \TT^{n}$.
Suppose that $h_{1}$ satisfies the {\textsf{A.H.}} condition and $h_{2}(p)=p\cdot {}^\mathrm{t}p$ (where $ {}^\mathrm{t}p$ means transpose of $p$).
Then for any compact set $K\subset \aA^{2m}$,  there exists a  $C^{r}$ ($r= 2, \dots, \infty$) open set $\cH$ of time periodic Hamiltonians  that contains $H_0$ in its $C^{\infty}$ closure and any Hamiltonian  $H\in \cH$ exhibits a robustly  transitive invariant set  such that its  projection on $\aA^{2m}$ contains  $K$.
\end{maintheorem}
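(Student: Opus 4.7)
The strategy is to reduce Theorem \ref{thm C} to a noncompact variant of Theorem \ref{thm B} applied to the time-one map, and then lift the resulting open set of symplectic perturbations back to Hamiltonians. Since $h_1$ depends only on $(x,y,t)$ and $h_2$ only on $p$, the vector field $X_{H_0}$ decouples and its time-one map factors as $F_0 = f_1 \times f_2$, where $f_i$ is the time-one map of $h_i$. A direct computation with $h_2(p) = p \cdot {}^{\mathrm{t}}p$ gives $f_2(q,p) = (q+2p, p)$, an integrable shear on $\aA^{2m}$ that preserves every fiber $\TT^m \times \{p_0\}$ and has all Lyapunov exponents equal to zero.

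First, use the \textsf{A.H.} hypothesis on $h_1$ to find a $C^\infty$-small time-periodic Hamiltonian perturbation $h_1'$ of $h_1$ whose time-one map $f_1'$ satisfies the \textsf{H.S.} condition, and let $\Lambda$ be a nontrivial transitive hyperbolic invariant set of $f_1'$. Then $H_0' := h_1' + h_2$ is $C^\infty$-close to $H_0$, with time-one map $F_0' = f_1' \times f_2$. Next, fix a compact $f_2$-invariant neighborhood $\widehat K = \overline B \times \TT^m \subset \aA^{2m}$ of $K$, with $B$ an open, relatively compact subset of $\DD^m$ containing the projection of $K$ to the action coordinates. Verify that $F_0'|_{\Lambda \times \widehat K}$ is partially hyperbolic with $E^c = T\widehat K$: since $Df_2$ is a bounded unipotent shear, $\|Df_2^n\|$ grows only polynomially in $n$, while $\|Df_1'^n|_{E^s}\|$ and $m(Df_1'^n|_{E^u})$ are exponentially small and large respectively, so choosing $n_0$ sufficiently large yields domination of $f_2|_{\widehat K}$ by $f_1'|_\Lambda$.

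Now apply the parametric / noncompact version of Theorem \ref{thm B} (foreshadowed by Theorem A$'$ in the remark after Theorem \ref{thm A}) to produce a connected $C^r$-open set $\cU \subset \diff^r_\w(\aA^{2n} \times \aA^{2m})$ having $F_0'$ in its $C^\infty$-closure and such that every $G \in \cU$ admits a robustly transitive invariant set whose projection onto $\aA^{2m}$ contains $\widehat K \supset K$. Finally, pull $\cU$ back to Hamiltonians: every symplectic diffeomorphism of $\aA^{2n} \times \aA^{2m}$ sufficiently $C^r$-close to $F_0'$ arises as the time-one map of some time-periodic Hamiltonian $C^r$-close to $H_0'$ by the standard Hamiltonian-suspension and generating-function correspondence, so the set $\cH$ of $C^r$ time-periodic Hamiltonians whose time-one map lies in $\cU$ is $C^r$-open, contains $H_0$ in its $C^\infty$-closure, and inherits the prescribed robustly transitive invariant set.

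The main obstacle is the last step: the perturbations that place a symplectic diffeomorphism into $\cU$ must be realizable by time-periodic Hamiltonian perturbations $C^r$-close to $H_0$ while preserving the robustly transitive set. This rests on a localized Hamiltonian suspension together with $C^r$-continuity of the flow-to-time-one map correspondence, but requires care in the noncompact phase space $\aA^{2n} \times \aA^{2m}$ so that the supports of the Hamiltonian perturbations do not reach the boundary of $\DD^m$ and so that time-periodicity is preserved throughout.
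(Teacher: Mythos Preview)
Your overall architecture---pass to the time-one map, invoke a noncompact variant of Theorem~\ref{thm B}, and then transport the conclusion back to Hamiltonians---matches the paper's proof. The verification of domination via the polynomial growth of the shear $f_2(q,p)=(q+2p,p)$ is correct, and restricting to a compact $\widehat K$ to handle noncompactness is exactly what the paper does.

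The genuine gap is in your final step. The assertion that ``every symplectic diffeomorphism of $\aA^{2n}\times\aA^{2m}$ sufficiently $C^r$-close to $F_0'$ arises as the time-one map of some time-periodic Hamiltonian'' is false on these manifolds: on $\aA^{2m}=\DD^m\times\TT^m$ there is a flux homomorphism, and for instance the symplectic map $(q,p)\mapsto(q,p+\epsilon v)$ for a nonzero constant $v$ is arbitrarily $C^\infty$-close to the identity but is not Hamiltonian-isotopic to it, hence not the time-one map of any time-periodic Hamiltonian. No general ``suspension and generating-function correspondence'' produces Hamiltonians here; generating functions produce symplectic maps, not Hamiltonian flows.

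The paper avoids this obstacle entirely by observing that the specific perturbations used in the proofs of Theorems~\ref{thm A'} and~\ref{thm B} are already Hamiltonian: the arc $F_\mu=\Psi^{\varepsilon(\mu)}\circ F_0\circ\Phi^{-\varepsilon(\mu)}$ is built from time-$t$ maps of explicit Hamiltonian flows (the bump-function Hamiltonians $\tilde h_i$), and the preliminary perturbation of $h_2$ creating the $\delta$-weak hyperbolic point is also done at the Hamiltonian level. Hence the entire construction of the arc $\{F_\mu\}$ and its neighborhood can be carried out inside the space of time-periodic Hamiltonians from the start, and no lifting step is needed. Replace your last paragraph with this observation and the proof goes through.
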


This theorem is related to the results in \cite{dls}, \cite{cy} and \cite{x}, where the case $m=n=1$ and $h_{1}$ integrable (with a saddle loop)  were considered. By quite different methods (and under extra conditions)  it was shown that generic time-periodic perturbations of $H_0$ exhibit instability along the variable $p$ (i.e. along the interval $\DD^{m}, (m=1)$). 

Here, $H_0$ is relaxed from such conditions and it is shown that $H_0$ is in the closure of some open set $\cH$ of Hamiltonians, in which all systems exhibit robustly transitive (or topological mixing) sets which contains arbitrary large regions of $\aA^{2m}$. In particular, a very strong form of instability  is present for all systems in $\cH$.

\begin{maintheorem} \label{thm D}
Let  $h_1$ and $h_2$ be two time periodic Hamiltonians acting on manifolds $M$ and $N$ respectively, both with finite volume. Assume that $h_1$ satisfies
 the  {\textsf{H.S.}} condition and $h_2$ has an elliptic periodic point. Let $H_{\veps} = h_{1}  + \veps h_2$. 
 
 Then, for a sufficiently small $\veps>0$ and  any $\alpha>0$,   there exists a  $C^{r}$ open set $\cV$ of time periodic Hamiltonians on $M\x N$  that contains $H_\veps$ in its $C^{\infty}$ closure and any Hamiltonian  $H\in \cV$ exhibits an invariant set $\Upsilon$ such that
\begin{enumerate}
\item  the projection of $\Upsilon$  on  $N$ has volume $> {\rm vol}(N)-\alpha$,
\item $\Upsilon$ is robustly transitive.
\end{enumerate}
\end{maintheorem}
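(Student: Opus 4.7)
The plan is to view Theorem~D as a Hamiltonian counterpart of Theorem~B applied with quantitative control over the $N$-projection. Let $F_\veps$ denote the time-$1$ map of $H_\veps$. Because the Hamiltonian equations for $H_\veps = h_1 + \veps h_2$ decouple, $F_\veps = f_1\times f_{2,\veps}$ where $f_1 = \phi^1_{h_1}$ inherits the {\HC} from $h_1$ (hence, up to replacing $f_1$ by a power via the spectral decomposition, one obtains a topologically mixing hyperbolic basic set $\Lambda\subset M$), and $f_{2,\veps} = \phi^1_{\veps h_2}$ is a symplectic diffeomorphism of $N$ that is $C^r$-close to the identity for small $\veps$ and inherits an elliptic periodic orbit from~$h_2$. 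Because small $C^r$ Hamiltonian perturbations of $H_\veps$ induce small symplectic perturbations of $F_\veps$, and conversely any such symplectic perturbation supported in a compact region can be realized as a time-periodic Hamiltonian perturbation, one may work throughout in $\diff^r_\w(M\x N)$ and lift the conclusion back to the Hamiltonian setting at the end.

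The first main step is to turn the elliptic orbit of $f_{2,\veps}$ into the data needed by Theorem~A. By the Newhouse--Zehnder theorem (see Remark~\ref{rem A.H.}), any $C^r$ neighborhood of $f_{2,\veps}$ contains a symplectic $\tilde f_{2,\veps}$ possessing a transverse homoclinic point, hence a nontrivial hyperbolic basic set. Because $f_{2,\veps}$ is close to the identity when $\veps$ is small, the saddle produced has eigenvalues close to $1$, so it is $\delta$-weak hyperbolic with $\delta$ as small as desired, and the same smallness of $\|Df_{2,\veps}-\mathrm{Id}\|$ makes $\tilde f_{2,\veps}$ dominated by $f_1|_\Lambda$. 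Thus $\tilde F := f_1\x \tilde f_{2,\veps}$ verifies the hypotheses of Theorem~\ref{thm A} (and the non-compact parametric refinement, since $N$ need not be compact). This produces a connected open set $\cU\subset\diff^r_\w(M\x N)$ containing $\tilde F$ in its $C^\infty$ closure, and a periodic point $P$ whose homoclinic class $H(P_G;G)$ is robustly topologically mixing for every $G\in\cU$; translating back to Hamiltonians yields the open set $\cV$ with $H_\veps$ in its $C^\infty$ closure and a robustly transitive set $\Upsilon$ for each $H\in\cV$, proving item~(2).

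The remaining task, and the main obstacle, is to upgrade item~(2) to item~(1): that $\Upsilon$ projects onto a set of volume at least $\mathrm{vol}(N)-\alpha$ in $N$. The key observation is that because $N$ has finite symplectic volume and $f_{2,\veps}$ is volume-preserving, Poincar\'e recurrence guarantees that $f_{2,\veps}$ is a \emph{recurrent} symplectic diffeomorphism in the sense of Section~\ref{s pre}. One then invokes the recurrence-driven minimality mechanism for symplectic iterated function systems from Section~\ref{s intro ifs} (the ``recurrence theorem'' combined with minimality of the central IFS), which, coupled with the robustness of the symplectic blender produced in the previous step, ensures that the homoclinic class propagates the central disk of the blender throughout $N$ along recurrent returns. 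Quantitatively, given $\alpha>0$, Kac's lemma bounds the average return time in terms of the volume to be covered; one chooses the blender's central scale, and shrinks $\cV$ accordingly, so that the union of these returns covers all of $N$ except a set of volume less than $\alpha$. The delicate point is that this covering estimate must be $C^r$-open in $H$ rather than merely $C^r$-generic: this is precisely where robustness of blenders, together with the robust version of minimality for symplectic IFS, is needed, and it is what forces $\cV$ to depend on $\alpha$ (while $\veps$ may be chosen independently of $\alpha$, once small enough to guarantee the $\delta$-weak hyperbolicity and domination in Step~2).
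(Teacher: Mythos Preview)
Your first two paragraphs are essentially on track and match the paper: decouple $F_\veps=f_1\times f_{2,\veps}$, use smallness of $\veps$ for domination, invoke Newhouse--Zehnder near the elliptic orbit to obtain a $\delta$-weak hyperbolic saddle, and feed this into Theorem~\ref{thm A} (more precisely, into Theorem~\ref{thm A'}). The opening phrase ``Hamiltonian counterpart of Theorem~B'' is misleading since $h_2$ is not assumed integrable, but you correct course and invoke Theorem~A, which is the right tool.

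The genuine gap is in your last paragraph, the volume estimate for item~(1). The Kac's-lemma argument does not do what you need: Kac bounds the \emph{mean} return time of $f_{2,\veps}$ to the blender's small central disk $D$, but the $f_{2,\veps}$-orbit of $D$ has no reason to cover nearly all of $N$, and ``minimality of the central IFS'' in the sense you invoke is not available here (that would require integrability, as in Theorem~\ref{thm ifs min}). What the paper actually uses is sharper and simpler. In the proof of Theorem~\ref{thm A'} (Proposition~\ref{proof minimal} and Remark~\ref{rmk Leb}) the set $\cR_\mu\subset N$ of points whose strong stable and unstable leaves hit the blender has \emph{full Lebesgue measure}, because it contains the set $R_\mu$ of points possessing an IFS-orbit (for the transitive IFS of Theorem~\ref{thm rec}) into $D$, and that set is of full measure by the argument of Proposition~\ref{pro ifs 3}. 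One then exhausts $\cR_\mu=\bigcup_{L>0}\cR_{\mu,L}$, where $\cR_{\mu,L}$ consists of points connected to the blender by strong leaves of diameter at most $L$; for $L$ large, ${\rm vol}(\cR_{\mu,L})>{\rm vol}(N)-\alpha/3$. Robustness is now immediate: for \emph{fixed} $L$, the condition ``$W^{ss}_L(x)\cap\cB\neq\varnothing$ and $W^{uu}_L(x)\cap\cB\neq\varnothing$'' is $C^1$-open in the diffeomorphism, and the normally hyperbolic leaf $\{\hat p\}\times N$ varies $C^r$-continuously (Theorem~\ref{thm hps}), so the continuation of $\cR_{\mu,L}$ still has volume larger than ${\rm vol}(N)-\alpha$. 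Taking $\Upsilon$ to be the continuation of $\Gamma_{F_\mu,L}$ then gives both conclusions. You should replace the Kac paragraph by this full-measure-plus-exhaustion-plus-continuity argument; your observation that $\veps$ can be fixed independently of $\alpha$ is correct and survives.
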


The type of systems that appear in Theorem \ref{thm D} resembles the sometimes called ``slow-fast  systems'' (see e.g. \cite{akn}). 

\subsection{Ergodic measures with large support}\label{s ergodic zero}
The existence of  ergodic measures with large supports is a very interesting phenomenon, specially for nearly integrable systems and for compact sets in the complement of KAM tori (recall that KAM tori are ergodic components). In this direction we have the following result.

\begin{maincorollary}\label{cor ergodic}
The robustly transitive (or topologically mixing) sets obtained in Theorems \ref{thm A}, \ref{thm B}, \ref{thm C}  and \ref{thm D}
are contained in the support of an ergodic measure.
\end{maincorollary}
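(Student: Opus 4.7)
The plan is to produce, for each $G$ in the open set furnished by the preceding theorems, a $G$-invariant ergodic measure $\mu_G$ with $\text{supp}(\mu_G)\supseteq\Gamma_G$. Since the four theorems all yield the same underlying structure --- namely that $\Gamma_G$ is the homoclinic class $H(P_G;G)$ of a hyperbolic periodic point $P_G$, with robust topological mixing and with a dense set of hyperbolic periodic points homoclinically related to $P_G$ (this is the very mechanism that the blender + IFS construction produces) --- I would treat them uniformly and deduce the Hamiltonian statements (Theorems C, D) from the symplectic ones via the standard suspension/time-one map correspondence.

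First, I would observe that an invariant measure $\nu$ on $\Gamma_G$ with $\text{supp}(\nu)=\Gamma_G$ exists by a soft argument: pick $x\in\Gamma_G$ with orbit dense in $\Gamma_G$ (which exists by transitivity) and take any weak-$*$ accumulation point of the empirical averages $\frac{1}{n}\sum_{k=0}^{n-1}\delta_{G^k(x)}$. The output is $G$-invariant with support equal to $\overline{\{G^k(x)\}}=\Gamma_G$. The real issue is upgrading from ``invariant with full support'' to ``ergodic with full support''.

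For this, I would invoke the Sigmund-type approximation scheme adapted to the partially hyperbolic setting. The key facts to verify are: (i) periodic measures are dense in the space $\mathcal{M}_{G}(\Gamma_G)$ of invariant probability measures on $\Gamma_G$, and (ii) any two hyperbolic periodic orbits in $\Gamma_G$ are homoclinically related, so that pairs of periodic measures can be joined by invariant measures with support equal to their orbits' union. Fact (i) in the partially hyperbolic context follows in our setting from the blender machinery: it equips $\Gamma_G$ with a horseshoe-like skeleton for which the shadowing lemma applies along the hyperbolic directions, and the IFS minimality (Section \ref{s intro ifs}) propagates the periodic orbits densely along the center direction. Fact (ii) is already built into the construction of $\Gamma_G$ as a homoclinic class. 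Granting (i) and (ii), a standard Baire category argument (Sigmund, then Abdenur--Bonatti--Crovisier, Gelfert--Kwietniak, etc.) shows that the subset
\[
\{\nu\in\mathcal{M}_G(\Gamma_G) : \nu \text{ is ergodic and }\text{supp}(\nu)=\Gamma_G\}
\]
is a residual (in particular nonempty) $G_\delta$ in $\mathcal{M}_G(\Gamma_G)$. Picking any $\mu_G$ in this set yields the required ergodic measure.

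The main obstacle, and the step where I would have to be careful, is establishing (i) in the symplectic partially hyperbolic setting. Unlike the purely hyperbolic case, shadowing is only guaranteed transversally to the center, so naive pseudo-orbits in $\Gamma_G$ need not be shadowed by periodic orbits. The remedy is to exploit exactly what the paper's construction provides: periodic pseudo-orbits arising from finite codings in the symbolic IFS model can be realized as genuine periodic orbits via the ``symplectic blender'', because the blender produces a robust mechanism to close up orbits whose itineraries are prescribed. This is precisely the local control that converts a topologically transitive partially hyperbolic homoclinic class into one satisfying a weak specification property sufficient for Sigmund's argument. Once this weak specification is in hand, the category argument is routine and the corollary follows for all four theorems simultaneously.
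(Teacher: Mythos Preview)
Your proposal reaches the right conclusion but takes an unnecessarily hard road, and the obstacle you flag is one you create for yourself. The paper's proof is two lines: the construction in Theorem~\ref{thm A'} already shows $\Gamma_G \subseteq H(P_G;G)$, and \cite[Theorem~3.1]{abc} states that \emph{any} homoclinic class is the support of some ergodic measure (with zero entropy). No further hypothesis is needed --- no specification, no density of periodic measures in $\mathcal{M}_G(\Gamma_G)$, no shadowing along the center.

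Your route instead aims for a Sigmund-type residual argument, for which you need (i) density of periodic measures in the full space of invariant measures on $\Gamma_G$. You correctly identify this as the delicate step in a partially hyperbolic setting, and then argue that the blender yields a weak specification property. That last claim is not justified: the blender guarantees robust intersections of strong stable and unstable manifolds, but it does not give a closing lemma along the center direction, and weak specification for $\Gamma_G$ is neither proved in the paper nor an obvious consequence of the construction. So your proof has a genuine gap at exactly the point you flag. The fix is not to work harder on (i) but to drop it: the Abdenur--Bonatti--Crovisier construction builds the ergodic measure directly from a nested sequence of homoclinically related periodic orbits whose orbits fill out $H(P_G;G)$, and needs only the homoclinic class structure --- which you already have by the definition of $\Gamma_G$.
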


This is a direct consequence of our main theorems together with the results in  \cite[Theorem 3.1]{abc}, where a nice property of homoclinic classes was discovered:  {\it any homoclinic class is the support of an ergodic measure}. 
See also Section \ref{s ergodic} for more discussions about ergodicity.

\subsection{Hausdorff dimension of the transitive sets and ergodic measures}\label{s haus}
It is clear from the statement of Theorem \ref{thm A} (as well as from the other main theorems)  that $\dim_H(\Gamma)=\dim_H(\Lambda)+2n$, where $\dim_H$ is the Hausdorff dimension. As a matter of fact, the Hausdorff dimension of certain hyperbolic sets (e.g. some hyperbolic sets in the vicinity of elliptic points) 
may be arbitrarily close to the dimension of the ambient manifold. So, one may find examples for which  $\dim_H(\Gamma)$ is close to $2n+2m$. 
Consequently, in the context of Corollary \ref{cor ergodic}, there exist  ergodic measures with large (non-local) supports and with Hausdorff dimension arbitrarily close to the (full) dimension of the ambient manifold.

\subsection{A dichotomy: transitivity vs.  escaping to infinity}\label{s intro dichotomy}
Observe that once we remove the hypothesis on the compactness of $N$ (and hence let it  have infinite volume) in Theorem \ref{thm A} then a diffeomorphism $G$ near  $F$ may have wandering points in the continuation of $\Gamma$. However, the proof of Theorem \ref{thm A} leads to the following dichotomy: {\it  there exist either large robustly transitive sets or wandering orbits converging to infinity.}
See Section \ref{sec inst vs. rec} for the proof and more details.\\

In the next two subsections we explain the main tools involved in the proofs of the previous theorems. These tools, namely symplectic blenders and iterated functions systems,  could be interesting in themselves.

\subsection{Symplectic blender} \label{s intro blender}
A fundamental ingredient in the proofs is a new tool in symplectic dynamics called \textit{symplectic blender}, a semi-local source of robust transitivity. It is based on the seminal work of Bonatti and D\'iaz \cite{bd}.  
The symplectic blender provides \textit{robustness} of the density of the stable and unstable manifolds of a hyperbolic periodic point, in any compact region, which implies robustness of transitivity or even topological mixing. Theorem \ref{thm blender} is the main result in this direction.
 We believe that the construction of symplectic blenders presented in this paper could have more applications in symplectic dynamics. For more details, see the introduction of Section \ref{sec blender}.

\subsection{Iterated function systems}\label{s intro ifs}
By an iterated function system we mean the action of a semi-group generated by a (finite) family of diffeomorphisms. 
Another main ingredient in the proofs is that we reduce the problem of transitivity to  one about iterated
function systems. That is, the problems related to instabilities and (robust) transitivity can be formulated for iterated function systems. 
This approach is very convenient to deal with the \textit{whole} structure of homoclinic intersections associated to a normally hyperbolic submanifold. Moreover, iterated function systems are used to obtain symplectic blenders.  Consequently, part of this work is devoted to the study of certain iterated function systems.   The main novelties
in this part - beyond the above mentioned reduction - are the results on global and local dynamics of iterated function systems (cf. Theorems \ref{thm rec} and \ref{thm ifs min}). 
We believe this type of problems are in itself worth of study.

\subsection{Heuristic model}\label{s heuristic} As a basic model to understand the strategies of the proofs of Theorems \ref{thm A} and \ref{thm B}, one may consider perturbations of the product of a horseshoe and an integrable twist map, where applying  results on the \textit{iterated function systems} of recurrent diffeomorphisms lead to the \textit{minimality} of (strong) stable and unstable foliations (giving instability as well). Then, using  the \textit{symplectic blender} one can show that transitivity (or even topological mixing) appears  in a \textit{robust} fashion. 
Observe that the hypotheses of Theorem \ref{thm B} are in the  framework of the described strategy. Similar arguments are applicable in the more general context of Theorem \ref{thm A} that includes non-integrable systems.

Note specially that we do not use any KAM-type invariant sets in the proof.
Instead, \textit{recurrence} has an important role. Therefore, some difficulties such as appearance of large gaps between Diophantine tori (known as {\it the large gap problem}) are not present here. \bigskip


The paper is organized as  follows.
In Section \ref{sec IFS}, we study transitivity of two different kinds of
iterated function systems (IFS). Namely, the IFSs of expanding maps, and the
IFSs of recurrent diffeomorphisms.  In certain cases, the minimality of the IFSs is obtained, which is essential in the proof of Theorem \ref{thm B}. In Section \ref{sec blender},  we introduce the
symplectic blenders and we use the expanding IFSs to build them.
In Section \ref{sec proof A} we prove a parametric version of Theorem  
\ref{thm A}. We use this theorem and its proof in Section \ref{sec instability} where we prove our main theorems and also the dichotomy stated in  Section \ref{s intro dichotomy}.
Finally, in Section \ref{sec remarks} 
several remarks
and open problems related to the main results are discussed.
In particular, in Section \ref{applications}, we discuss the possibility of extending the methods and results introduced here to many fruitful contexts, including non-artificial mechanical and geometric problems.\\

\textbf{Acknowledgment.}
The authors would like to thank Lorenzo D{\'\i}az, Vadim Kaloshin, Patrice Le Calvez, Rafael de la Llave, Mike Shub and Marcelo Viana for helpful conversations and comments.
M.N. would like to express his deepest thanks to Jacob Palis for enormous encouragement, and to Marcelo Viana for constant support.
This work was done while M.N. was at IMPA, and some parts were written while he was at IASBS in Zanjan.  M.N. was partially supported by IMPA and CNPq.

\section{Iterated function system} \label{sec IFS}

In this section we study the transitivity of some iterated function systems (IFS). 
Loosely speaking, in the IFS, instead of taking iterations by only \textit{one map}, all the possible compositions and iterations of \textit{several maps} are considered. As a consequence, a point $x$ may have an infinite number of orbit branches.
The transitivity of an iterated function system of expanding maps has a
fundamental role in the construction and 
properties of blenders (see Section \ref{sec blender}). 
And the  transitivity of an iterated
function system of  symplectic maps shall be used in the proof of the density of
the (strong) stable and unstable manifolds of partially hyperbolic skew products (see Section \ref{sec proof A}).

Let $g_1, g_2, \dots , g_n$ be  maps defined on a metric space $X$.
The  iterated function system  $\cG(g_1, g_2,$ $\dots,$ $g_n)$ is the action of the semi-group generated by $\{g_1, g_2,$ $\dots,$ $g_n\}$ on X.
We use the notion of multi-index $\sigma=(\sigma_{1}, \dots, \sigma_{k})\in \{1,2, \dots,n\}^{k}$ for $g_{\sigma}= g_{\sigma_{k}}\circ \cdots \circ g_{\sigma_{1}}$. We also denote the length of the multi-index $\sigma$ by $| \sigma |= k$. 

An orbit of $x\in X$ under the iterated function system  $\cG=\mathcal{G}(g_1, g_2, \dots, g_n)$ is a sequence $\{ g_{_{\Sigma_k}}(x)\}_{k=1}^{\infty}$ where $\Sigma_{k}=(\sigma_{1}, \dots, \sigma_{k})$ and $\{\sigma_{i}\}_{i=1}^{\infty} \in \{1,2, \dots,n\}^{\mathbb{N}} $. 

The $\cG$-orbit of  $x$ denoted by $\orbit_{\cG}^+(x)$ is the set of points lying on some orbit of $x\in X$ under the IFS  $\cG$. The $\cG$-orbit of  a subset  $U\subset X$  is defined as the union of all its orbits, i.e. $\orbit_{\cG}^+(U)= \bigcup_{x \in U} \orbit_{\cG}^+(x)$.

Similarly, we denote  $\orbit_{\cG}^-(x)$ as the set of points such that $x$ lies on (some of) their orbits. 

Results similar to the one proved here can be found in \cite{la}, \cite{mo}, \cite{thesis}, \cite{GHS} and \cite{kn}.

\begin{definition}
The IFS $\mathcal{G}(g_1, g_2, \dots, g_n)$ is said \textit{transitive} if the $\cG$-orbit of any open set is dense. A set $U$ is called transitive for $\cG$ if the $\cG$-orbit of any open subset of $U$ is dense in $U$. This is equivalent to the existence of some point with a dense $\cG$-orbit in $U$.
\end{definition}

\begin{remark}
In similar way one defines the IFS of maps $g_{i}:U_{i}\subset X \to X$. In this case, the possible compositions of $g_{i}$'s depend on each point:
$g_{i}(U_{i})$ is not necessarily a subset of $U_{j}$ and so
$g_{j}\circ g_{i}$ is only defined on $U_{i}\cap g^{-1}_{i}(U_{j})$.
\end{remark}

\subsection{IFS of contracting and expanding maps}\label{s ifs contract}
 In this section we study the transitivity of the iterated function systems of contracting and expanding maps. In Section \ref{s ifs contracting} we give the results for  contracting ones and the theorem for expanding are sketched in Section \ref{s ifs expanding} (the details are left to the reader). 

\subsubsection{IFS of contracting  maps}\label{s ifs contracting}

A map $\phi$ on a metric space $(X,d)$ is called {\it contracting} if   there is a constant 
$0<K<1$ such that $d (\phi(x), \phi(y))< K d(x,y)$, 
for all $x, y \in X$. 
The \textit{contraction bound}, if it exists, is a number $\lambda>0$ for which $\phi$  satisfies $\lambda d(x,y) < d (\phi(x), \phi(y))$ 
for all $x, y \in X$. 
This constant does not necessarily exist for any contracting map. 
For a generic smooth contracting map $\phi$ on $\RR^n$, the contraction bound does exist if we consider its restriction on a compact domain $U$. 
In this case, the constant is equal to $\inf\{m(D\phi_z) ~:~ z\in U \}$.

\begin{proposition} \label{pro ifs 1}
Let $U \subset \RR^n$ be an open disk containing $0$ and  $\phi: U \to U$ be a contracting map with the contraction bound $\lambda$ and $\phi(0)=0$. Then there exists $k \in \NN $ such that for any  $\veps > 0$ small there exist vectors $c_1, \dots , c_k \in B_{\veps}(0)$ and a number $\delta>0$ such that
$$B_{\delta}(0)\subset \overline{\orbit_{\cG}^+(0)},$$
where $\cG=\cG(\phi, \phi+c_1, \dots , \phi+c_k )$. 

Moreover, these properties are robust in the following sense:\\
Let $\phi_{0}=\phi$ and $\phi_{i}=\phi+c_{i}$. Let $\cU_{i}$ be a set of contracting maps $C^0$ close to $\phi_{i}$ such that their contraction bounds are also close to that of $\phi_i$.
Then the same is true if at each iteration in the $\cG$-orbit of $0$ one replaces the corresponding  $\phi_{i}$ by any $\tilde{\phi}_i\in \cU_{i}$.
\end{proposition}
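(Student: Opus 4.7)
The plan is to establish a \emph{covering condition} for the IFS near $0$ and then deduce density of the forward orbit by a straightforward contraction argument; the robustness will follow because both ingredients are quantitative and depend only on the Lipschitz and contraction-bound constants.

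Let $K \in (0,1)$ be the Lipschitz constant of $\phi$ and $\lambda$ its contraction bound. Then $\phi$ is bi-Lipschitz, hence injective and a local homeomorphism by invariance of domain; moreover, for any ball $B_\veps(0)$ with $\overline{B_\veps(0)} \subset U$, the image $\phi(B_\veps(0))$ contains $B_{\lambda\veps}(0)$. Indeed, $\phi^{-1}$ is defined on $\phi(U)$ with Lipschitz constant $1/\lambda$, so $\phi(\partial B_\veps(0))$ lies in $\{\|z\| \geq \lambda\veps\}$; the image $\phi(B_\veps(0))$ is open, contains $0$, and by connectedness contains all of $B_{\lambda\veps}(0)$.

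Now fix $\veps > 0$ small, take $c_0 := 0$ and $c_1, \dots, c_k \in B_\veps(0)$ forming a $\lambda\veps$-net of $B_\veps(0)$, and set $\phi_i := \phi + c_i$, $\delta := \veps$. The cardinality $k = k(\lambda, n)$ depends only on $\lambda$ and the ambient dimension $n$, since the net spacing scales linearly with $\veps$. Using $\phi_i(B_\veps(0)) = \phi(B_\veps(0)) + c_i \supset B_{\lambda\veps}(c_i)$,
$$ B_\delta(0) \;\subset\; \bigcup_{i=0}^k B_{\lambda\veps}(c_i) \;\subset\; \bigcup_{i=0}^k \phi_i(B_\delta(0)). $$
Iterating this covering: for any $y \in B_\delta(0)$ and any $N \geq 1$, there exist indices $i_1, \dots, i_N \in \{0,\dots,k\}$ and $y_N \in B_\delta(0)$ with $y = \phi_{i_1} \circ \cdots \circ \phi_{i_N}(y_N) = g_\sigma(y_N)$ for $\sigma_j := i_{N+1-j}$. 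Since $g_\sigma$ is $K^N$-Lipschitz,
$$ \|y - g_\sigma(0)\| \;=\; \|g_\sigma(y_N) - g_\sigma(0)\| \;\leq\; K^N \delta \;\longrightarrow\; 0 \quad (N \to \infty), $$
which gives $B_\delta(0) \subset \overline{\orbit_\cG^+(0)}$.

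Both inputs above -- the set-theoretic inclusion $\phi_i(B_\delta(0)) \supset B_{\lambda\veps}(c_i)$ and the composition bound $K^N$ -- depend only on the contraction bound and the Lipschitz constant of the map being applied, not on its specific form. Therefore, if the $\cU_i$ are taken small enough in $C^0$ distance with contraction bounds close enough to $\lambda$, the same argument applies when, at each iteration, $\phi_i$ is replaced by an arbitrary $\tilde\phi_i \in \cU_i$: one obtains a weakened covering $B_{\delta'}(0) \subset \bigcup_i \tilde\phi_i(B_{\delta'}(0))$ with a slightly smaller $\delta'$ and a Lipschitz bound $(K')^N$ with some $K' < 1$, yielding density of the nonstationary orbit in $B_{\delta'}(0)$. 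The principal obstacle I anticipate is verifying that this weakened covering survives uniformly over \emph{all} nonstationary sequences of replacements; this reduces to a finite condition since the index set has fixed cardinality $k+1$ and only the two constants $\lambda', K'$ enter the estimates, so one can choose each $\cU_i$ small enough for the covering to hold for every choice in $\prod_i \cU_i$ simultaneously.
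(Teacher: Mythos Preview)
Your proof is correct and takes a genuinely different route from the paper. The paper first isolates an auxiliary criterion (Proposition~\ref{pro ifs 2}) with \emph{two} hypotheses: the covering property $\cD \subset \bigcup_i \phi_i(\cD)$ and a \emph{well-distributed} condition on the fixed points $z_i$ of the $\phi_i$. Its density argument pulls a small ball back until it grows large enough to contain some $z_i$, and then uses $z_i \in \overline{\orbit_\cG^+(0)}$. Accordingly the paper builds two separate families of translations, one for the covering and one to spread the fixed points, so its $k$ is twice yours.

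You bypass the well-distributed condition entirely: from the covering alone you write $y = g_\sigma(y_N)$ with $y_N \in B_\delta(0)$ and compare $g_\sigma(y_N)$ directly with $g_\sigma(0)$ via the global Lipschitz bound $K^N$. This is more elementary and uses half as many generators. What the paper's approach buys is a separately stated, $C^0$-robust geometric criterion (covering plus well-distribution) that it later transcribes verbatim to the geometric blender model in Section~\ref{s properties cs-blender} and invokes in the robustness argument of Section~\ref{s geometric cs blender are cs blender}; your contraction estimate would serve equally well there after the obvious reformulation. One small point: a $\lambda\veps$-net gives only $B_\veps(0)\subset\bigcup_i \overline{B_{\lambda\veps}(c_i)}$; take the net slightly finer (e.g.\ $\tfrac{\lambda\veps}{2}$-dense) so that the open covering you use holds with some slack, which you need anyway for the robustness step.
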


Observe that if $\phi$ is smooth, all constants depend only on $m(D\phi(0))$. 

In order to prove this proposition, we start with a non-perturbative version of it, which also clarifies the robustness of transitivity.

\begin{definition}\label{prop cont IFS} 
We say that an iterated function system $\cG(\phi_1, \dots , \phi_k )$ of contracting maps has the {\it covering property} if there is an open set $\cD$ such that 
$$\cD \subset \bigcup_{i=1}^k \phi_i(\cD).$$
We say that the set of (unique) fixed points $z_i$'s of $\phi_i$'s is {\it well-distributed} if any open ball of diameter $d$ and centered in $\cD$ contains some $z_i$, where 
$$d\geq\max\{r\mid \forall x\in \cD, \exists i, B_r(x) \subset \phi_i(\cD) \}.$$ 
\end{definition}

\begin{figure}[]
\begin{center}
\includegraphics[width=5cm]{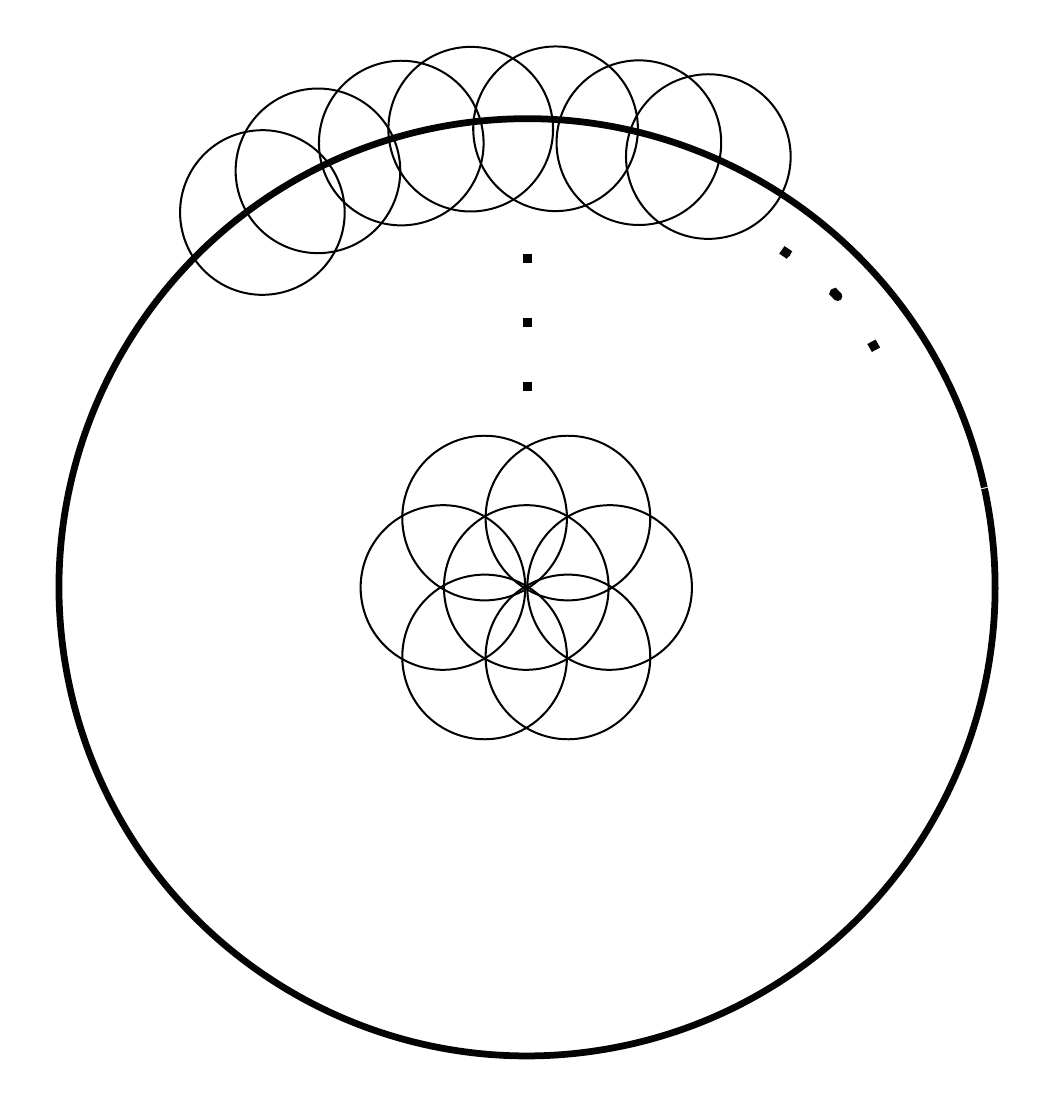} 
\label{fig ifs}
\caption{The covering and well-distributed properties. The disk $\cD$ is the largest one and the other disks are its images under $\phi_i$'s.}
\end{center}
\end{figure}

\begin{proposition} \label{pro ifs 2}
Let $\phi_i: \RR^n \longrightarrow \RR^n$, $i=1, 2, \dots , k$, be contracting maps, and let $z_i$ be their unique fixed points. Suppose that the iterated function system $\cG=\cG(\phi_1, \dots , \phi_k )$ has the covering property on $\cD$. Then for any $x \in \cD$ there exists a sequence $\{\sigma_j\}_{j=1}^{\infty}$ such that for all $j\in \NN$, $\sigma_j \in \{1, 2, \dots, k\}$, and 
$$\phi^{-1}_{\sigma_j} \circ \phi^{-1}_{\sigma_{j-1}} \circ \cdots \circ \phi^{-1}_{\sigma_1}(x) \in \cD.$$
In addition, if the set of fixed points $\{z_i\}_{i=1}^k$ is well-distributed in $\cD$ then 
$$\cD \subset \overline{\orbit_{\cG}^+(0)}.$$ 
\end{proposition}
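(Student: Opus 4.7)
The argument splits naturally into the two assertions, which I would establish in turn.

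For the first (backward) assertion, the covering property $\cD\subset\bigcup_i\phi_i(\cD)$ says exactly that every $x\in\cD$ lies in $\phi_{i}(\cD)$ for at least one index $i$. Since each $\phi_i$ is a contraction on $\RR^n$ it is injective, so $\phi_i^{-1}$ is well defined on its image. Choose such an $i$, call it $\sigma_1$, and set $x_1:=\phi_{\sigma_1}^{-1}(x)\in\cD$. Iterating the same argument with $x_1$ in place of $x$ produces $\sigma_2$ with $x_2:=\phi_{\sigma_2}^{-1}(x_1)\in\cD$, and so on by induction. This yields the required infinite sequence $\{\sigma_j\}$ with $x_j=\phi_{\sigma_j}^{-1}\circ\cdots\circ\phi_{\sigma_1}^{-1}(x)$ staying in $\cD$ for every $j$.

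For the density statement the key idea is to reverse the composition just constructed. Fix $y\in\cD$ and $\veps>0$, and let $K<1$ be a common contraction constant for $\phi_1,\dots,\phi_k$. Apply the backward construction to $y$ for $j$ iterations, with $j$ so large that $K^j\cdot\mathrm{diam}(\cD)<\veps$. By construction $y=\phi_{\sigma_1}\circ\cdots\circ\phi_{\sigma_j}(x_j)$ with $x_j\in\cD$, so since every factor is $K$-contracting one has, for \emph{every} $w\in\cD$,
\[
d\bigl(\phi_{\sigma_1}\circ\cdots\circ\phi_{\sigma_j}(w),\,y\bigr)\le K^j\, d(w,x_j)\le K^j\mathrm{diam}(\cD)<\veps.
\]
Thus the forward $\cG$-iterate of any $w\in\cD$ along the reversed word $(\sigma_j,\sigma_{j-1},\dots,\sigma_1)$ enters $B_\veps(y)$. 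Hence the forward $\cG$-orbit of any point of $\cD$ is already dense in $\cD$.

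The role of the well-distributed hypothesis is to guarantee that the orbit of the distinguished starting point $0$ actually reaches (or approaches) $\cD$ in the first place. Well-distributedness forces each fixed point $z_i$ to lie in $\overline{\cD}$, for otherwise a small ball around a point of $\cD$ sitting far from every $z_\ell$ would violate the required containment. Since $\phi_i$ attracts globally towards $z_i$, some iterate $\phi_i^{n}(0)$ enters an arbitrarily small neighbourhood of $z_i\in\overline\cD$, and composing with a suitable word supplied by the forward-density argument above then produces points of $\orbit_{\cG}^{+}(0)$ arbitrarily close to any prescribed $y\in\cD$.

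The main obstacle is the reversal of composition order: the covering property naturally builds a \emph{backward} tree of preimages lying in $\cD$, whereas density of a \emph{forward} orbit requires compositions in the opposite order. The reversal is legitimate only because the error bound above depends solely on the uniform contraction constant $K$ and the diameter of $\cD$, so the specific initial point used in the forward iteration becomes asymptotically irrelevant once $j$ is large. The second, milder, delicate point is bootstrapping from an orbit that starts outside $\cD$ into the region where the forward-density argument applies, which is precisely what well-distributedness of the $z_i$'s provides.
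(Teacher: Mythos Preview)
Your first part matches the paper exactly. For the second part you take a genuinely different, and in fact cleaner, route than the paper. The paper argues by iterating a small \emph{ball} $B_r(x_0)\subset\cD$ backward: as long as the ball fits inside a single $\phi_i(\cD)$ its preimage contains a ball of radius $K^{-1}r$, and once the radius exceeds the threshold $d$ of the well-distributed hypothesis the ball must contain some fixed point $z_i$; applying the forward word to $z_i$ lands back in $B_r(x_0)$. You instead iterate the single \emph{point} $y$ backward using Part~1, then push an arbitrary $w$ forward along the reversed word and use only the uniform contraction bound. Your argument is correct and, notably, needs \emph{only} the covering property and boundedness of $\cD$: since $d(w,x_j)\le d(w,\cD)+\mathrm{diam}(\cD)$ is bounded independently of $j$, the estimate $K^j d(w,x_j)<\veps$ already works for $w=0$ (or any $w\in\RR^n$) directly. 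Your paragraph invoking well-distributedness to bring the orbit of $0$ near $\cD$ is therefore superfluous, and the claim there that well-distributedness forces $z_i\in\overline{\cD}$ does not follow from the definition as written (which speaks only of balls of diameter $\ge d$, not small balls). What the paper's approach buys is not the proposition itself but its later reuse: in Section~\ref{s geometric cs blender are cs blender} the same scheme is run with $s$-strips in place of balls for perturbations $G$ that are no longer skew products, and there one genuinely must iterate a \emph{set} backward until it meets the local unstable manifold of some fixed point---precisely the situation the well-distributed hypothesis is designed for.
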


\Proof
To prove the first part notice that given a point $x \in \cD$, the covering property says that there is $\sigma_1 \in \{1, 2, \dots, k\}$ such that $\phi^{-1}_{\sigma_1}(x) \in \cD$. Then, inductively, one  constructs a sequence  $\{\sigma_j\}_{j=1}^{\infty}$ such that 
$\phi^{-1}_{\sigma_j} \circ \phi^{-1}_{\sigma_{j-1}} \circ \cdots \circ \phi^{-1}_{\sigma_1}(x) \in\cD$.

Now we prove the second part. The well-distributed property yields that for any 
small ball $B_r(x_0)$ in $\cD$, either it belongs to some $\phi_i(\cD)$ or it contains the fixed point of some $\phi_i$. Let $0<K<1$ such that $d (\phi_{i}(x), \phi_{i}(y))< K d(x,y)$ for any $ i \in \{1, 2, \dots, k\}$. If the ball $B_r(x_0)$ is very small then it belongs to the domain of some $\phi_i$, i.e. $B_r(x_0)\subset \phi_i(\cD)$, and so there is $x_1\in\cD$ such that 
 $B_{K^{-1}\cdot r}(x_1) \subset \phi_i^{-1}(B_r(x_0)) \subset \cD$. We may continue this process inductively.  Since the ratio of the balls is increasing exponentially at rate $K^{-1}>1$, after some iteration, it would be large enough to contain the fixed point of some $\phi_i$. This completes the proof. 
\Endproof

\begin{remark}
The well-distributed property yields that for any 
small ball $B_r(x_0)$ in $\cD$, either it belongs to some $\phi_i(\cD)$ or it contains the fixed point of some $\phi_i$.  
\end{remark}

\Proofof{Proposition \ref{pro ifs 1}}
It is enough to show that there exists a number $k$, and certain (small) translations of the map $\phi$, such that the covering property and the well-distributed hypothesis holds in some open ball $B_{\veps}(0)$. Then using Proposition \ref{pro ifs 2} we obtain the density of the $\cG$-orbit of $0$. 
To show that, we consider a cover of the unit ball by $k$ balls of radius $\lambda$. There is a number $C(n)$ such that for any $r<1$, there is a cover of the unit ball in $\RR^{n}$ by $C(n) r^{-n}$ number of  balls of  radius $r$.
So there is a $k_{1}=C(n)2^{n}\cdot \lambda^{-n}$ number of points $c_{i}\in\DD^{n}$ (where $\DD^n=B_1(0)$), such that 
$$\overline{\DD}^{n}\subset \bigcup_i B_{\lambda/2}(c_{i}), $$ 
and so 
$$\overline{\DD}^{n}\subset \bigcup_i ( \phi(\DD^{n})+ c_{i}). $$
Moreover, any ball of radius smaller than $\lambda/2$ is contained in some $\phi(\DD^{n})+ c_{i}$. 

We also choose  a set of $z_{i}$, $i=1, \dots, k_{1}$ which is $\lambda$-dense in the unit ball. So,  any ball of radius greater than $\lambda/2$ in the unit ball contains some $z_{i}$.
Let $c_{i+k_{1}}=({\rm id}-\phi)(z_{i})$. Indeed, $z_{i}$ is the unique fixed point of the contraction $\phi+c_{i+k_{1}}$.

Now, let $k=2k_{1}=C(n)2^{n+1}\cdot \lambda^{-n}$, $\phi_{0}=\phi$ and for $i=1, \dots, k$, $\phi_{i}=\phi+c_{i}$. Again, robustness follows from the fact that the covering property and the well-distributed hypothesis are $C^{0}$ robust properties if the contraction bounds of the nearby maps are close to the initial ones.
\Endproof

\subsubsection{IFS of expanding  maps}\label{s ifs expanding}
A map $\phi$ on a metric space $(X,d)$ is called {\it expanding} if and only if it is invertible and  its inverse is a contracting map.  Observe that in the expanding case, instead of the density of the forward orbit under the IFS, we look for the density of the backward orbits under the IFS.
In particular, the covering property is also formulated for the inverse maps.  Similar propositions to the one for contracting maps can be formulated for expanding maps, and although translation of a map is not a translation of its inverse, the proof of the equivalent propositions for expanding maps are done in the same way.

\subsection{IFS of recurrent diffeomorphisms}\label{s ifs rec}
In this section we study the transitivity of an iterated function system of recurrent diffeomorphisms. 
Here, by recurrent diffeomorphism we mean a diffeomorphism for which almost all points are recurrent. 
Recall that the  Poincar\'e Recurrence Theorem asserts that any diffeomorphism preserving a finite volume form is recurrent.

The results of this section shall be used in the proof of Theorem \ref{thm A} and other main theorems. We first prove several results  for integrable systems and then we generalize them to general recurrent diffeomorphisms. In fact, in Proposition \ref{pro ifs 3} we work  with the IFS of integrable symplectic diffeomorphisms and in Theorem \ref{thm rec} we extend previous proposition to the case of recurrent ones.

Let us first recall the statement of Liouville-Arnold Theorem that allows us to re-define integrable systems as the following.

We say that  $f\in \diff^{r}_{\w}(N)$ is {\it integrable} if there exist open sets $N_{i}$ such that
\begin{itemize}
\item [-] $N=\overline{ \cup N_{i}}$,
\item [-] $N_{i}$'s are mutually disjoint open sets,
\item [-] for any $i$, $N_{i}$ is invariant and diffeomorphic to $\UU^{n}\x\TT^{n}$ by a (symplectic)  diffeomorphism $h_{i}$,
\item [-] any torus $h^{-1}_{i}(\{x\}\x\TT^{n})$ is $f$-invariant and  its dynamics is conjugate to a rotation.
\end{itemize} 

\noindent {\bf Addendum.} We also assume the following assumptions:
\begin{itemize} 
\item [-] bunching condition: 
$$\limsup_{n\to \infty} \frac{1}{n} \log( ||Df^n||/m(Df^n))$$
 is sufficiently close to zero. (Observe that the limit is always zero on $\cup N_{i}$ and if there exists a periodic point in the boundary of any $N_i$, then the larger and smaller eigenvalues are close.)
\item [-] the family $\{N_{i}\}$ is locally finite in $N$, that is, any compact subset of $N$ intersects with only finite number of $\{N_{i}\}$.
\item  [-] the map $\UU^{n}  \ni x\mapsto  \w_x\in\TT^n$  is a local diffeomorphism, 
where $\w_x$ is the rotation vector of $f$ on  $h^{-1}_{i}(\{x\}\x\TT^{n})$. 
\end{itemize}


\begin{lemma} \label{lem tori}
Let $f_1$ be an integrable symplectic diffeomorphism on the symplectic manifold $N$. Then $C^r$-arbitrarily close to $f_1$ there exists another integrable symplectic diffeomorphism $f_2$ 
which is conjugate to $f_1$ by a smooth change of coordinates 
on $N$ such that 
\begin{enumerate}
\item any $f_1$-invariant torus intersects transversally some $f_2$-invariant torus, and vice versa, 
\item given two open sets $U, V\subset N$, there is a chain of tori $\cT_j$, $j=1,2, \dots, s$, invariant for $f_{\sigma_j}$, $\sigma_j=1$ or $2$, such that, each $\cT_j$ (for $j<s$) intersects transversally $\cT_{j+1}$; $\cT_1$ intersects $U$ and $\cT_s$ intersects $V$.
\end{enumerate}
\end{lemma}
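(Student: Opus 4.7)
The plan is to realise $f_2$ as $\Phi^{-1}\circ f_1\circ \Phi$ for a small symplectic diffeomorphism $\Phi$ that \emph{tilts} the integrable foliation of $f_1$ so that the image foliation meets the original one transversally. Since $f_2$ is then smoothly symplectically conjugate to $f_1$ on all of $N$, and $C^r$-closeness of $f_2$ to $f_1$ follows from $C^{r+1}$-smallness of $\Phi-\mathrm{id}$, the whole content of the lemma is the geometric construction of $\Phi$.

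I would build $\Phi$ chart by chart. In each $N_i \cong \UU^n\times\TT^n$, with action--angle coordinates $(x,q)$, fix a Morse function $g_i\colon \TT^n\to\RR$ and a smooth cutoff $\chi_i\colon \UU^n\to[0,1]$ compactly supported in $\UU^n$ and identically $1$ on a prescribed large compact subset. Set $H:=\sum_i \chi_i(x)\,g_i(q)$, which is smooth on $N$ by the locally-finite hypothesis on $\{N_i\}$ and vanishes near $\partial N_i$, so it extends by zero across the boundaries. Take $\Phi$ to be the time-$\epsilon$ map of the Hamiltonian flow of $H$: it is symplectic by construction and $C^r$-close to the identity for small $\epsilon$. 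In the region where $\chi_i\equiv 1$ the Hamiltonian vector field depends only on $q$, so the flow integrates explicitly to $\Phi(x,q)=(x+\epsilon\nabla g_i(q),\,q)$; hence the $f_2$-invariant torus labelled $y$ in that region is $\Phi^{-1}(\{y\}\times\TT^n)=\{(y-\epsilon\nabla g_i(q),\,q):q\in\TT^n\}$.

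For (1), the $f_1$-torus $\{x_0\}\times\TT^n$ and the $f_2$-torus labelled $x_0$ meet precisely at pairs $(x_0,q)$ with $\nabla g_i(q)=0$, that is, at critical points of $g_i$. At such a point the tangent to the $f_2$-torus is the column span of $\bigl(\begin{smallmatrix}-\epsilon\,\mathrm{Hess}(g_i)(q)\\ I\end{smallmatrix}\bigr)$, and the Morse condition makes $\mathrm{Hess}(g_i)(q)$ invertible, so together with the tangent $\{0\}\times\RR^n$ of the $f_1$-torus they span all of $T_{(x_0,q)}N$: the intersection is transverse. Enlarging $\chi_i$ exhausts $N_i$, so every $f_1$-torus in that chart meets some $f_2$-torus transversally and vice versa.

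For (2), pick $U,V\subset N$ and shrink them so each lies in a single chart and meets some invariant torus, taken as $\cT_1$ and $\cT_s$. An $f_2$-torus with label $y$ transversally meets an $f_1$-torus with label $x$ whenever the meeting condition $\nabla g_i(q)=(x-y)/\epsilon$ has a solution at which $\mathrm{Hess}(g_i)$ is invertible; by Morseness this happens in an $O(\epsilon)$-neighborhood of the equal-label case. Alternating $f_1$- and $f_2$-tori with labels moving by less than $O(\epsilon)$ at each step produces the desired chain inside a single chart in $O(\mathrm{diam}(\UU^n)/\epsilon)$ steps. The main technical hurdle is the global linking across different charts: because $f_1$-tori, and hence also the corresponding $f_2$-tori produced by our $\Phi$, remain confined to a single $N_i$, the cutoffs must be arranged so that $\Phi$ bridges adjacent charts near the degenerate locus $\partial N_i$; in the generality needed for Theorems \ref{thm A}--\ref{thm D} it is enough to apply the construction in each connected invariant component of $\bigcup_i N_i$ separately.
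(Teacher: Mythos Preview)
Your Morse-function tilt within a single action--angle chart is elegant and yields transversality more transparently than the paper's explicit two-dimensional Hamiltonian. But there is a genuine gap in part (2): your $\Phi$ is the time-$\epsilon$ map of a Hamiltonian supported in $\bigcup_i N_i$ and vanishing near each $\partial N_i$, so $\Phi$ preserves every $N_i$ and is the identity near their boundaries. Hence every $f_2$-torus stays in the same chart as the $f_1$-torus it came from, and no chain of alternating tori can cross from $N_i$ to $N_j$. You flag this as ``the main technical hurdle'' but do not resolve it; saying ``cutoffs must be arranged so that $\Phi$ bridges adjacent charts'' is not a construction, and retreating to ``connected components of $\bigcup_i N_i$'' just restates the difficulty, since each $N_i$ is already such a component. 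The paper closes this gap with a separate second step: after the chart-wise tilt $\varphi$ (analogous to your $\Phi$), it composes with further small symplectomorphisms $\varphi_{ij}$ supported in tiny neighbourhoods $U_{ij}$ of regular points of the shared boundary hypersurfaces $S_{ij}\subset\partial N_i\cap\partial N_j$, chosen so that $\varphi_{ij}$ pushes a piece of $N_j$ into $N_i$ and vice versa. The final conjugacy is $\phi=(\circ_{ij}\varphi_{ij})\circ\varphi$, and now some $f_2$-tori genuinely straddle two charts, allowing chains to pass between them.

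There is also a secondary issue inside each chart. Because your $\chi_i$ is compactly supported in $\UU^n$, there is an open collar of $\partial\UU^n$ on which $\chi_i\equiv 0$; there $\Phi=\mathrm{id}$, the $f_2$-tori coincide with the $f_1$-tori, and item (1) fails for those tori (they meet no $f_2$-torus transversally, and the deformed $f_2$-tori from the interior only reach an $O(\epsilon)$-neighbourhood of the set $\{\chi_i>0\}$). The sentence ``Enlarging $\chi_i$ exhausts $N_i$'' does not fix this: a single $\Phi$ requires a single $\chi_i$, which cannot be compactly supported and equal to $1$ on all of $\UU^n$. The paper avoids this by taking its bump function $\eta$ strictly positive on all of $(0,1)$, vanishing only at the endpoints, so that every circle $\{r=c\}$ with $c\in[0,1)$ is genuinely displaced by $\psi^1$.
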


\Proof
We construct a symplectic diffeomorphism $\phi\in \diff^{r}_{\w}(N)$ close to the identity such that   $f_2 = \phi \circ f_1\circ \phi^{-1}$ has the desired properties.  

By the assumptions, $N=\cup\overline{N_{i}}$, where $N_{i}$ is diffeomorphic to $\UU^{n}\x\TT^{n}$ by a symplectic diffeomorphism $h_{i}$.  
It is convenient to consider the polar coordinate system on $\UU^{n}\x\TT^{n}$, that is, any point is represented by 
$$(r_{1}, \dots, r_{n}, \theta_{1},\dots, \theta_{n}),$$
where $0< r_{i}<1$ and $\theta_{i} \in \TT^1$.

The construction of $\phi$ has two steps.

{\it Step 1.}  
Let $\psi^{\tau}\in \diff^{r}_{\w}(\UU^{1}\x\TT^{1})$ be the time  $\tau$ map of a completely integrable Hamiltonian flow such that in the polar coordinates we have
\begin{itemize}
\item  [-] $\psi^{\tau}(r,\theta)= (r, \theta)$, for any $\tau  ~{\rm if} ~ r\geq 1$,
\item  [-] $\psi^{1}(\{r=c\})\neq \{r=c\},  ~{\rm if} ~ 0\leq c <1$, 
\end{itemize}
In addition, given an open set $W_{0}$ in the annulus $\UU^{1}\x\TT^{1}$ containing a curve that connect the two boundary components of the annulus we have: 
\begin{itemize}
\item [-]  any two open sets in the unit disk $\{r<1\}$ are connected by a finite chain of circles  $\{r=c_{j}\}$ and $\psi^{1}(\{r=c_{i}\})$ such that the the transition points are contained in $W_0$. 
\end{itemize}
The Hamiltonian flow $\psi^\tau$ can be defined explicitly by 
the Hamiltonian 
$$h_\veps(r,\theta):=\eta(r)\cdot\frac{r^{2}\sin(\theta) + r^{2}\cos(\theta) (1+\veps\cdot \eta(r))^{2}}{1+\veps\cdot \eta(r)},$$
where  $\veps>0$ is small and $\eta$ is a (suitable) non-negative $C^{\infty}$ bump function on the real line such that $\eta(x)>0$ if and only if $x\in (0,1)$.

Now let 
$$\psi_{\tau} =\overbrace{\psi^{\tau}\x \cdots \x \psi^{\tau}}^{n ~ times}.$$ 
Define $\varphi \in\diff^{r}_{\w}(N)$ by
$$\varphi = \left\{ \begin{array}{lcl} 
h^{-1}_{i} \circ \psi_{\tau_{i}} \circ h_{i}  & \mbox{on}  & N_{i} \\
{\rm id}    & \mbox{on}  & N \setminus \cup N_{i}
\end{array}\right.
$$ 
where $\tau_{i}$ is small enough and depends on $N_{i}$.

The smoothness of $\varphi$ on each $N_{i}$ is obvious. The smoothness of $\varphi$ on the boundary of $N_{i}$'s  follows from the following facts: (1) the time $\tau_{i}$ is small enough and depends on $N_{i}$; (2) the sets  $N_{i}$ are mutually disjoint and (3) $\psi$ is equal to the identity on the boundary of $\UU^{n}\x\TT^{n}$.
In order to get $\varphi$ close to the identity, it is enough to take $\tau$ small enough.

The items (1) and (2) of the lemma hold for the pair of diffeomorphisms $f_1$ and $\varphi \circ f_1\circ \varphi^{-1}$, at least on each $N_i$. 
Moreover, given any pair of open sets in $N_{i}$ and any open set $W_{1}$ that intersects all the $f_1$-invariant tori  in $N_i$, there exists a chain as in item (2) of the lemma such that the intersections between the tori are contained in $W_{1}$.

{\it Step 2.}
Let $i>j$ such that  $\partial N_{i} \cap \partial N_{j}$ contains a regular hypersurface (codimension one submanifolds) $S_{ij}$. Then for any such $i,j$ we consider a small open neighborhood $U_{ij}$ of some point of the hypersurface $S_{ij}$. The sets $U_{ij}$ are pairwise disjoint. 
We choose $U_{ij}$ small enough such that $W:=N\setminus \cup U_{ij}$ intersects all the $f_1$-invariant tori.

Let  us define $U^{+}_{ij}=U_{ij}\cap N_{i}$ and $U^{-}_{ij}=U_{ij}\cap N_{j}$.  
Then consider a symplectic diffeomorphism $\varphi_{ij}$ supported in $U_{ij}$ and close to the identity such that 
$$\varphi_{ij}(U^{-}_{ij})\cap U^{+}_{ij}\neq\vazio   \andrm  \varphi_{ij}(U^{+}_{ij})\cap U^{-}_{ij} \neq\vazio.$$

Now we take the composition of the all the above diffeomorphisms to define $\phi\in \diff^{r}_{\w}(N)$, that is 
$$\phi:= (\circ_{ij} \varphi_{ij})\circ \varphi.$$

Now, we define 
$$f_2 := \phi \circ f_1\circ \phi^{-1}.$$
Observe that $\phi=\varphi$ on the open set $W$, and as mentioned before it follows that the items (1) and (2) hold on each $N_i$. So, given any two open sets $U, V$ in $N$, if they intersect the same $N_i$, there is nothing left to prove. Otherwise, there exist   $j_1, j_2$, such that $U\cap N_{j_1}\neq \vazio $ and $U\cap N_{j_2}\neq \vazio$. Then, one may find a chain of $N_i$ from $N_{j_1}$ to $N_{j_2}$ by taking iteration by $f_2$.  
Consequently,  $f_2$ has the desired properties.  
\Endproof

\begin{proposition} \label{pro ifs 3}
Let $T_1$ be an integrable symplectic diffeomorphism on the symplectic manifold $N$.
Then, arbitrarily close to $T_1$ there exists an integrable symplectic diffeomorphism $T_2$ 
on $N$ such that the iterated function system $\mathcal{G}(T_1^{d}, T_2^{d'})$ has a dense orbit, for any $d, d'\in \ZZ$. Moreover, almost all points have dense $\cG$-orbits.
\end{proposition}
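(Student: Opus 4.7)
The plan is to construct $T_2$ via Lemma \ref{lem tori} and then combine the chain property with the minimality of rotations on a full-measure set of invariant tori, together with a saturation argument, to produce dense orbits of the IFS $\cG(T_1^d,T_2^{d'})$.

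First I would apply Lemma \ref{lem tori} with $f_1=T_1$ to obtain an integrable symplectic $T_2$ arbitrarily $C^r$-close to $T_1$ whose invariant tori meet those of $T_1$ transversally and satisfy the chain property (2) of that lemma. The addendum to the definition of integrable diffeomorphism says that the rotation-vector map $x\mapsto\omega_x$ is a local diffeomorphism; and for each nonzero $d\in\ZZ$ the set of $\omega\in\TT^n$ for which $d\omega$ is rationally independent of $1$ has full Haar measure, so the countable intersection over all nonzero $d$ is still of full measure. Pulling back through $x\mapsto\omega_x$, I get a full-measure set of parameters $x$ whose associated tori carry minimal (and uniquely ergodic) $T_i^d$-dynamics for every $d\in\ZZ\setminus\{0\}$ simultaneously. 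Call these tori \emph{good}.

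Next I would establish the reachability statement: for every point $p$ lying on a good torus and every open $V\subset N$, some word $g_\sigma$ in the alphabet $\{T_1^d,T_2^{d'}\}$ satisfies $g_\sigma(p)\in V$. Apply Lemma \ref{lem tori}(2) with a small neighborhood of $p$ and with $V$ to produce a chain $\cT^{(1)},\ldots,\cT^{(s)}$ of invariant tori with consecutive transverse intersections, $p\in\cT^{(1)}$ and $\cT^{(s)}\cap V\neq\varnothing$. A slight adjustment of the chain (exploiting the density of good tori) allows me to assume each $\cT^{(j)}$ is good for its preserving map $T_{i_j}$. Starting from $p\in\cT^{(1)}$, iterate $T_{i_1}^{d_{i_1}}$ until within $\delta_1$ of a chosen transverse point $q_1\in\cT^{(1)}\cap\cT^{(2)}$; apply $T_{i_2}^{d_{i_2}}$ once, landing in a prescribed tubular neighborhood of $T_{i_2}^{d_{i_2}}(q_1)\in\cT^{(2)}$; iterate $T_{i_2}^{d_{i_2}}$ until within $\delta_2$ of $q_2\in\cT^{(2)}\cap\cT^{(3)}$; and so on. Choosing the tolerances $\delta_j$ backwards from $V$ and absorbing the continuity error of each single iterate into the previous tolerance makes the composed word land in $V$.

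To upgrade this reachability to almost every point having a dense $\cG$-orbit I would argue by saturation. If $q\in\cT$ lies on a good torus and $g_\sigma(q)\in V$ for some word $g_\sigma\in\cG$, then by minimality of $T_i^{d_i}$ on $\cT$ there exist integers $n_k$ with $T_i^{d_i n_k}(p)\to q$ for every $p\in\cT$, whence $g_\sigma\circ T_i^{d_i n_k}(p)\to g_\sigma(q)\in V$ and, for large $k$, the image lies in $V$; since $g_\sigma\circ T_i^{d_i n_k}$ is itself a word of $\cG$, the point $p$ has an orbit meeting $V$. Fixing a countable basis $\{V_k\}$ of the topology of $N$ and setting $E_k:=\bigcup_\sigma g_\sigma^{-1}(V_k)$, the reachability statement of the previous paragraph, applied at any good torus, shows every good torus lies in $E_k$; since the good tori carry full Lebesgue measure, so does $E_k$, and therefore so does $\bigcap_k E_k$, which is precisely the set of points with dense $\cG$-orbit. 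The main technical obstacle is the shadowing in the previous paragraph, where the continuity errors accumulated from switching between $T_1^d$ and $T_2^{d'}$ must be absorbed; this is handled by picking the tolerances $\delta_s,\delta_{s-1},\ldots,\delta_1$ backwards along the chain using continuity of a single iterate of each map.
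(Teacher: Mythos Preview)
Your argument is correct and follows the same strategy as the paper: build $T_2$ from Lemma~\ref{lem tori} and then shadow the transverse chains of invariant tori, using that a full-measure set of tori carry minimal rotations for every nonzero power.

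The one place where the execution differs is the shadowing mechanism. You fix the number of iterates at each link of the chain from the minimal dynamics on the \emph{reference} tori $\cT^{(j)}$, and then propagate tolerances backward via continuity of that fixed finite iterate; this way you never need to know that the \emph{actual} orbit point sits on a good torus after a switch. The paper instead defines a $\cG(T_1,T_2)$-invariant full-measure set $\cS$ of points whose entire $\cG$-orbit remains quasi-periodic for both maps, proves that the complement of $\cS$ lies in a countable union of codimension-one submanifolds, and then shadows by observing that at every step the current orbit point itself lies on a good torus for the next generator. Both are valid; your version is slightly more self-contained for this proposition, while the paper's construction of $\cS$ (and specifically the codimension-one description of its complement) is exactly what feeds the transversality induction in the subsequent Theorem~\ref{thm ifs min} on minimal IFS.
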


\Proof 
Let $T_2$ be  an  integrable diffeomorphism.
Let $\mathcal{S}_0$ be the set of all quasi periodic points of $T_1$ and observe that the set is $T_1$-invariant. Similarly, let $\mathcal{S}'_0$ be the set of all quasi periodic points of $T_2$, which is $T_2$-invariant. The complement of $\mathcal{S}_0\cap\mathcal{S}'_0$ has zero Lebesgue measure.
Let $\mathcal{S}$ be  the set of all points whose orbits under the iterated function system $\mathcal{G}(T_1, T_2)$ belong to $\mathcal{S}_0\cap\mathcal{S}'_0$.

\claim
{\it The complement of $\mathcal{S}$ is contained in a countable union of codimension one submanifolds. So, $\mathcal{S}$ has  total Lebesgue measure.}

\Proofof{Claim}
Let $\cB_{0}:=N\setminus \cS_0$ and $\cB:=N\setminus \cS$. Then, $\cB_{0}$ is contained  in a countable union of codimension one submanifolds. On the other hand, $\cB$ is the orbit of $\cB_{0}$ under the IFS $\mathcal{G}(T_1, T_2)$. So, it is a {\it countable} union of the iterates of $(\cB_{0})$. This proves the claim.

Now we apply Lemma \ref{lem tori} for $T_{1}$ and we obtain $\phi\in \Diff^r_{\omega}(N)$ close to the identity.   Now we set $T_2 := \phi \circ T_1\circ \phi^{-1}$. Define $\cS$ as above and observe that  $\cS$ has total Lebesgue measure.
We want to show that the  orbit of any point of $\cS$ is dense.  

Given two open sets $U, V$, there is a chain of tori $\cT_j$, $j=1,2, \dots, s$, invariant for $T_{\sigma_j}$, $\sigma_j=1$ or $2$, such that each $\cT_j$ intersects (transversally) $\cT_{j+1}$,  $\cT_1$ intersects $U$ and $\cT_s$ intersects $V$. It is not difficult to find an orbit of $\cG$ which shadows this chain. For any $z \in \cS$, there is $n_z$ such that $T_{\sigma_j}^{n_z}(z)$ is close to $\cT_{j+1}$ if $z$ is sufficiently close to $\cT_j$.  The set $\cS$ is $\cG(T_1, T_2)$-invariant. So if  $z \in \cS $ is sufficiently close to $\cT_1$, then  it has a $\cG$-orbit shadowing all $\cT_j$, and therefore there is an orbit from $U$ to $V$.
Moreover, given any point $x\in \cS$ and any open set $U$, there is a finite sequence of tori $\cT_i$, $i=1, \dots, n$, invariant for $T_1$ or $T_2$ (alternatively), such that $x \in \cT_1$, $\cT_n \cap U \neq \vazio$, and for any $i$, $\cT_i$ intersects transversally $\cT_{i+1}$. Then it follows that there exists $\Sigma=(\sigma_1, \dots, \sigma_m)$ such that $T_{_{\Sigma}}(x)\in U$. This completes the proof.
\Endproof

\begin{remark} 
If the set of quasi periodic points is residual then following the same argument in the proof of Proposition \ref{pro ifs 3}, we conclude that the set of all points with dense orbit for $\mathcal{G}(T_1, T_2)$ is also residual.
\end{remark}

The following example helps to understand the above described techniques. 
\example{
Let $N=\mathbb{R}\times (\mathbb{R}/2\pi \mathbb{Z})$ and 
$$T_1 : (I,\theta) \longmapsto (I,~ \theta + h(I) ).$$
In this case, we choose the change of coordinates 
$$\phi : (I,\theta) \longmapsto (I+ \epsilon\cos \theta,~ \theta ).$$
Then we define $T_2 = \phi \circ T_1 \circ \phi^{-1}$. }

\begin{theorem}[Minimal IFS] \label{thm ifs min}
Let $T_1$ be an integrable diffeomorphism on the  symplectic manifold $N$. Let $m=\dim(N)+2$.
Then, arbitrarily close to $T_1$ there exist integrable symplectic diffeomorphisms $T_2, \dots , T_{m}$ on $N$ such that the iterated function system $\mathcal{G}(T_1, \dots , T_{m})$ is minimal (i.e. every point has a dense orbit). \end{theorem}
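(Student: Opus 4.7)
The plan is to bootstrap from Proposition~\ref{pro ifs 3}, which provides $T_2$ close to $T_1$ such that $\mathcal{G}(T_1,T_2)$ is transitive with a full-measure set $\mathcal{S}$ of points having dense orbits, by adjoining $\dim(N)$ further integrable conjugates $T_3,\dots,T_m$ of $T_1$ in order to eliminate the exceptional set $\mathcal{B}=N\setminus\mathcal{S}$. Recall that $\mathcal{B}$ lies in a countable union of codimension-one submanifolds (rational invariant tori of $T_1$ and $T_2$ and their $\mathcal{G}$-iterates): a point $x$ fails to have a dense orbit only because along its $\mathcal{G}(T_1,T_2)$-orbit it is trapped on rational tori, and with sufficiently many additional generators we can escape this trap at every point.

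First, for each integrable $T_k$ I fix the rotation-vector map $\omega_k:N\to\mathbb{T}^n$ provided by Liouville--Arnold coordinates, which is a local diffeomorphism by the Addendum. A point $x$ lies on a quasi-periodic torus of $T_k$ if and only if $\omega_k(x)$ avoids the countable union $R\subset\mathbb{T}^n$ of resonant hyperplanes $\{\omega\,:\,k\cdot\omega\equiv 0\pmod{1},\ k\in\mathbb{Z}^n\setminus\{0\}\}$. Taking $T_k=\phi_k\circ T_1\circ\phi_k^{-1}$ gives $\omega_k=\omega_1\circ\phi_k^{-1}$, so small symplectic perturbations of $\phi_k$ translate directly into small perturbations of $\omega_k$.

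The crucial step is to choose small symplectic $\phi_3,\dots,\phi_m$ (with $m=\dim(N)+2=2n+2$) so that the combined map
\[
\Omega := (\omega_1,\dots,\omega_m) : N \longrightarrow (\mathbb{T}^n)^m
\]
is transverse to every stratum of the countable family $R\times\cdots\times R$, which has codimension $m$ in $(\mathbb{T}^n)^m$. Because $\dim N=2n<m$, such transversality forces $\Omega^{-1}(R\times\cdots\times R)=\emptyset$, so that for \emph{every} $x\in N$ there exists an index $i$ with $\omega_i(x)\notin R$; equivalently, $x$ sits on a quasi-periodic torus of $T_i$ and its $T_i$-orbit is dense in that torus. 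Combining this with Lemma~\ref{lem tori} applied to appropriate pairs $(T_i,T_j)$ to ensure transversality between tori of distinct generators, the chain-of-tori argument of Proposition~\ref{pro ifs 3} now starts from \emph{any} $x$: one joins the $T_i$-torus through $x$ via transversally intersecting tori of the $T_j$'s to a torus meeting any prescribed open set $U$, then shadows this chain by a word in the $T_k$'s.

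The main obstacle is the simultaneous realizability of the transversality condition above: $2n+1$ conjugators must be perturbed so that $\Omega$ is simultaneously transverse to \emph{all} countably many resonance hyperplanes. This is where the local-diffeomorphism hypothesis on $\omega_x$ (Addendum) and the flexibility of the construction of Lemma~\ref{lem tori}, which allows the $\phi_k$ to be prescribed on arbitrary open pieces of $N$, become essential. A parametric Thom transversality argument, together with a Baire-category selection in the space of admissible $(\phi_3,\dots,\phi_m)$, should deliver the required simultaneous transversality while keeping every $T_k$ arbitrarily $C^r$-close to $T_1$.
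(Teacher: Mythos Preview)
Your transversality core—use $\dim N$ extra conjugators to force a countable family of codimension-one submanifolds to have empty common intersection—is precisely what the paper does, and the count $m=\dim N+2$ arises for the same reason. Where you diverge is in the choice of bad set, and this creates a genuine gap that you have mislocated as secondary.

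The paper applies the inductive transversality not to the resonance loci of the individual $T_k$ but to $\mathcal{B}=N\setminus\mathcal{S}$, where $\mathcal{S}$ is the $\mathcal{G}(T_1,T_2)$-\emph{invariant} set of points with dense orbit from Proposition~\ref{pro ifs 3}. Since $\mathcal{B}$ lies in countably many codimension-$1$ submanifolds, generic $\phi_3,\dots,\phi_m$ make the successive intersections drop in dimension until they are empty; hence every $x\in\mathcal{B}$ is pushed into $\mathcal{S}$ by some word in the $T_j=\phi_jT_1\phi_j^{-1}$, and from there density is already established. The key point is that $\mathcal{S}$ is invariant under $\mathcal{G}(T_1,T_2)$, so once inside, the chain-shadowing of Proposition~\ref{pro ifs 3} never leaves it. Your weaker conclusion—that every $x$ is quasi-periodic for \emph{some} $T_k$—does not support the shadowing step you invoke. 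If $x$ lies on an irrational $T_i$-torus $\mathcal{T}_1$ and you iterate $T_i$ toward an intersection with an irrational $T_j$-torus $\mathcal{T}_2$, the actual iterate $T_i^n(x)$ is one specific point of a countable orbit in $\mathcal{T}_1$ and may well sit on a \emph{rational} $T_j$-torus: the $T_j$-irrational locus in $\mathcal{T}_1$ is only a dense $G_\delta$, which a countable dense set need not meet. Your hypothesis then guarantees only that $T_i^n(x)$ is quasi-periodic for some $T_{k'}$ with possibly $k'\neq j$, so the pre-chosen chain cannot be followed. Thus the real obstacle is not the parametric transversality (a routine Baire argument in both approaches) but the passage from ``quasi-periodic for some generator at one point'' to ``dense orbit'', and that is exactly what working with the invariant set $\mathcal{S}$ buys.
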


\Proof Let $T_2$ be the integrable diffeomorphism given by Proposition \ref{pro ifs 3}.
It follows from the claim in the proof of Proposition \ref{pro ifs 3}  that 
 the  complement of $\cS$ is contained in a countable union of codimension one submanifolds of $N$. Let $\cL_1:=\{L_i\}_i$ be the family of those submanifolds and so $\cB:=N\setminus \cS=\bigcup \cL_1$. (Here we use the union $\bigcup$ as the unitary operator, as well.)

Since any point in $\cS$ has a dense orbit, to prove minimality, it is enough to show that there exist integrable diffeomorphisms $T_3, \dots, T_m$ close to $T_{1}$, such that any point  $x\in N\setminus \cS$ has an iterate in $\cS$ for the iterated function system $\cG(T_3, \dots, T_m)$.

We give a proof of this statement using transversality theory. 
It is known that the set of diffeomorphisms transversal to a submanifold $L$ is $C^r$ residual.
On the other hand, if $L$ and $L'$ are two submanifolds of codimensions $m$ and $m'$, respectively, then the transversal intersection of them has codimension $m+m'$.

Therefore, for any $i,j$, there is a residual set $R_{1,i,j} \subset \diff_\w^r(N)$ such that 
for any $f\in R_{1,i,j}$ and $L_i, L_j\in \cL_1$, the submanifolds $L_i$ and $f(L_j)$ are transversal and so the intersection $L_i\cap f(L_j)$ is a countable union of codimension two submanifolds. 
Let $\phi_3\in \cR_1:=\bigcap_{i,j} R_{1,i,j}$ sufficiently close to the identity,  then $\bigcup_{i,j} (L_i\cap \phi_3(L_j))$ is a countable union of submanifolds $L'_{j}$,  $j\in \NN$, of codimension two. 
Let $\cL_2:=\{L'_i\}_i$ be the family of those submanifolds.
We have $\cB \cap \phi_3 (\cB) = \bigcup \cL_2$.

We repeat this argument for any pair of $L_i\in \cL_1$ and $L'_j\in \cL_2$ to get the residual set of diffeomorphisms $R_{2,i,j}$. By choosing  $\phi_4\in \cR_2:=\bigcap_{i,j} R_{2,i,j}$ sufficiently close to the identity,  then $\bigcup_{i,j} (L_i\cap \phi_4(L_j))$ is a countable union of submanifolds $L''_{j}$,  $j\in \NN$, of codimension 3. We define $\cL_3$ of such codimension 3 submanifolds. We have $$\cB \cap \phi_3 (\cB) \cap \phi_4 (\cB \cap \phi_3 (\cB)) = \cB \cap \phi_3 (\cB) \cap \phi_4 (\bigcup\cL_2) = \bigcup \cL_3.$$

Repeating this argument inductively, we get $\phi_3, \dots, \phi_{m-1}$
and the countable set $\cL_{m-2}$,  (i.e., a countable set of codimension $\dim(N)$ submanifolds), such that $\cB \bigcap_{j=3}^{m-1} \phi_j( \bigcup \cL_{j-2}) =\bigcup \cL_{m-2}$.

Now, for a generic diffeomorphism $\phi_m$ which we assume to be sufficiently close to the identity, we have $\cB\cap \phi_m(\bigcup \cL_{m-2})=\vazio$. 
This means that for any $x\in\cB$ there exist $j_1, \dots , j_i$ such that  $\phi_{j_1} \circ \cdots \circ \phi_{j_i}(x)\in \cS$.
Now, we define $T_{j}:=\phi_{i} \circ T_{1}\circ \phi_{i}^{-1}$. It follows that any point  $x\in N\setminus \cS$ has an iterate in $\cS$ for the iterated function system $\cG(T_3, \dots, T_m)$.
Clearly, all $T_{j}$ are integrable (Hamiltonian) diffeomorphisms and close to  $T_{1}$. This completes the proof.
\Endproof

\begin{remark}\label{rem ifs min 3} To apply  Theorem   \ref{thm ifs min} to the proof of the main theorems, it is essential that the number $m$ of generators is finite and independent of $T_1$. However, one may ask about the optimal number $m$ of generators needed to obtain minimality. 
 In fact, one may prove that three generators are enough. To see this, just observe that given $k\in \NN$ and a codimension one submanifold $L$ in $N$, and for a $C^r$ generic diffeomorphism $f\in\diff_w^r(N)$, $f^k$ is transversal to $L$, for any $k\in \NN$. Then, one may easily prove that for a $C^r$ generic $f$, the diffeomorphisms $T_i:=f^{i-2},  \; i=3, \dots, m$, satisfy the transversality conditions in the proof of Theorem \ref{thm ifs min}. We choose $T_{3}=f$ to be close to $T_{1}$. Notice that in this argument we do not assume $T_{3}$ being integrable. \end{remark}
 

Now, we establish a result about transitivity of the IFS of recurrent diffeomorphisms.

\begin{theorem}\label{thm rec}  
Let $T\in \Diff^r_\omega(N)$ be a recurrent diffeomorphism. Then for every $\epsilon>0$,
\begin{enumerate}
\item  there exist $T_1, T_2 \in B_\epsilon(T)\subset \Diff^r_\omega(N)$ such that $\cG(T, T_1, T_2)$ is transitive;

\item  for any open ball $V\subset N$ and any bounded domain $N_c \subset N$, there exist $k\in \NN$ and  $T_1, T_2, \dots, T_k \in B_\epsilon(T)\subset \Diff^r_\omega(N)$ such that $N_c \subset \orbit^-_{\cG}(V)$, where $\cG= \cG(T, T_1, T_2, \dots, T_k)$.
\end{enumerate}

\end{theorem}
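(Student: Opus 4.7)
My plan is to prove Part (2) first and then deduce Part (1). Fix $V$ open and $N_c$ bounded with $\overline{N_c}$ compact. Since $\orbit^-_\cG(V)=\bigcup_\sigma g_\sigma^{-1}(V)$ is open, compactness reduces the problem to a pointwise statement: for every $x\in \overline{N_c}$, produce a word $\sigma_x$ in a common finite alphabet of $\epsilon$-close symplectic perturbations of $T$ with $g_{\sigma_x}(x)\in V$; a finite subcover then picks out $k$ and $T_1,\dots,T_k$. The overall strategy mimics Lemma \ref{lem tori} and Proposition \ref{pro ifs 3}, with the role of invariant tori replaced by the recurrence of $T$: almost every point has an orbit returning arbitrarily close to itself, which gives enough recurrent structure to thread trajectories between open sets.

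The main step is a local connecting statement: for any two small open balls $B,B'$ lying in a common bounded region $W\subset N$ and any $\epsilon>0$, there exist finitely many symplectic $T'_1,\dots,T'_s\in B_\epsilon(T)\cap \diff^r_\w(N)$ and a word $\sigma$ in $\{T,T'_1,\dots,T'_s\}$ such that $g_\sigma(B)\cap B'\ne\emptyset$. To prove it, cover $W$ by small balls $C_1,\dots,C_L$ of radius much smaller than $\epsilon$, and set up a directed transition graph on the $C_\ell$ with edges of two types: \emph{free edges} coming from $T$ whenever $T(C_\ell)\cap C_{\ell'}\ne\emptyset$, and \emph{perturbative edges} obtained by adding to the alphabet a generator $T'_j=\psi\circ T$, where $\psi$ is a $C^r$-small symplectic diffeomorphism supported near $\overline{C_\ell\cup C_{\ell'}}$ pushing $C_\ell$ into $C_{\ell'}$. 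Using the recurrence of $T$ inside $W$ together with the fact that local symplectic bridges $\psi$ can always be constructed between nearby balls, one shows the resulting graph is strongly connected on the cells meeting $W$; a graph-theoretic path from $B$ to $B'$ then translates directly into the required word $\sigma$.

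Part (2) then follows by applying the connecting statement to each pair $(B_i,V)$, where $\{B_i\}$ is a finite cover of $\overline{N_c}$, and collecting all perturbations into one finite alphabet $T_1,\dots,T_k$. For Part (1), one enumerates a countable base of open pairs $(U_m,U_n)$ and combines the connecting statement with a Baire-type genericity argument (in the spirit of the transversality step of Theorem \ref{thm ifs min} and Remark \ref{rem ifs min 3}) to show that two generically chosen $T_1,T_2\in B_\epsilon(T)$ already realize all required transitions, yielding transitivity of $\cG(T,T_1,T_2)$.

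The main obstacle is the connecting statement. In the integrable case the transitions came for free from transversal intersections of invariant tori (Lemma \ref{lem tori}), but here they must be produced by actual perturbations of $T$. The crucial point that makes this possible without a $C^r$ closing or connecting lemma is that each connection is realized inside the semigroup $\cG$, not inside the dynamics of a single diffeomorphism: the $\epsilon$-budget is spent only on the $C^r$-norm of each separate bridge $\psi$, not on any cumulative drift of iterates, so as many small symplectic bridges as needed can be added to the alphabet provided their supports are disjoint.
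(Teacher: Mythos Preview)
Your approach to Part~(2) is plausible in outline, but the crucial step --- ``using the recurrence of $T$ inside $W$ \dots\ one shows the resulting graph is strongly connected'' --- is asserted rather than proved. The edges in your graph go from $C_\ell$ to cells near $T(C_\ell)$, not to cells near $C_\ell$; so to pass from $C_i$ to an adjacent cell $C_{i+1}$ you must first iterate $T$ until the orbit returns near $C_i$ and only then apply a bridge. This can be made to work, but it needs to be spelled out, and the remark about disjoint supports is irrelevant since each bridge is a separate generator.

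The genuine gap is Part~(1). Your plan is to deduce it from Part~(2) by a Baire-type genericity argument showing that two generic $T_1,T_2\in B_\epsilon(T)$ already realize all transitions. No such argument is available: the transversality step in Theorem~\ref{thm ifs min} and Remark~\ref{rem ifs min 3} relies on the integrable structure (the bad set $\cB$ is a countable union of submanifolds of controlled codimension), and nothing analogous holds for a general recurrent $T$. Indeed, the question of whether a $C^r$-generic pair of symplectic diffeomorphisms generates a transitive IFS is precisely Problem~\ref{transitive} in the paper --- posed as open.

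The paper's proof of Part~(1) takes a completely different route and does not pass through Part~(2). It fixes, once and for all, two \emph{integrable} diffeomorphisms $\phi_1,\phi_2$ close to the identity such that $\cG(\phi_1,\phi_2)$ is transitive (Proposition~\ref{pro ifs 3}), and then sets $T_1=T\circ\phi_1$, $T_2=T\circ\phi_2$. The point is a shadowing argument: given a $\cG(\phi_1,\phi_2)$-word $\phi_\Sigma$ taking $z$ into $V$, one replaces each $\phi_{\sigma_j}$ by $T^{l_j}\circ\phi_{\sigma_j}$ with $l_j$ chosen (via recurrence of the intermediate points) so that the inserted $T^{l_j}$ nearly returns to where it started. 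This produces a $\cG(T,T_1,T_2)$-word with the same effect. Part~(2) is then obtained from Part~(1) by a short compactness argument on the nowhere-dense residual set $X=\overline{B_1(N_c)}\setminus\orbit^-_{\cG_2}(V)$. You should either adopt this shadowing idea for Part~(1), or find a genuine substitute for the missing genericity step.
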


\Proof
If $T={\rm id}$, then we choose $\phi_1$ to be an integrable symplectic diffeomorphism on the manifold $N$ such that almost all points are quasi periodic, and $d_{C^r}(\phi_1, {\rm id})<\frac{1}{2}\epsilon$. 
Proposition \ref{pro ifs 3} implies that for any open set $V$ there exists $\phi_2$ in 
$\Diff^r_\omega(N)$ and $\epsilon$-close to the identity in the $C^r$ topology, such that, 
$\orbit^-_{\cG_{\phi}}(V) \cap \orbit^{+}_{\cG_{\phi}}(V)$ is open and dense in $N$, where 
$\cG_{\phi}= \cG(\phi_1, \phi_2)$. In other words, $\cG_{\phi}$ is transitive. This completes the proof of (1) in the case where $T={\rm id}$.

For an arbitrary recurrent $T$, let $\cR$ be the set of recurrent points of $T$, which is also invariant by $\phi_1$ and $\phi_2$. This set is dense. In fact, following an argument similar to the claim in the proof of Proposition  \ref{pro ifs 3}  this set is residual and of total Lebesgue measure.

Let $V$ be an open set in $N$, and $z\in \cR\cap \orbit^-_{\cG_{\phi}}(V)$. This intersection is obviously non-empty. Then, there are $d\in \NN$ and $\Sigma= (\sigma_1, \dots, \sigma_d), \sigma_i=1, 2$, such that 
$$z\in(\phi_{_{\Sigma}})^{-1}(V).$$
Moreover, for any $i=1, 2, \dots, d$, and any $l_j \in \ZZ$, $j=1, 2, \dots, i$,
$$\tilde{z}_i:= (T^{l_i}\circ\phi_{\sigma_i}) \circ \cdots  \circ  (T^{l_1}\circ\phi_{\sigma_1})(z) \in \cR.$$
So, using recurrence, for some (large) $l_j\in \NN$, the orbit $(\tilde{z}_i)_i$ shadows $(z_i)_i$, where $z_i=\phi_{\sigma_i} \circ \cdots  \circ \phi_{\sigma_1}(z)$.
This shows that for some  $l_j\in \NN$, the point $\tilde{z}_d$ belongs to $V$. But $(\tilde{z}_i)_i$ is an orbit of $z$ under the iterated function system of  
$$\cG_2= \cG(T, T\circ\phi_1, T\circ\phi_2).$$
In other words,  $\tilde{z}_d \in V \cap\orbit^{+}_{\cG_2}(z) $. Recall that $\cR\cap \orbit^-_{\cG_{\phi}}(V)$ is dense in $N$. So, the  $\cG_2$-orbit of any point in a dense set intersects $V$. The same is true for backward $\cG_2$-orbits. Thus, $\orbit^{\pm}_{\cG_2}(V)$ is (open and) dense in $N$, and $\cG_2$ is transitive. 
This completes the proof of (1).

Given $N_c \subset\subset N$ bounded, and $V \subset N$ open, 
we let $X= \overline{B_1(N_c)} \setminus \orbit^-_{\cG_2}(V)$. Observe that $X$ is a compact set with empty interior. So for any $x \in X$ there exists $h_x$ in $\Diff^r_\omega(N)$ and $\epsilon$-close to the identity in the $C^r$ topology, such that $h^{-1}_x(x) \in V^{-}:= \orbit^-_{\cG_2}(V)$. Since $V^{-}$ is an open set, there is a neighborhood $U_x$ of $x$ such that $h^{-1}_x(U_x) \subset V^{-}$. The family $\{U_x \}$ is an open cover of the compact set $X$. So there exist $k\in \NN, x_1, x_2, \dots, x_l \in X$ and $h_{x_1}, h_{x_2}, \dots, h_{x_k} \in B_{\epsilon}({\rm id}) \subset \Diff^r_\omega(N)$ such that
$$X \cap h^{-1}_{x_1}(X) \cap \cdots \cap h^{-1}_{x_k}(X)=\varnothing.$$

Thus  
$$T^{-1}(X) \cap (h_{x_1}\circ T)^{-1}(X) \cap \cdots \cap (h_{x_k}\circ T)^{-1}(X)=\varnothing.$$

Therefore, 
$$N_c \subset \subset T^{-1}(V^-) \cap (h_{x_1} \circ T)^{-1}(V^-) \cap \cdots \cap (h_{x_k}\circ T)^{-1}(V^-).$$

If we define $$\cG:=\cG(T, T\circ \phi_1, T\circ \phi_2, h_{x_1}\circ T, \dots, h_{x_k}\circ T),$$ 
then we have that
$N_c \subset \subset \orbit^-_{\cG}(V).$
\Endproof

\subsection{Skew products and IFS} \label{s skew & ifs}
In this section we explain the relation between iterated function systems
and skew products over shifts. This relation have been used extensively in random dynamical systems (cf. \cite{la}). Similar approach to the one we present here has been used by R. Moeckel in \cite{mo} where he studies the IFS of monotone  twist maps on annulus, relating it to the instability (drift) for skew product of such maps over shift. 

Here,  we state a series of propositions (see Propositions \ref{pro skew 3} and \ref{pro skew 4}) that describe  the trajectories of the action of a semigroup of diffeomorphisms as the dynamics of the unstable sets of an appropriate skew product over shifts.

Let $\tau$ be the full shift with $d$ symbols.
\begin{equation*}
\begin{aligned}
\tau:d^\ZZ & \to d^\ZZ    \\ 
x=(\dots,x_{-1},x_0;x_1,\dots) &\mapsto  (\dots,   x_0,x_1;x_2,\dots)
\end{aligned}
\end{equation*}
It is natural to define the local and global unstable manifolds of 
a point $x\in d^\ZZ$ for $\tau$ as the following
$$W^{u}_{loc}(x ;\tau)=\{(z_i)\mid \forall i\leq 0, z_i=x_i \}$$
$$W^{u}(x;\tau)
=\bigcup_{i\geq0} \tau^i(W^{u}_{loc}(\tau^{-i}(x) ;\tau))
=\{(z_i)\mid \exists i_0\in\mathbb{Z},\forall i\leq i_0, z_i=x_i\}.$$

Let $\Phi: d^\ZZ \x Y \to d^\ZZ \x Y$ be a skew product such that 
$$\Phi(x,y)= (\tau(x), \phi_{x}(y)),$$
where  $\phi_x$ is a homeomorphism on $Y$,
for any $x\in d^\ZZ$. 
Assume that the family of $\phi_x$'s are uniformly bi-Lipschitz, i.e., there exists 
$ L>1$ such that $ \forall x \in d^{\ZZ},~ \forall y, y' \in Y,$ 
$$  \frac{1}{L} {\rm dist}_Y(y, y') \leq {\rm dist}_Y( \phi_{x}(y), \phi_{x}(y')) \leq L  \cdot {\rm dist}_Y(y, y')$$ and let us assume that the Lipschitz constant varies continuously with respect to $x$.

We consider the following metric on $d^{\ZZ}$,
$${\rm dist}(x,z)=\sum_{i\in \ZZ} e^{-|i|L}|x_{i}-z_{i}| . $$
One may define the strong unstable manifold as follows:
$$W^{uu}(x,y;\Phi):=\{(a,b) \mid {\rm dist}(\Phi^{i}(x,y), \Phi^{i}(a,b) ) \sim \exp(i L) ~{\rm as}~ i\to-\infty\}.$$

Assume that $\phi_{x}$ depends only on $[x_{_{\leq i_{0}}}]:=(\dots,x_{i_{0}-1},x_{i_{0}})$, and denote it by  $\phi_{[x_{_{\leq i_{0}}}]}$. To avoid complications we also assume that $i_{0}=0$. 

Therefore, $\Phi$ is the product $\tau\x \phi_{x}$  on the set 
$\{z\in d^\ZZ \mid z_{i}=x_{i}, i\leq0 \} \x Y$. So, the local unstable manifold of
$(x,y)$ for $\Phi$ contains $W^{u}_{loc}(x ;\tau)\x\{y\}$. 
Then we have proved the following proposition.

\begin{proposition} \label{pro skew 1}
For any $(x,y)\in d^\ZZ \x Y$ and $n\in\NN$,
\begin{equation*}
\begin{aligned}
\Phi^n(x,y)= &(\tau^n(x),\phi_{[x_{_{\leq n-1}}]}\circ \cdots \circ \phi_{[x_{_{\leq 0}}]}(y)),\\
\Phi^{-n}(x,y) =& 
(\tau^{-n}(x),\phi^{-1}_{[x_{_{\leq-n}}]}\circ \cdots \circ \phi^{-1}_{[x_{_{\leq-1}}]}(y)).\\
W^{uu}_{loc}(x,y ;\Phi)= & W^{u}_{loc}(x ;\tau)\x\{y\}
= \{(z_i)\mid \forall i\leq 0, z_i=x_i\}\x\{y\},\\
W^{uu}(x,y;\Phi)= & \bigcup_{i\geq0}
\Phi^{i}(W^{uu}_{loc}(\Phi^{-i}(x,y);\Phi)).
\end{aligned}
\end{equation*}
\end{proposition}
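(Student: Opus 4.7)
The plan is to establish all four claims as direct consequences of unpacking the definitions, using crucially that the fiber map $\phi_x$ depends only on the past block $[x_{\leq 0}]=(\ldots,x_{-1},x_0)$.

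First I would prove the iteration formulas by induction on $n$. The base case is the definition $\Phi(x,y)=(\tau(x),\phi_{[x_{\leq 0}]}(y))$. Granted the formula for $\Phi^n(x,y)$, one more application of $\Phi$ uses the fiber map indexed by $[\tau^n(x)_{\leq 0}]=[x_{\leq n}]$, which is precisely the next factor in the composition. The backward formula follows identically, starting from
$$\Phi^{-1}(x,y)=\bigl(\tau^{-1}(x),\phi^{-1}_{[\tau^{-1}(x)_{\leq 0}]}(y)\bigr)=\bigl(\tau^{-1}(x),\phi^{-1}_{[x_{\leq -1}]}(y)\bigr).$$

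Second, for the identity $W^{uu}_{loc}(x,y;\Phi)=W^u_{loc}(x;\tau)\times\{y\}$, I would argue both inclusions directly from the backward iteration formula. If $(z,y)$ satisfies $z_i=x_i$ for all $i\leq 0$, then the blocks $[z_{\leq -k}]$ and $[x_{\leq -k}]$ coincide for every $k\geq 1$, so the $Y$-coordinates of $\Phi^{-n}(z,y)$ and $\Phi^{-n}(x,y)$ agree identically for all $n\geq 0$; on the shift factor, a direct computation with the given metric yields
$${\rm dist}(\tau^{-n}(x),\tau^{-n}(z))=\sum_{j>0}e^{-(j+n)L}|x_j-z_j|\leq e^{-nL}\,{\rm dist}(x,z),$$
which matches the decay rate $\exp(iL)$ as $i\to -\infty$ demanded by the definition of $W^{uu}$. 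For the reverse inclusion I would use that $\Phi$ preserves the product foliation with leaves $\{z:z_i=x_i\text{ for }i\leq 0\}\times\{y\}$ (precisely because $\phi_x$ only depends on $[x_{\leq 0}]$), that this foliation realizes the maximal exponential contraction under backward iteration, and that any other pair $(a,b)$ either has shift distance that does not contract at the required rate, or, if $a\in W^u_{loc}(x;\tau)$ but $b\neq y$, violates the required decay because the uniform bi-Lipschitz bound gives
$${\rm dist}_Y\bigl(\Phi^{-n}(x,y)_Y,\Phi^{-n}(a,b)_Y\bigr)\geq L^{-n}{\rm dist}_Y(y,b),$$
which is incompatible with rate $e^{-nL}$ unless $b=y$.

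Finally, assertion (4) is the standard saturation of the local strong unstable leaves by positive iteration: any point whose backward orbit is exponentially close to that of $(x,y)$ must, after sufficiently many backward iterations, lie in the local strong unstable leaf of the corresponding iterate of $(x,y)$, so it is the forward image of such a point; conversely every point in the right-hand union has backward orbit asymptotic at the required rate to that of $(x,y)$. I expect the only delicate step to be the reverse inclusion in (3), which hinges on matching the Lipschitz constant $L$ of the fiber dynamics to the exponential rate $\exp(iL)$ used in the definition of $W^{uu}$ and in the metric on $d^\ZZ$; the remaining parts reduce to routine manipulations of the iteration formulas.
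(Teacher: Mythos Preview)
Your proposal is correct and follows the same approach as the paper: both treat the proposition as an immediate unpacking of the definitions, the key observation being that $\Phi$ acts as the product $\tau\times\phi_x$ on each set $W^u_{loc}(x;\tau)\times Y$ because $\phi_x$ depends only on $[x_{\leq 0}]$. The paper's own argument is in fact much terser than yours---it records only that observation and then declares the proposition proved---so your added detail on the iteration formulas, the reverse inclusion in (3), and the saturation formula (4) goes beyond what the paper supplies.
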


Since $W^{uu}_{loc}(x,y;\Phi)$ is a product set and $\Phi$ is a product on it, so 
$\Phi^{i}(W^{uu}_{loc}(x,y;\Phi))$ is a finite union of some local strong unstable manifolds. Therefore, we have proved the following.

\begin{proposition} \label{pro skew 2}
For any $(x,y)\in d^\ZZ \x Y$, the global strong unstable manifold $W^{uu}_{loc}(x,y;\Phi)$ 
is a countable union of some local unstable manifolds $W^{uu}_{loc}((x^i,y^i);\Phi)$.
\end{proposition}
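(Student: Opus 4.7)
The plan is to use the decomposition of the global strong unstable manifold furnished by Proposition~\ref{pro skew 1}, namely
\[
W^{uu}(x,y;\Phi)=\bigcup_{i\geq 0}\Phi^{i}\bigl(W^{uu}_{loc}(\Phi^{-i}(x,y);\Phi)\bigr),
\]
and to reduce the claim to showing that each forward iterate $\Phi^{i}(W^{uu}_{loc}(z;\Phi))$ is a \emph{finite} union of local strong unstable manifolds. A countable union of finite unions being countable, this will finish the proof.

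The key observation is that on a local unstable manifold the skew dynamics is in fact a product. Indeed, if $z=(z_{1},z_{2})\in d^{\ZZ}\times Y$, then $W^{uu}_{loc}(z;\Phi)=W^{u}_{loc}(z_{1};\tau)\times\{z_{2}\}$, and for any $w\in W^{u}_{loc}(z_{1};\tau)$ we have $w_{i}=(z_{1})_{i}$ for all $i\leq 0$. Since $\phi_{w}$ depends only on $[w_{\leq 0}]$, it follows that $\phi_{w}=\phi_{z_{1}}$ on the whole local unstable manifold; hence
\[
\Phi(W^{uu}_{loc}(z;\Phi))=\tau(W^{u}_{loc}(z_{1};\tau))\times\{\phi_{z_{1}}(z_{2})\}.
\]
The set $\tau(W^{u}_{loc}(z_{1};\tau))=\{w'\in d^{\ZZ}\mid w'_{i}=(z_{1})_{i+1},\ i\leq -1\}$ decomposes according to the free coordinate $w'_{0}\in\{1,\dots,d\}$ into a disjoint union of $d$ sets of the form $\{w'\mid w'_{i}=\tilde z_{i},\ i\leq 0\}=W^{u}_{loc}(\tilde z;\tau)$. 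Therefore $\Phi(W^{uu}_{loc}(z;\Phi))$ is a disjoint union of $d$ local strong unstable manifolds of $\Phi$.

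By straightforward induction on $i$, $\Phi^{i}(W^{uu}_{loc}(z;\Phi))$ is a disjoint union of $d^{i}$ local strong unstable manifolds (each iteration refines the partition by splitting according to the next freshly exposed coordinate $w'_{0}$, while the fiber coordinate is updated deterministically by $\phi_{[\,\cdot\,]}$). Substituting this in the initial decomposition yields
\[
W^{uu}(x,y;\Phi)=\bigcup_{i\geq 0}\bigsqcup_{k=1}^{d^{i}} W^{uu}_{loc}((x^{i,k},y^{i,k});\Phi),
\]
which is a countable union of local strong unstable manifolds, as claimed. There is no real obstacle here: the argument is essentially a bookkeeping exercise based on the formulas in Proposition~\ref{pro skew 1}; the only point requiring care is the identification of each $\tau$-image cylinder with a local unstable manifold, which is immediate once one recognises that the shift exposes one new past coordinate at every step.
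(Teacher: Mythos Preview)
Your proof is correct and follows essentially the same approach as the paper. The paper's argument is the one-sentence observation preceding the proposition statement: since $W^{uu}_{loc}(x,y;\Phi)$ is a product set on which $\Phi$ acts as a product, each $\Phi^{i}(W^{uu}_{loc}(x,y;\Phi))$ is a finite union of local strong unstable manifolds, and combining this with the decomposition from Proposition~\ref{pro skew 1} gives the result; your write-up simply makes these steps explicit (in particular the splitting of $\tau(W^{u}_{loc})$ into $d$ cylinders and the induction).
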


\subsubsection*{Locally constant skew products}
From now on we assume that
 $\phi_{x}$ depends only on $x_{0}$, where $x=(\dots,x_{-1},x_0;x_1,\dots)$.
Then, $\Phi=\tau\x\phi_{j}$ on the set
$\{z\in d^\ZZ \mid z_{0}=j \} \x Y$, for any $j\in\{1, 2, \dots, d\}$.
The next propositions shed some light on the relation between (locally constant) skew products over shifts and iterated function systems.

Let $\cG=\cG(\phi_1,\phi_2, \dots,\phi_d)$. 
Proposition \ref{pro skew 1} implies that for any $(x,y)\in d^\ZZ \x Y$
and $n\in\NN$,
$$\Phi^n(x,y)=(\tau^n(x),\phi_{x_{n-1}}\circ \cdots \circ \phi_{x_0}(y)).$$
This shows that  by taking different base points $x$, one can realize the orbit of $y$ under the IFS $\cG$. 
Since the skew product $\Phi$ does not depend on $x_{i}, ~ i>0$, so we get the entire positive $\cG$-orbit of $y$ by taking all points on $W^{u}_{loc}(x ;\tau)$. 
So, we have obtained the following proposition.

\begin{proposition} \label{pro skew 3}
For any $(x,y)\in d^\ZZ \x Y$, the projection of $\bigcup_{n>0}
\Phi^{n}(W^{uu}_{loc}(x,y;\Phi))$
on $Y$ is equal to 
$\orbit^+_\cG(\phi_{x_0}(y)).$
In particular, if $(x,y)$ is a fixed point of $\Phi$, then the projection of $W^{uu}(x,y;\Phi)$ on $Y$ is equal to $\orbit^+_\cG(y).$
\end{proposition}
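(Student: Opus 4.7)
The plan is to derive this directly from Proposition \ref{pro skew 1}, which already gives the explicit formulas for iterates of $\Phi$ and for the local strong unstable manifold, together with the hypothesis that $\phi_x$ depends only on $x_0$.

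First, I would fix $(x,y)$ and parametrize an arbitrary point of $W^{uu}_{loc}(x,y;\Phi)=W^{u}_{loc}(x;\tau)\x\{y\}$ as $(z,y)$ where $z=(\ldots,z_{-1},z_0;z_1,z_2,\ldots)$ satisfies $z_i=x_i$ for all $i\leq 0$ (in particular $z_0=x_0$), while $z_1,z_2,\ldots\in\{1,\ldots,d\}$ are arbitrary. By Proposition \ref{pro skew 1},
\[
\Phi^n(z,y)=\bigl(\tau^n(z),\; \phi_{z_{n-1}}\!\circ\cdots\circ\phi_{z_0}(y)\bigr).
\]
Projecting on $Y$ and using $z_0=x_0$, the image becomes $\phi_{z_{n-1}}\!\circ\cdots\circ\phi_{z_1}\bigl(\phi_{x_0}(y)\bigr)$. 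As $(z_1,\ldots,z_{n-1})$ ranges over $\{1,\ldots,d\}^{n-1}$ (which it does freely, since the coordinates of $z$ with positive index are unconstrained in $W^{u}_{loc}(x;\tau)$), one obtains precisely the set of words of length $n-1$ in the generators $\phi_1,\ldots,\phi_d$ applied to $\phi_{x_0}(y)$.

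Next, I would take the union over $n>0$. This yields exactly the set $\{g_\sigma(\phi_{x_0}(y)) : |\sigma|\geq 0\}$, which by definition is $\orbit^+_{\cG}(\phi_{x_0}(y))$ (one should note that the $n=1$ case contributes the base point $\phi_{x_0}(y)$ itself, matching the convention of the orbit set used earlier in the section). The reverse inclusion is immediate from the same formula: given any composition $\phi_{\sigma_k}\circ\cdots\circ\phi_{\sigma_1}\circ\phi_{x_0}(y)$, choose $z\in W^{u}_{loc}(x;\tau)$ with $z_j=\sigma_j$ for $1\leq j\leq k$ (arbitrary beyond), and then $\Phi^{k+1}(z,y)$ projects to the desired point.

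For the ``in particular'' statement, if $(x,y)$ is a fixed point of $\Phi$, then $\tau(x)=x$ forces $x$ to be the constant sequence on some symbol $j$, so $x_0=j$ and $\phi_j(y)=y$; hence $\phi_{x_0}(y)=y$. Moreover, $\Phi^{-i}(x,y)=(x,y)$, so from the definition $W^{uu}(x,y;\Phi)=\bigcup_{i\geq 0}\Phi^i\bigl(W^{uu}_{loc}(x,y;\Phi)\bigr)$ the main statement immediately yields that its projection on $Y$ equals $\orbit^+_{\cG}(y)$.

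The proof is essentially bookkeeping on multi-indices, so there is no real obstacle; the only delicate points are the convention for whether the base point belongs to the orbit (handled transparently by including $n=1$, which contributes $\phi_{x_0}(y)$), and the need to verify that the positive coordinates $z_1,z_2,\ldots$ are indeed unconstrained inside $W^{u}_{loc}(x;\tau)$---which is built into its very definition.
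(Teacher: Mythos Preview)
Your proof is correct and follows essentially the same approach as the paper: the paper derives the proposition in the paragraph immediately preceding its statement by invoking Proposition~\ref{pro skew 1} for the iterate formula $\Phi^n(z,y)=(\tau^n(z),\phi_{z_{n-1}}\circ\cdots\circ\phi_{z_0}(y))$ and then observing that the positive-index coordinates of $z\in W^{u}_{loc}(x;\tau)$ are free, which is precisely your argument carried out with more care on the bookkeeping.
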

As we said, this proposition turns out to be very useful in the study of dynamical
properties of strong stable/unstable manifolds of certain partially
hyperbolic skew product systems. For instance, to obtain the density of the strong
unstable manifolds (see Section \ref{s min thm A}).  More precisely
let us mention the geometric meaning of this fact: 
by each iteration the length of $W^{uu}_{loc}(x,y;\Phi)$ grows exponentially and it intersects the domain of all $\phi_i$'s. Therefore, all possible compositions of $\phi_i$'s do appear in the positive orbit of $W^{uu}_{loc}(x,y;\Phi)$.  Indeed, we have the following proposition which gives a precise description of the global strong unstable manifolds for $\Phi$. 

\begin{proposition} \label{pro skew 4}
For any $(x,y)\in d^\ZZ \x Y$,
$$W^{uu}(x,y;\Phi)= \bigcup_{\sigma\in\Sigma}
W^{uu}_{loc}(x^\sigma,\phi^{x,\sigma}(y);\Phi),$$
where  \vspace{-.2cm} 
$$\Sigma=\{\sigma=(\sigma_1, \dots, \sigma_n)\mid  n\in\NN, 1\leq\sigma_i\leq d\},$$
$$\phi^{x,\sigma}=\phi_{\sigma_{n-1}}\circ \cdots \circ \phi_{\sigma_{1}}\circ 
\phi^{-1}_{x_{-n+1}}\circ  \cdots \circ \phi^{-1}_{x_{-1}}  \andrm  \phi^{x,(\sigma_1)}={\rm id},$$
$$x^\sigma=(\dots, x_{-n},\sigma_1, \dots, \sigma_n; x_1,\dots).$$
\end{proposition}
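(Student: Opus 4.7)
The plan is to start from Proposition \ref{pro skew 1}'s iterative description
$$W^{uu}(x,y;\Phi) = \bigcup_{n \geq 0} \Phi^n\bigl(W^{uu}_{loc}(\Phi^{-n}(x,y); \Phi)\bigr),$$
and, for each fixed $n$, rewrite the $n$-th piece as the union of local strong unstable manifolds $W^{uu}_{loc}(x^\sigma, \phi^{x,\sigma}(y); \Phi)$ over all words $\sigma$ of length $n$. Proposition \ref{pro skew 2} already guarantees that the image is a countable union of local pieces, so the whole task reduces to identifying their base points.

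First I would unpack the right-hand side for each $n$. By Proposition \ref{pro skew 1} combined with the locally constant hypothesis, $\Phi^{-n}(x,y) = (\tau^{-n}x, y_n)$ with $y_n := \phi^{-1}_{x_{-n}} \circ \cdots \circ \phi^{-1}_{x_{-1}}(y)$, so $W^{uu}_{loc}(\Phi^{-n}(x,y); \Phi)$ is parametrised by free symbols $w_i := z_i \in \{1,\dots,d\}$ for $i \geq 1$, with $z_i = x_{i-n}$ for $i \leq 0$. Applying $\Phi^n$ and setting $\sigma := (w_1, \ldots, w_n)$, the shift part $\tau^n(z)$ has entries $x_i$ for $i \leq -n$, $\sigma_{i+n}$ for $-n+1 \leq i \leq 0$, and the arbitrary values $w_{n+1}, w_{n+2}, \ldots$ for $i \geq 1$. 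Comparing with the definition of $x^\sigma$, this is exactly the shift part of $W^{uu}_{loc}(x^\sigma, \cdot; \Phi)$.

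Next I would handle the $Y$-coordinate. The composition $\phi_{z_{n-1}} \circ \cdots \circ \phi_{z_0}(y_n)$ furnished by Proposition \ref{pro skew 1} becomes $\phi_{\sigma_{n-1}} \circ \cdots \circ \phi_{\sigma_1} \circ \phi_{x_{-n}}(y_n)$, since $z_0 = x_{-n}$ while $z_j = \sigma_j$ for $1 \leq j \leq n-1$. The outermost $\phi_{x_{-n}}$ telescopes with the leftmost factor of $y_n$, leaving
$$\phi_{\sigma_{n-1}} \circ \cdots \circ \phi_{\sigma_1} \circ \phi^{-1}_{x_{-n+1}} \circ \cdots \circ \phi^{-1}_{x_{-1}}(y) = \phi^{x,\sigma}(y),$$
matching the definition in the statement (with the convention $\phi^{x,(\sigma_1)} = \mathrm{id}$ covering $n=1$).

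Finally, letting the remaining coordinates $w_{n+1}, w_{n+2}, \ldots$ vary over $\{1,\dots,d\}^{\NN}$ fills exactly $W^{uu}_{loc}(x^\sigma, \phi^{x,\sigma}(y); \Phi)$; letting $\sigma$ range over $\{1,\dots,d\}^n$ and then $n$ range over $\NN$ exhausts $\Sigma$ (the case $n=0$ is absorbed into $n=1$ via $\sigma_1 = x_0$, which gives $x^{(x_0)} = x$ and $\phi^{x,(x_0)} = \mathrm{id}$). Taking the union over $n$ produces the claimed equality. The only delicate step is keeping the index bookkeeping consistent --- in particular, the telescoping cancellation $\phi_{z_0} \circ \phi^{-1}_{x_{-n}} = \mathrm{id}$ and the relabeling $(x^\sigma)_i = \sigma_{i+n}$ for $-n+1 \leq i \leq 0$ --- but this is mechanical once the parametrization of the free coordinates is fixed.
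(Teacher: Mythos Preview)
Your proof is correct and follows essentially the same route as the paper: both start from the iterative description $W^{uu}(x,y;\Phi)=\bigcup_{n\geq 0}\Phi^{n}(W^{uu}_{loc}(\Phi^{-n}(x,y);\Phi))$, parametrize the free coordinates of each local piece, and identify the $Y$-coordinate by the same telescoping cancellation $\phi_{z_0}\circ\phi^{-1}_{x_{-n}}=\mathrm{id}$. The only cosmetic differences are that the paper treats $n=1$ separately as a warm-up and uses the symbols $a,\eta$ where you use $z,w,\sigma$; your handling of the $n=0$ term via $\sigma_1=x_0$ is a nice touch that the paper leaves implicit.
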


\Proof
It is easy to see that for  any $\sigma, \sigma' \in\Sigma$, 
$W^{u}_{loc}(x^{\sigma};\tau)=W^{u}_{loc}(x^{\sigma'} ;\tau)$
if and only if $\sigma=\sigma'$. Moreover, $\tau^n(x^{\sigma})\in W^{u}_{loc}(x ;\tau)$ if $\sigma=(\sigma_1, \dots, \sigma_n)$.
Therefore,
$$W^{u}(x;\tau)=\bigcup_{\sigma\in\Sigma} W^{u}_{loc}(x^{\sigma};\tau).$$

On the other hand, the projection of $W^{uu}(x,y;\Phi)$
on $d^\ZZ$ is equal to  $W^{u}(x ;\tau)$, since $\Phi$ is skew
product.

$$W^{uu}(x,y;\Phi)=\bigcup_{n\geq0}
\Phi^{n}(W^{uu}_{loc}(\Phi^{-n}(x,y);\Phi)).$$

\begin{equation*}
\begin{aligned}
\Phi(W^{uu}_{loc}(\Phi^{-1}(x,y);\Phi)) &
=\Phi(W^{u}_{loc}(\tau^{-1}(x);\tau)\x\{\phi^{-1}_{x_{-1}}(y)\}) \\ &
=(\tau\x \phi_{x_{-1}})(W^{u}_{loc}(\tau^{-1}(x);\tau)\x\{\phi^{-1}_{x_{-1}}(y)\}) \\ &
= \tau(W^{u}_{loc}(\tau^{-1}(x);\tau))\x\{y\}  \\ &
=\bigcup_{|\sigma|=1}W^{uu}_{loc}(x^\sigma,y;\Phi).
\end{aligned}
\end{equation*}

From the definition of global unstable manifolds and Proposition \ref{pro skew 1} it follows that for any $n\in\NN$,
\begin{equation*}
\begin{aligned}
\Phi^{n}(W^{uu}_{loc}(p,q;\Phi)) &
= \bigcup_{a\in W^{u}_{loc}(p;\tau)}\{(\tau^{n}(a), \phi_{a_{n-1}}\circ \cdots \circ \phi_{a_0}(q))\} \\ &
= \bigcup_{\substack{\eta=(p_{0},a_{1}, \dots, a_{n-1}) \\ 
a= (...,p_{0};a_{1},a_{2},...)} }
\{(\tau^{n}(a),  \phi_\eta(q))\}
\end{aligned}
\end{equation*}
Now let $(p,q)=\Phi^{-n}(x,y)$, then $p_{i}=x_{i-n}, ~ (\forall i\in\ZZ)$, and 
$q=\phi^{-1}_{x_{-n}}\circ \cdots \circ \phi^{-1}_{x_{-1}}(y) $.
Thus, 
$\tau^{n}(a)=(...,p_{0},a_{1},\dots,a_{n};a_{>n})=(...,x_{-n},a_{1}, \dots ,a_{n};a_{>n})$ and  $ \eta=(x_{-n},a_{1}, \dots, a_{n-1})$.
Therefore,
\begin{equation*}
\begin{aligned}
\phi_\eta(q) &
=\phi_{\eta_{n}}\circ \cdots \circ \phi_{\eta_{1}}\circ 
\phi^{-1}_{x_{-n}}\circ  \cdots \circ \phi^{-1}_{x_{-1}}(y)  \\ &
=\phi_{a_{n-1}}\circ \cdots \circ \phi_{a_{1}}\circ \phi_{x_{-n}}\circ \phi^{-1}_{x_{-n}} \circ
\phi^{-1}_{x_{-n+1}}\circ  \cdots \circ \phi^{-1}_{x_{-1}}(y) \\&
=\phi_{a_{n-1}}\circ \cdots \circ \phi_{a_{1}}\circ 
\phi^{-1}_{x_{-n+1}}\circ  \cdots \circ \phi^{-1}_{x_{-1}}(y).
\end{aligned}
\end{equation*}

It yields that,
\begin{equation*}
\begin{aligned}
\Phi^n(W^{uu}_{loc}(\Phi^{-n}(x,y);\Phi)) &
= \bigcup_{\substack{\eta=(p_{0},a_{1}, \dots, a_{n-1}) \\
a= (...,p_{0};a_{1},a_{2},...)} }
\{(\tau^{n}(a),  \phi_\eta(q))\} \\ &
= \bigcup_{|\sigma|=n}W^{uu}_{loc}(x^\sigma,\phi^{x,\sigma}(y);\Phi).
\end{aligned}
\end{equation*}
This completes the proof.
\Endproof

The stable manifolds of these maps are defined as the unstable manifolds
of corresponding inverse maps. Similar results hold for stable manifolds.

\section{Blenders, double blenders and symplectic blenders} \label{sec blender}

The definition, existence and properties of blenders,  double blenders and symplectic blenders are discussed in this section.

Bonatti and D{\'\i}az in \cite{bd} introduced the notion of blender, which is a geometric model for certain hyperbolic sets that play an important role as a mechanism for creation of cycles and a semi-local source of transitivity. Although their methods may be easily modified for the conservative case, the symplectic one is more delicate. 

In \cite{bd} a $cu$-blender, roughly speaking, is a hyperbolic (locally maximal) invariant set with a splitting of the form $ E^{ss} \oplus E^{u} \oplus E^{uu} $, dim$E^{u}=1 $, such that a convenient projection of its stable set has larger topological dimension than the dimension of the subbundle (cf. Lemma \ref{lem magic}). This phenomenon is robust in the $C^{1}$ topology. Similarly, one may define a $cs$-blender.

Their constructions  essentially use a hyperbolic set with a
one-dimensional weakly hyperbolic subbundle.
On the other hand, to apply this local tool for systems with higher dimensional central bundles they use a chain of blenders with one-dimensional central bundles with different indices (i.e. dimension of the stable bundle) connected to each other. This allows them to use such blenders in more general situations. This approach,  of course, is impossible in the symplectic case, since all eigenvalues are pairwise conjugate and so all hyperbolic periodic points have the same index. \textit{So, to build blenders in the symplectic case, we need to involve higher central dimensions.} We construct a new class of such blenders in the symplectic (or Hamiltonian) systems that works like a chain of $cs$-blenders and a chain of $cu$-blenders simultaneously.

In Section \ref{s symbo blender}, regardless of the symplectic case, we give the definition of $cs$-blenders and we study an abstract model for blenders with arbitrary central dimensions. Using the inverse map, we can  define the $cu$-blenders (see Section \ref{s cu-blender}). For their construction we use the results in Section \ref{s skew & ifs} and the ones about contracting (expanding) IFS stated in Section \ref{s ifs contract}. Then, in Section \ref{s geometric cs-blender}  we consider a geometric model of $cs$-blender and prove in Section \ref{s geometric cs blender are cs blender} that their main feature of being a blender is a robust property. To do that, first we recast the hypothesis of Proposition \ref{pro ifs 2} for IFS  in terms of properties of the geometric model of $cs$-blender (see Section \ref{s properties cs-blender}).

In Section \ref{s double blender}, we consider the case where the central bundle 
splits into two stable and unstable subbundles, that is, the maximal invariant
set is hyperbolic of the form 
$ E^{ss} \oplus E^{s} \oplus E^{u} \oplus E^{uu}$. We call it \textit{double-blender}.
Then we introduce an abstract model which exhibits the feature of 
both $cu$- and $cs$-blenders (see Section \ref{s symbo double blender}). Note that this case is very compatible with the symplectic case where the eigenvalues of periodic points are pairwise conjugate. 

In Section \ref{s blender}, we introduce the above phenomenon in the context of symplectic diffeomorphisms. That is what we call \textit{symplectic blender}. Moreover, in Theorem \ref{thm blender} we show how the symplectic blenders appear in the context of Theorem \ref{thm A} .  In Section \ref{blender in ph} we give a series of results about the role of blenders inside partially hyperbolic sets. These results are essential in the proof of our mains theorems.


\subsection{The $cs$-blenders}\label{s cs-blender}
In this section we introduce the definition of $cs$-blender.

\begin{definition} ($s$-strip)
Let $F$ be a diffeomorphism on the manifold $M$.
Let $\cB$ be an open embedded ball  with three cone-fields $\mathcal{C}^{ss}$, 
$\mathcal{C}^{s}$, $\mathcal{C}^{u}$, invariant under the derivative $DF$ defined in a compact neighborhood of $\cB.$ Observe that  the cone fields induce three invariant subbundles $E^{ss}, E^s, E^u$.
A horizontal strip (or $s$-strip) is an embedded ($s+ss$)-dimensional disk in $\cB$ tangent to $E^{ss}\oplus E^s$ and  
which contains the $ss$-leaves of each its points. 
\end{definition}

\begin{definition}[ $cs$-blender] \label{def cs-blender}
The pair  $(P,\cB)$ is a $cs$-blender for the diffeomorphism $F$ if it satisfies
the following features:
\begin{list}{\textbf{B}-\arabic{Lcount} $\;$}{\usecounter{Lcount}}
\item $P$ is a hyperbolic saddle periodic point of $F$ contained in $\cB$;
\item $\cB$ is an open embedded ball on which there are three hyperbolic cone fields $\mathcal{C}^{ss}$, 
$\mathcal{C}^{s}$ and $\mathcal{C}^{u}$ invariant under the derivative $DF$.
\item Any $G$ sufficiently close to $F$ in the $C^1$ topology verifies that any $s$-strip in $\cB$ intersects  the unstable manifold of $P_G$ whose backward orbit is in $\cB$. Here $P_G$ is the continuation of $P$. 
\end{list}
\end{definition}

From the definition of $cs$-blender  follows immediately the next lemma (the proof is left to the reader).
\begin{lemma}\label{lem cs-blender def} Let  $(P,\cB)$ be a $cs$-blender. Let $B_G=\bigcap_{n\in \ZZ}G^n(closure(\cB))$ for $G$ close to $F.$ Then it follows that $B_G$ is a hyperbolic set such that for any 
$s$-strip $S$ in $\cB$ there is $x\in B_G$ such that $W^u_{loc}(x)\cap \cB\neq \vazio$, where $ W^u_{loc}(x)$ denote the local unstable manifold of $x.$
 
\end{lemma}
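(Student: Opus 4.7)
The plan is to unwind the two clauses of the conclusion separately, since both follow essentially directly from the hypotheses once the right objects are identified. For the hyperbolicity, I will rely on the standard cone-field criterion; for the intersection property I will simply invoke \textbf{B}-3 at the point $P_G$.

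First, I would verify that $B_G=\bigcap_{n\in\ZZ}G^n(\overline{\cB})$ is hyperbolic. By \textbf{B}-2 the cone fields $\mathcal{C}^{ss},\mathcal{C}^{s},\mathcal{C}^{u}$ are $DF$-invariant on a compact neighborhood $K$ of $\cB$ and uniformly expand vectors in $\mathcal{C}^u$ and uniformly contract vectors in $\mathcal{C}^{ss}\oplus\mathcal{C}^s$ (this is what I mean by "hyperbolic cone fields"; the strict invariance and hyperbolicity of the cones is an open $C^1$ condition). Hence the same cone fields, with slightly relaxed opening and slightly weaker expansion/contraction, remain $DG$-invariant and hyperbolic on $K$ for every $G$ sufficiently $C^1$-close to $F$. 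Applying the usual cone-field criterion to $B_G\subset K$, one obtains a continuous $DG$-invariant splitting
\[
T_{B_G}M = E^{ss}\oplus E^{s}\oplus E^{u},
\]
with $E^{ss}\oplus E^{s}$ uniformly contracted and $E^{u}$ uniformly expanded by $DG$. Therefore $B_G$ is hyperbolic, with stable bundle $E^{ss}\oplus E^{s}$ and unstable bundle $E^{u}$.

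Next, for the $s$-strip property, fix an $s$-strip $S\subset \cB$ (read as $W^u_{loc}(x)\cap S\neq\vazio$, which is the natural statement, since $W^u_{loc}(x)\cap\cB\neq\vazio$ is automatic once $x\in B_G\subset\cB$). Apply \textbf{B}-3 of Definition \ref{def cs-blender}: there exists $y\in S$ with $y\in W^u(P_G)$ and with $G^{-n}(y)\in\cB$ for every $n\geq 0$. Since $P_G$ is a hyperbolic periodic point of $G$ lying in $\cB$ together with its whole orbit, $P_G\in B_G$. On the other hand, $y\in W^u(P_G)$ gives $G^{-n}(y)\to \orbit(P_G)$, and the assumption that $G^{-n}(y)\in\overline{\cB}$ for every $n\geq 0$ places this whole backward half-orbit inside the maximal invariant set's basin, so that $y$ belongs to the branch of $W^u(P_G)$ cut out by $\overline{\cB}$, which is precisely the local unstable manifold $W^u_{loc}(P_G)$ provided by the hyperbolic-set invariant-manifold theorem applied to $B_G$ (after, if necessary, replacing $n$ by $np$ where $p$ is the period, to absorb the permutation along the periodic orbit).

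Taking $x:=P_G\in B_G$ we then have $y\in W^u_{loc}(x)\cap S$, which is the desired conclusion. I do not expect any genuine obstacle: the only point requiring a moment's care is the identification of the piece of $W^u(P_G)$ with backward orbit in $\cB$ (as furnished by \textbf{B}-3) with the intrinsic local unstable manifold $W^u_{loc}(P_G)$ of $P_G$ as a point of the hyperbolic set $B_G$. These coincide because $\cB$ plays the role of a Markov-type neighborhood of the orbit of $P_G$ and the cone-field hyperbolicity established in the first step yields an unstable manifold of definite size at each point of $B_G$.
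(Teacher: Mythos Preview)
Your argument is correct and is precisely what the paper has in mind: the paper states that the lemma ``follows immediately from the definition of $cs$-blender'' and leaves the proof to the reader, so there is nothing further to compare against. One small clean-up: instead of insisting on $x=P_G$ and then worrying about whether the piece of $W^u(P_G)$ cut out by $\overline{\cB}$ coincides with the intrinsic $W^u_{loc}(P_G)$ of uniform size, it is tidier to invoke the standard characterization of the local unstable lamination of a locally maximal hyperbolic set---namely, any point whose entire backward orbit remains in $\overline{\cB}$ lies on $W^u_{loc}(x)$ for \emph{some} $x\in B_G$---which gives the conclusion directly and sidesteps the identification issue you yourself flag at the end.
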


\begin{remark}\label{rmk cu blender 1}
 Using the inverse map, and defining properly  a $u$-strip (see Definition \ref{defi u strip}), we can define the notion of $cu$-blender (see details in Section \ref{s cu-blender}). In this case, the three hyperbolic cone fields are $\mathcal{C}^{s}$, 
$\mathcal{C}^{u}$, $\mathcal{C}^{uu}$  and it is required that the stable manifold of the saddle periodic point intersects any $u$-strip.
 
\end{remark}

\subsubsection{Symbolic $cs$-blender}\label{s symbo blender}
In this section we use the language of Section \ref{s skew & ifs} to introduce a symbolic interpretation and construction of $cs$-blenders.

Let  $\mathfrak{S}$ be the space of all skew products  $\Phi: k^\ZZ \x \RR^n \to k^\ZZ \x \RR^n$ such that 
$$\Phi(x,y)= (\tau(x), \phi_x(y)),$$
where $\tau:k^\ZZ\to k^\ZZ$ is the full shift with $k$ symbols and for any $x\in k^\ZZ$, $\phi_x=\phi_{W^{u}_{loc}(x)}$ is a contracting map on $\RR^{n}$ with positive contraction bound. 

Let $\Phi=(\tau,\phi_{x}), \Psi=(\tau, \psi_{x}) $ in $\mathfrak{S}$. 
We say that $\Phi$ is close to $\Psi$ if for any $x\in k^{\ZZ}$,  $\phi_{x}$ and its contraction bound  are  close to $\psi_{x}$  and its contraction bound, respectively.

By a  $s$-strip we mean the set $W^{s}_{loc}(x;\tau)\x U$ for some $x\in k^\ZZ$ and some open set $U\subset\RR^{n}$.

\begin{definition}\label{def symb blender}
Let $D$ be an open set in $\RR^n$.  The set $\cB=k^\ZZ\x D$ is a symbolic $cs$-blender of $\Phi\in \mathfrak{S}$ if 
 there exists a fixed point $(p,q)$ of $\Phi$ such that  for any $\tilde\Phi \in \mathfrak{S}$ close to $\Phi$ the following hold:

\begin{enumerate}
\item $(p,q)$ has a unique continuation $(p,\tilde q)$ for $\tilde\Phi$,
\item the unstable manifold of $(p,\tilde q)$ for $\tilde\Phi$ 
intersects any $s$-strip in $\cB$.
\end{enumerate}
A symbolic $cu$-blender is defined as  a symbolic $cs$-blender for $\Phi^{-1}$.
\end{definition}

Observe that the existence of a symbolic $cs$-blender is not trivial, specially its robustness property.
The next proposition guarantees the existence of a symbolic $cs$-blender.
 
\begin{proposition} \label{pro skew model}
Let $\Phi(x,y)= (\tau(x), \phi_{x_0}(y))$ be a locally constant  skew product, where $x_0\in \{1,\dots, k\}$ and $x=(\dots,x_{-1},x_0;x_1,\dots)$. Assume that $\phi_i: \RR^n \longrightarrow \RR^n$, $i=1, 2, \dots , k$, are contracting maps, and $\phi_i(z_i)=z_i$ are their unique fixed points. Suppose that the iterated function system $\cG=\cG(\phi_1, \dots , \phi_k )$ has  the covering property on $D \subset \RR^n$ and the set $\{z_i\}_{i=1}^k$ is well-distributed in $D$.
Then the set $\cB=k^\ZZ \x D$ is a symbolic $cs$-blender of $\Phi$.
\end{proposition}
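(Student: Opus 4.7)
The plan is to verify the two conditions of Definition \ref{def symb blender}. For the fixed point, take $p=(\dots,1,1;1,\dots)$ the constant sequence and $q:=z_{1}$, the fixed point of $\phi_{1}$; then $(p,q)\in\cB$ is a fixed point of $\Phi$. For any $\tilde\Phi\in\mathfrak{S}$ close to $\Phi$ the map $\tilde\phi_{p}$ is a contraction whose contraction bound is close to that of $\phi_{1}$, so Banach's theorem gives a unique fixed point $\tilde q$ near $q$; any fixed point of $\tilde\Phi$ must have the form $(p',y')$ with $p'$ constant and $\tilde\phi_{p'}(y')=y'$, so $(p,\tilde q)$ is the unique continuation of $(p,q)$.

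Since every element of $\mathfrak{S}$ has $\phi_{x}$ depending only on $W^{u}_{loc}(x;\tau)$, Proposition \ref{pro skew 1} gives $W^{uu}_{loc}(p,\tilde q;\tilde\Phi)=W^{u}_{loc}(p;\tau)\times\{\tilde q\}$ and, because $(p,\tilde q)$ is fixed,
\begin{equation*}
W^{uu}(p,\tilde q;\tilde\Phi)=\bigcup_{i\geq 0}\tilde\Phi^{i}\bigl(W^{u}_{loc}(p;\tau)\times\{\tilde q\}\bigr).
\end{equation*}
For $a=(\dots,1,1;a_{1},a_{2},\dots)\in W^{u}_{loc}(p;\tau)$ one has
\begin{equation*}
\tilde\Phi^{i}(a,\tilde q)=\bigl(\tau^{i}(a),\,\tilde\phi_{\tau^{i-1}(a)}\circ\cdots\circ\tilde\phi_{a}(\tilde q)\bigr),
\end{equation*}
and the $k$-th factor $\tilde\phi_{\tau^{k}(a)}$ depends only on $(\dots,1,1,a_{1},\dots,a_{k})$. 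By the closeness of $\tilde\Phi$ to $\Phi$ in $\mathfrak{S}$, this factor is $C^{0}$-close to $\phi_{a_{k}}$ and has contraction bound close to that of $\phi_{a_{k}}$ (with the convention $a_{0}:=1$).

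Given any $s$-strip $S=W^{s}_{loc}(x_{\ast};\tau)\times U$ contained in $\cB$ (so $U\subset D$ open), the goal is to produce $i$ and $a\in W^{u}_{loc}(p;\tau)$ with $\tilde\Phi^{i}(a,\tilde q)\in S$. The stable-direction condition $\tau^{i}(a)\in W^{s}_{loc}(x_{\ast};\tau)$ is realized simply by declaring $a_{i+j}:=(x_{\ast})_{j}$ for $j>0$, so only the free symbols $a_{1},\dots,a_{i-1}\in\{1,\dots,k\}$ and the integer $i$ remain to be chosen so that the $\RR^{n}$-coordinate lands in $U$. For the unperturbed $\Phi$ that coordinate equals $\phi_{a_{i-1}}\circ\cdots\circ\phi_{a_{1}}(q)$, whose set of values as $(i,a_{1},\dots,a_{i-1})$ vary is the forward $\cG$-orbit of $q\in D$, dense in $D$ by Proposition \ref{pro ifs 2} together with the covering and well-distributed hypotheses. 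For $\tilde\Phi$ each factor of the composition is replaced by a nearby contraction with nearby contraction bound; this is exactly the perturbation accommodated by the robustness clause of Proposition \ref{pro ifs 1}, whose proof uses only that the covering and well-distributed properties are $C^{0}$-open once contraction bounds are controlled. Hence the set of reachable second coordinates is still dense in $D$, and suitable choices of $i$ and $a_{1},\dots,a_{i-1}$ place $\tilde\Phi^{i}(a,\tilde q)$ inside $U$.

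The main obstacle is the last step: the iterates of the $\RR^{n}$-coordinate under $\tilde\Phi$ are not produced by a fixed finite family of contractions but by a family $\{\tilde\phi_{x}:x\in k^{\ZZ}\}$ in which a different map is applied at every step. The robust clause of Proposition \ref{pro ifs 1} is precisely what lets us bypass this issue, since it was formulated to permit replacing the contractions at each iteration independently, provided the replacements stay in a small $C^{0}$ neighborhood with controlled contraction bounds.
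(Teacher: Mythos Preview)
Your proof is correct and follows essentially the same route as the paper's: the same fixed point $(p,q)$, the same identification of $W^{uu}(p,\tilde q;\tilde\Phi)$ via Proposition~\ref{pro skew 1}, and the same appeal to Proposition~\ref{pro ifs 2} for density in the unperturbed case and to the robustness clause of Proposition~\ref{pro ifs 1} for the perturbed case. You are in fact more explicit than the paper on the one delicate point, namely that for $\tilde\Phi\in\mathfrak{S}$ the fiber maps along the unstable orbit are not drawn from a fixed finite family but vary with the full past, and that it is precisely the ``different map at each iteration'' form of robustness in Proposition~\ref{pro ifs 1} that absorbs this; the paper compresses this into a reference to the auxiliary IFS $\tilde\cG=\cG(\{\tilde\phi_{\bar x_0}\})$ together with Propositions~\ref{pro ifs 1} and~\ref{pro skew 3}.
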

\Proof
First we are going to prove the properties of the $cs$-blender for the initial system and latter we deal with the perturbations.

Let $(p,q)$ be a fixed point of $\Phi$, for instance,  $p=(\dots, 1, 1; 1,1, \dots)$ and $q=z_{1}$ is the unique fixed point of $\phi_{p}=\phi_{1}$.

Recall that the unstable manifold of $(p,q)$ is defined as the set of all points $(x,y)$ such that ${\rm dist}(\Phi^{i}(x,y), \Phi^{i}(p,q) ) \to 0 ~{\rm as}~ i\to-\infty $. Since the dynamic on $\RR^{n}$ is contracting, then the unstable manifold of $(p,q)$ coincides with $W^{uu}(p,q; \Phi)$. 
By Proposition \ref{pro skew 3}, the projection of $W^{uu}(x,y;\Phi)$ on $\RR^{n}$ 
is equal to $\orbit^+_\cG(y)$. 
Proposition \ref{pro ifs 2} yields that $D \subset \overline{\orbit_{\cG}^+(q)}$.
In other word, the unstable manifold of $(p,q)$ for $\Phi$ intersects any $s$-strip in $\cB$.

Now we show the robustness of this property. Let $\tilde\Phi \in \mathfrak{S}$ be close to $\Phi$. Since   $\tilde\phi_{p}$ is a contracting map close to $\phi_{p}=\phi_{1}$, it follows that  it has a unique fixed point $\tilde q$  close to $q$. We want to show that $W^{uu}(p,\tilde q; \tilde\Phi)$ is dense.
 Observe that each $\tilde\phi_x$ is close to $\phi_{x_0}.$ Now, given $x_0\in \{1,\dots, k\}$ we take  $\bar x_0=(\dots,x_0,x_0;x_0\dots)$ and we consider the IFS $\tilde\cG=\cG(\{\tilde\phi_{\bar x_0}\}_{x_0\in\{1,\dots, k\}})$.
The proof of  the density of the unstable manifold of $(p, \tilde q)$ follows from the density of the $\tilde\cG$-orbit of $\tilde q$ which holds  from the robustness of $\cG$ (see Propositions \ref{pro ifs 1} and  \ref{pro skew 3}). This yields that the unstable manifold of $(p,\tilde q)$ for $\tilde\Phi$ 
intersects any $s$-strip in $\cB$.
\Endproof


\subsubsection{Geometric model of a symbolic $cs$-blender}\label{s geometric cs-blender}
Using the skew product construction it is easy to build a geometric model of the symbolic blender. 

Let  $\phi_i: \RR^n \longrightarrow \RR^n$, $i=0, 1, \dots , k$, be  an affine contracting map as in  Proposition \ref{pro ifs 1}. Let $D$ be an open disk in $ \RR^n $ such that $\phi_i(D)\subset D$.

Let $f:\RR^{m} \to \RR^{m}$ be a diffeomorphism with a horseshoe type hyperbolic set $\Lambda=\bigcap_{n\in\ZZ} f^{n}(U)$, and with the splitting of the form $E^{ss}\oplus E^{u}$.  
 Then it has a Markov partition with $k+1$ symbols by rectangles $R_{0}, \dots, R_{k}$ such that for any $i\neq j$,  $~ \overline{R_{i}}\cap \overline{R_{j}}=\vazio$.  
 We also assume that the contraction rate on $E^{ss}$ is stronger than the contraction of $\phi_{i}$'s. 

We take a diffeomorphism  $F:\RR^{m}\x\RR^{n} \to \RR^{m}\x \RR^{n}$ such that $$F|_{R_{i}\x\RR^{n} }= f\x\phi_{i}.$$
Observe that $H:=\bigcap_{n\in \ZZ} F^n(U\x D)$ is a hyperbolic basic set with splitting $E^{ss}\oplus E^{s} \oplus E^u.$

Now we apply Proposition \ref{pro skew model} for $F$.  It yields that the set  $\Lambda\x D$ is a symbolic blender with a fixed point $(p,q)$ of $F$. 

The following lemma is an immediate consequence of the definition of blender. It highlights the distinguished property of a blender. 

\begin{lemma} \label{lem magic}
Let $\Lambda\x D$ be the geometric model of the  symbolic $cs$-blender as above. 
Then, for any $(x_1,x_2)\in U\x D$, there exists $(y_1,y_2)\in H$ such that 
$$W^{ss}_{loc}(x_1,x_2; F)\cap W^{u}_{loc}(y_1,y_2; F)\neq \vazio.$$
\end{lemma}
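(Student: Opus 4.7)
The plan is to exploit the skew-product form of $F$ to reduce the intersection problem to one in the horseshoe base, and then to produce the required point of $H$ from the covering property built into the definition of the blender.

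Because $F|_{R_i\x\RR^n}=f\x\phi_i$ is a product on each slab, the hyperbolic splitting for $H$ respects the product structure: $E^{ss}$ and $E^u$ are the strong stable and unstable bundles of the horseshoe $f$ (purely in the base coordinate), while $E^s$ is the fiber direction along which the $\phi_i$'s contract. Local invariant manifolds therefore factorize as
$$W^{ss}_{loc}(x_1,x_2;F)=W^{ss}_{loc}(x_1;f)\x\{x_2\},\qquad W^{u}_{loc}(y_1,y_2;F)=W^{u}_{loc}(y_1;f)\x\{y_2\},$$
so the intersection in question is non-empty if and only if $y_2=x_2$ and $W^{ss}_{loc}(x_1;f)\cap W^{u}_{loc}(y_1;f)\neq\vazio$. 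The task reduces to finding a point of $H$ of the special form $(y_1,x_2)$ whose base coordinate $y_1$ lies in the same Markov rectangle as $x_1$.

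To build such a $y_1$, I iterate the covering property $D\subset\bigcup_i\phi_i(D)$ backward from $x_2$: inductively, I choose indices $i_{-n}$ and points $x_2^{(n)}\in D$ with $x_2^{(0)}=x_2$ and $x_2^{(n)}=\phi_{i_{-n}}^{-1}(x_2^{(n-1)})$. Since $x_1\in U=\bigsqcup_i R_i$ belongs to some rectangle $R_j$, I set $i_0:=j$ and extend by any future itinerary $(i_n)_{n\geq 1}$ allowed by the transitions of the horseshoe (all of them, in the full-shift case). The resulting bi-infinite sequence codes a point $y_1\in\Lambda\cap R_j$. By construction the backward iterates $F^{-n}(y_1,x_2)=(f^{-n}(y_1),x_2^{(n)})$ lie in $U\x D$, while the forward iterates stay in $U\x D$ because $f^n(y_1)\in\Lambda\subset U$ and $\phi_i(D)\subset D$. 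Thus $(y_1,x_2)\in H$. Since $x_1$ and $y_1$ share the rectangle $R_j$, the local product structure of the horseshoe delivers a unique transverse intersection point $\xi\in W^{ss}_{loc}(x_1;f)\cap W^{u}_{loc}(y_1;f)$; lifting, $(\xi,x_2)\in W^{ss}_{loc}(x_1,x_2;F)\cap W^{u}_{loc}(y_1,x_2;F)$.

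The only delicate point in this plan is the free concatenation of the past produced by the covering property with an arbitrary current symbol $j$ and future tail. This is automatic for the full horseshoe assumed in the construction; in a subshift-of-finite-type setting one would have to match $i_0$ with an admissible transition from $i_{-1}$, which can be arranged by a mild refinement of the covering property restricted to each $\phi_i(D)$. Beyond this bookkeeping, the lemma is essentially a tautological consequence of the product form of $F$ on each slab and the covering property defining the blender.
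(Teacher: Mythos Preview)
Your argument is correct and is in fact more direct than the paper's. You exploit the exact skew-product form of $F$ to factorize the local invariant manifolds, and then invoke the covering property (the first part of Proposition~\ref{pro ifs 2}) to manufacture a backward itinerary $(i_{-n})_{n\ge 1}$ keeping the fiber coordinate in $D$; together with the freely chosen present symbol $i_0=j$ and future tail, this yields a point $(y_1,x_2)\in H$ with $y_1$ in the same Markov rectangle as $x_1$, after which the intersection is immediate from the local product structure of the horseshoe.

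The paper proceeds differently: it never constructs $(y_1,y_2)$ explicitly. Instead it uses only the already-established blender property that every $s$-strip meets $W^u(p,q;F)$, applies it to the nested strips $W^s_{loc}(x_1;f)\times U_n$ with $\bigcap_n U_n=\{x_2\}$, and passes to an accumulation point of the resulting sequence $\mathbf{z}_n\in H$. Your route is more constructive and makes the role of the covering property transparent, but it leans on the exact product form $F|_{R_i\times\RR^n}=f\times\phi_i$. The paper's limiting argument uses only the abstract $s$-strip intersection property, which is precisely what survives under perturbation (cf.\ Section~\ref{s geometric cs blender are cs blender}); this is why the paper chose that presentation, and why the lemma is stated as an immediate consequence of the \emph{definition} of blender rather than of its particular construction.
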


This lemma says that the topological dimension of $W^{u}_{loc}(H)$ is  larger than the topological dimension of the local unstable manifold of each point in $H$.

\Proof
Let $\{U_n\}_{n\in\NN}$  be a sequence of nested open neighborhood of $x_2$, such that $\bigcap_{n\in\NN} U_n = \{x_2\}$. 

By definition, for any $n$, $W^{s}_{loc}(x_1;f)\x U_n$ is a $s$-strip containing $W^{ss}_{loc}(x_1,x_2;F)$ and  intersecting $W^{u}(p,q; F)$.  Now, let ${\bf z}_{n}\in H\cap W^{u}(p,q; F)$ such that 
$$(W^{s}_{loc}(x_1;f)\x U_n) \cap   W^{u}_{loc}({\bf z}_{n}; F)\neq \vazio.$$
Let ${\bf z}$ be an accumulation point of the sequence $\{ {\bf z}_n\}$. Then ${\bf z} \in H $. Now, let  $n\to \infty $, then from the election of the $s$-strips (i.e. $\bigcap_{n\in\NN} U_n = \{x_2\}$) it follows  that 
$W^{ss}_{loc}(x_1,x_2,F) \cap W^u_{loc}({\bf z}; F)\neq \vazio.$
\Endproof

The following remark allows to recast the essential property of a $cs$-blender.

\begin{remark}\label{rmk cs-blender ph 1} Given a blender $(p, \cB)$ and its maximal invariant set $B$, 
from Lemma \ref{lem blender def} it follows that for any $x\in \cB$ holds that $W^{ss}_{loc}(x)\cap W^u_{loc}(B)\neq \vazio$.
Moreover, the same holds for any $G$ close to $F.$
\end{remark}

\subsubsection{Covering and well distributed properties of the geometric model of a $cs$-blender}\label{s properties cs-blender}

Now we would like to recast the hypothesis of Proposition \ref{pro ifs 2}, i.e., the {\it covering and well distributed properties} provided in Definition \ref{prop cont IFS},  in terms of properties of the geometric model of a $cs$-blender. Observe that the Markov partition $R_{0}, \dots, R_{k}$ for $\Lambda$ and given  by rectangles in $\RR^m$, provides a Markov partition $\hat R_{0}, \dots, \hat R_{k}$ for $\Lambda\x D$ in $\RR^m\x \RR^n$ where $\hat R_i=R_i\x D.$ 

Let $\cD$ be the disk  that  is covered by the union of $\phi_i(\cD)$. Let us also define $R_{\cD}:=\cup_i R_i \x \cD$. Now, given a point $x$ in $\hat R_i$ we define $W^{ss}_{\hat R_i}(x)$ as the connected component of $W^{ss}(x)\cap \hat R_i$ that contains $x.$ Then, it holds that for any $x\in R_{\cD}$ and taking $j $ such that $x\in \hat R_j$ it follows that 
\begin{eqnarray}\label{covering 1} W^{ss}_{\hat R_j}(x)\cap \cup_i F(R_{\cD}\cap \hat R_i)\neq \emptyset, \end{eqnarray}
 Moreover, observe that from the Markov property holds that $\cD\subset  \cup_i \phi_i(\cD)$, it follows that if $W^{ss}_{\hat R_j}(x)$ intersect  $F(R_{\cD}\cap \hat R_i)$ then 
\begin{eqnarray}\label{covering 2}F^{-1}(W^{ss}_{\hat R_j}(x)\cap F(R_{\cD}\cap \hat R_i))\cap \cup_i F(R_{\cD}\cap \hat R_i)\neq \emptyset.
 \end{eqnarray}
Conditions (\ref{covering 1}) and (\ref{covering 2}) are the {\it  covering property} for the geometric model.

The {\it well-distributed} property is recasted in the following way: given  $z_0,\cdots, z_k$  the set of  unique fixed point of $F$ in each $\hat R_i,$  if any open ball of diameter $d$ and centered in $R_{\cD}$ contains some $z_i$, where 
$$d\geq\max\{r\mid \forall x\in R_{\cD}, \exists i, B_r(x) \subset F(R_i\x \cD) \}.$$

\subsubsection{Geometric model of a symbolic $cs$-blender is a $cs$-blender}\label{s geometric cs blender are cs blender}
In this section we show that the property that the unstable manifold of any point intersects any $s$-strip also holds for any perturbation.
It is natural to think that any small perturbation of a geometric model of a symbolic $cs$-blender is also a geometric model of a symbolic $cs$-blender, in particular, that it is a skew-product locally constant over contracting IFS. But this is not generically true. In fact, if (under a change of coordinates if necessary) we get a skew-product locally constant  over contracting IFS, it would follow that the laminations associated to the strong stable and unstable subbundles are Lipschitz, which is generically false. So, to prove that  perturbations of a geometric model verify condition {\bf{B}}-3 in Definition \ref{def cs-blender} we can not reduce to skew-product locally constant. However, the proof of Proposition \ref{pro ifs 2} is easily adapted to the context of perturbations of the geometric model of a symbolic $cs$-blender, proving in this way that they are $cs$-blenders. 

The proof runs as follows: First, given $G$ $C^1$-close to $F$ in a neighborhood of $\Lambda\x D$,  we consider the set $B_G$ which is the maximal invariant set of $G$ in that neighborhood. 
Observe that the covering  and well-distributed property also holds for $B_G$. This is immediate from the fact that $G( R_{\cD}\cap \hat R_i)$ is $C^0$-close to $F(R_{\cD}\cap \hat R_i)$ and $W^{ss}_{\hat R_j, G}(x)$ is $C^1$-close to  $W^{ss}_{\hat R_j, F}(x)$. So, given $x$ in $R_{\cD}$ we consider a $s$-strip $S$ around $W^{ss}_{\hat R_i}(x, G)$ (where $i$ is such that $x\in \hat R_i$)  and we take $S \cap G(\hat R_j \cap R_{\cD})$, where $j$ verifies that $S \cap G(\hat R_j \cap R_{\cD})\neq \emptyset.$ Observe that $G^{-1}(S\cap G(\hat R_j \cap R_{\cD}))$ is again an $s+ss$-embedded  disk tangent to $E^{ss}\oplus E^s$  that intersects $R_{\cD}$ and therefore we can choose new indexes $i, j$ as before; since the $DG^{-1}_{|E^{ss}\oplus E^s}$ is expanding, in the same way that in Proposition \ref{pro ifs 2} it follows that a backward orbit of a subdisk inside $S$ eventually intersects the local unstable manifold of some fixed point ${z_i}_G$ (where  ${z_i}_G$ is the analytic continuation of some $z_i$). Since all the fixed points are homoclinic connected for $G$, we  prove in that way that the unstable manifold of any fixed point intersects any $s$-strip. This finishes the proof.

\subsection{The $cu$-blenders}\label{s cu-blender}

Using the inverse map, we can introduce the notions of  $cu$-blender and symbolic $cu$-blender, its geometric model and also the properties of covering and well distribution.
In the same way we see that the geometric models of symbolic $cu$-blenders are in fact $cu$-blenders. The details are left for the reader.


\subsection{Double-blender}\label{s double blender}
In this section we introduce the definition of double blender.

\begin{definition}\label{defi u strip} ($u$-strip)
Let $F$ be a diffeomorphism on the manifold $M$.
Let $\cB$ be an open embedded ball  with four cone-fields $\mathcal{C}^{ss}$, 
$\mathcal{C}^{s}$, $\mathcal{C}^{u}$, $\mathcal{C}^{uu}$, invariant under the derivative $DF$ defined in a compact neighborhood of $\cB.$
A vertical strip (or $u$-strip) is an embedded ($u+uu$)-dimensional disk in $\cB$,
which contains the $uu$-leaves of each its points. 
\end{definition}

\begin{definition}[double blender] \label{def blender}
The pair  $(P,\cB)$ is a double blender for the diffeomorphism $F$ if it satisfies
the following features:
\begin{list}{\textbf{B}-\arabic{Lcount} $\;$}{\usecounter{Lcount}}
\item $P$ is a hyperbolic saddle periodic point of $F$ contained in $\cB$;
\item $\cB$ is an open embedded ball on which there are four hyperbolic cone fields $\mathcal{C}^{ss}$, 
$\mathcal{C}^{s}$, $\mathcal{C}^{u}$ and $\mathcal{C}^{uu}$ invariant under the derivative $DF$  defined in a compact neighborhood of $\cB.$
\item Any $G$ sufficiently close to $F$ in the $C^1$ topology verifies the following:

\begin{enumerate}
 \item any $u$-strip in $\cB$ intersects
some  $s$-strip  contained in the  stable manifold of $P_G$ whose  forward orbit is in $\cB$;

\item any $s$-strip in $\cB$ intersects some $u$-strip contained in the unstable manifold of $P_G$ whose backward orbit is in $\cB$.
\end{enumerate}

Here $P_G$ is the continuation of $P$. 
\end{list}
\end{definition}

The next lemma follows immediately from Definition \ref{def blender} and the proof of Lemma \ref{lem magic}.
\begin{lemma}\label{lem blender def} Let  $(P,\cB)$ be a double blender. Let $B_G=\bigcap_{n\in \ZZ}G^n({\rm closure}(\cB))$ for $G$ close to $F.$ Then it follows that $B_G$ is a hyperbolic set such that for any 
$s$-strip $S$ in $\cB$ there is $x\in B_G$ such that $W^u_{loc}(x)\cap \cB\neq \vazio$, where $ W^u_{loc}(x)$ denotes the local stable manifold of $x.$
A similar statement holds for any $u$-strip.
 
\end{lemma}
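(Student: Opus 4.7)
The plan is to mimic the proof scheme of Lemma \ref{lem cs-blender def} and Lemma \ref{lem magic}, simply doubling the argument on the $cu$-side. The statement has two halves: hyperbolicity of $B_G$, and the strip-intersection property, which will be proved twice, once for $s$-strips and once for $u$-strips, by a symmetric argument.

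For hyperbolicity of $B_G$, I would use property \textbf{B}-2: the four cone fields $\cC^{ss}, \cC^s, \cC^u, \cC^{uu}$ are $DF$-invariant on a compact neighborhood of $\cB$. Cone-field invariance is a $C^1$-open condition, so the same cones remain invariant under $DG$ for every $G$ sufficiently $C^1$-close to $F$. Applying the cone-field criterion at each point of $B_G$ produces a $DG$-invariant dominated splitting $T_{B_G}M = E^{ss}\oplus E^s\oplus E^u\oplus E^{uu}$. Combined with the contraction/expansion rates implicit in the definitions of stable and unstable cones (strict contraction on $\cC^{ss},\cC^{s}$ and strict expansion on $\cC^{u},\cC^{uu}$, in the appropriate hierarchy), this yields hyperbolicity of $B_G$ with $E^s_{B_G}=E^{ss}\oplus E^s$ and $E^u_{B_G}=E^u\oplus E^{uu}$.

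Next I would address the $s$-strip claim (which must be read as $W^u_{loc}(x)\cap S\neq\vazio$, since the formulation printed with $\cB$ in place of $S$ is trivially true whenever $x\in B_G\subset\overline{\cB}$). Fix an $s$-strip $S\subset\cB$. Property \textbf{B}-3(2) of Definition \ref{def blender} produces a $u$-strip $U\subset W^u(P_G)$ whose entire backward orbit lies in $\cB$ and which meets $S$; pick any $y\in U\cap S$. Since $G^{-n}(U)\subset\cB$ for all $n\geq 0$, we have $G^{-n}(y)\in\overline{\cB}$ for every $n\geq 0$. By the standard description of the local unstable set of a uniformly hyperbolic maximal invariant set, namely
$$\{z:G^{-n}(z)\in\overline{\cB}\text{ for every }n\geq 0\}=\bigcup_{x\in B_G}W^u_{loc}(x),$$
one concludes $y\in W^u_{loc}(x)$ for some $x\in B_G$, which gives the desired nonempty intersection.

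The $u$-strip case is entirely symmetric: apply property \textbf{B}-3(1) with $G^{-1}$ in place of $G$, using the dual characterization $\{z:G^{n}(z)\in\overline{\cB}\text{ for every }n\geq 0\}=\bigcup_{x\in B_G}W^s_{loc}(x)$. The only step that requires any genuine care is verifying that the cone fields of \textbf{B}-2 carry honest contraction and expansion rates (not merely pairwise domination), since this is what simultaneously secures the hyperbolic splitting and the local stable/unstable manifold identifications. Since such rates are a standing assumption in the blender setup inherited from the cone-field conditions of Definitions \ref{def cs-blender} and \ref{def blender}, no real obstacle remains; the lemma truly does follow immediately, just as the authors claim.
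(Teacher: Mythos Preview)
Your proof is correct and matches the paper's intended argument, which the authors indicate follows immediately from Definition \ref{def blender} together with the proof of Lemma \ref{lem magic}. You have spelled out explicitly the step that the paper leaves implicit---namely, that a point on the $u$-strip $U\subset W^u(P_G)$ with full backward orbit in $\cB$ must lie on $W^u_{loc}(x)$ for some $x\in B_G$ via the standard identification $\bigcap_{n\ge 0}G^n(\overline{\cB})=W^u_{loc}(B_G)$---and you correctly flag the typo ($\cB$ should read $S$) in the statement.
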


The following remark allows to recast the essential property of a blender.

\begin{remark}\label{rmk blender ph 1} Given a blender $(p, \cB)$ and its maximal invariant set $B$, 
from Lemma \ref{lem blender def} it follows that for any $x\in \cB$ holds:
 \begin{itemize}
\item [-] $W^{ss}_{loc}(x)\cap W^u_{loc}(B)\neq \vazio$,

\item [-] $W^{uu}_{loc}(x)\cap W^s_{loc}(B)\neq \vazio$.
\end{itemize}
Moreover, the same holds for any $G$ close to $F.$
\end{remark}

\subsubsection{Symbolic double-blender}\label{s symbo double blender}
Here we introduce an abstract model with the features of symbolic $cu$- and $cs$-blenders, simultaneously. We call it  a symbolic double-blender.

Let  $\mathfrak{A}$ be the space of  $\Phi: k^\ZZ \x \RR^n\x \RR^m \to k^\ZZ \x \RR^n\x \RR^m$ be a skew product such that 
$$\Phi(x,y,z)= (\tau(x), \phi_x(y), \psi_x(z)),$$
where $\tau:k^\ZZ\to k^\ZZ$ is the full shift with $k$ symbols, for any $x\in k^\ZZ$, $\phi_x=\phi_{_{W^{u}_{loc}(x)}}$ is  contracting map on $\RR^{n}$ with contraction bound larger than a fixed positive number, and $\psi_x=\psi_{_{W^{s}_{loc}(x)}}$ is  an expanding map on $\RR^{m}$ with expansion bound smaller than a fixed number.

Let  us fix $\Phi=(\tau,\phi_{x}, \psi_{x}), \tilde\Phi=(\tau,\tilde\phi_{x},\tilde\psi_{x} ) $ in $\mathfrak{A}$. 
We say that $\Phi$ is close to $\tilde\Phi$ if for any $x\in k^{\ZZ}$,  $\phi_{x}$ and its contraction bound  are  close to $\tilde\phi_{x}$  and its contraction bound, respectively, and  $\psi_{x}$ and its expansion bound  are  close to $\tilde\psi_{x}$  and its expansion bound, respectively.

For $\Phi=(\tau,\phi_{x}, \psi_{x})$ in $\mathfrak{A}$, we denote $\Phi_{cs}=(\tau,\phi_{x})$, and $\Phi_{cu}=(\tau, \psi_{x})$. 

\begin{definition}
Let $U$ and $V$ be open sets in $\RR^n$ and $\RR^m$, respectively. The set $\cB=k^\ZZ\x U\x V$ is a symbolic double-blender of $\Phi\in \mathfrak{A}$ if 
$k^\ZZ\x U$ is a $cs$-blender of $\Phi_{cs}$ and $k^\ZZ\x V$ is a $cu$-blender of $\Phi_{cu}$. 
\end{definition}
 
A similar proposition to the one formulated for a symbolic cs-blender (Proposition \ref{pro skew model}) can be formulated in the context of a symbolic double blender.
 
\begin{proposition} \label{pro double model} 
Let $\Phi\in\mathfrak{A}$ be a locally constant  skew product such that $\Phi(x,y,z)= (\tau(x), \phi_{x_{0}}(y), \psi_{x_{1}}(z))$, where $x=(\dots,x_{-1},x_0;x_1,\dots)\in k^{\ZZ}$. Assume that $\phi_i$ and $\psi_{i}^{-1}$, $i=1, 2, \dots , k$, are contracting maps.
Suppose that the iterated function systems $\cG=\cG(\phi_1, \dots , \phi_k )$ and $\cG=\cG(\psi_1^{-1}, \dots , \psi_k^{-1} )$ have covering and well-distributed properties on $D_{1} \subset \RR^n$ and $D_{2} \subset \RR^m$, respectively.
Then the set $\cB=k^\ZZ \x D_1\x D_{2}$ is a symbolic double-blender of $\Phi$.
\end{proposition}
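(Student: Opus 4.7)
The plan is to deduce Proposition \ref{pro double model} from two applications of Proposition \ref{pro skew model}, one for each ``half'' of the double-blender. By definition, $\cB=k^{\ZZ}\x D_{1}\x D_{2}$ is a symbolic double-blender of $\Phi$ precisely when $k^{\ZZ}\x D_{1}$ is a symbolic $cs$-blender of $\Phi_{cs}=(\tau,\phi_{x})$ and $k^{\ZZ}\x D_{2}$ is a symbolic $cu$-blender of $\Phi_{cu}=(\tau,\psi_{x})$. The third factor in each case plays no role in the corresponding blender condition, so I can work with $\Phi_{cs}$ and $\Phi_{cu}$ separately.

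For the $cs$-direction, the map $\Phi_{cs}(x,y)=(\tau(x),\phi_{x_{0}}(y))$ is already in the exact form required by Proposition \ref{pro skew model}: it is a locally constant skew product over the full $k$-shift with fiber maps $\phi_{1},\dots,\phi_{k}$ contracting on $\RR^{n}$, whose IFS $\cG(\phi_{1},\dots,\phi_{k})$ possesses the covering property and a well-distributed set of fixed points on $D_{1}$ by hypothesis. Proposition \ref{pro skew model} then gives immediately that $k^{\ZZ}\x D_{1}$ is a symbolic $cs$-blender of $\Phi_{cs}$.

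For the $cu$-direction, I would first recall that by definition a symbolic $cu$-blender of $\Phi_{cu}$ is the same as a symbolic $cs$-blender of $\Phi_{cu}^{-1}$, so the goal becomes applying Proposition \ref{pro skew model} to $\Phi_{cu}^{-1}$. Using the convention $(\tau(x))_{i}=x_{i+1}$, a short computation gives
\begin{equation*}
\Phi_{cu}^{-1}(x,z)=\bigl(\tau^{-1}(x),\;\psi_{x_{0}}^{-1}(z)\bigr),
\end{equation*}
exhibiting $\Phi_{cu}^{-1}$ as a locally constant skew product over the full $k$-shift $\tau^{-1}$ whose fiber map depends only on the coordinate $x_{0}$, with contracting fiber maps $\psi_{1}^{-1},\dots,\psi_{k}^{-1}$. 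The IFS $\cG(\psi_{1}^{-1},\dots,\psi_{k}^{-1})$ is assumed to enjoy both the covering and well-distributed properties on $D_{2}$, so Proposition \ref{pro skew model} applies again and yields that $k^{\ZZ}\x D_{2}$ is a symbolic $cs$-blender of $\Phi_{cu}^{-1}$, i.e.\ a symbolic $cu$-blender of $\Phi_{cu}$. Combining the two halves finishes the proof.

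The only delicate point in this scheme is a consistency check on the notion of ``closeness'': a perturbation $\tilde\Phi\in\mathfrak{A}$ of $\Phi$ must induce perturbations of $\Phi_{cs}$ and of $\Phi_{cu}^{-1}$ inside the space $\mathfrak{S}$ (closeness of fiber maps together with closeness of contraction bounds), so that the robustness built into Proposition \ref{pro skew model} can be legitimately invoked. For $\Phi_{cs}$ this is immediate from the definition of closeness in $\mathfrak{A}$, and for $\Phi_{cu}^{-1}$ it follows from the elementary fact that closeness of $\psi_{x}$ together with closeness of its expansion bound forces closeness of $\psi_{x}^{-1}$ and of the corresponding contraction bound. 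I expect this bookkeeping to be the only step that requires any attention; everything else is a direct reduction to the already established Proposition \ref{pro skew model}.
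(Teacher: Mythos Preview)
Your proposal is correct and matches the paper's intent: the paper simply states that ``the proof is similar to the proof of Proposition \ref{pro skew model} and it is left to the reader,'' and you have carried out precisely that reduction, applying Proposition \ref{pro skew model} once to $\Phi_{cs}$ and once to $\Phi_{cu}^{-1}$ after the index computation $\Phi_{cu}^{-1}(x,z)=(\tau^{-1}(x),\psi_{x_0}^{-1}(z))$. The bookkeeping on closeness in $\mathfrak{A}$ versus $\mathfrak{S}$ is the only place requiring care, and you have identified and handled it correctly.
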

 
The proof is similar to the proof of Proposition \ref{pro skew model} and it is left to the reader.


\subsubsection{Geometric model of symbolic double-blender}
Using the skew product construction it is easy to build a geometric model of a symbolic double-blender. The idea is the same exposed in Section \ref{s symbo blender} where the geometric model of a symbolic $cs$-blender was considered. But instead of considering a set of affine contracting maps we use a set of pairs of affine maps, one contracting and one expanding. 
In few words,  we consider the skew product $F$ such that $F|_{R_{i}\x\RR^{n}\x\RR^{n} }= f\x\phi_{i}^s\x\phi_i^u$, where for any $i$, $\phi^s_i:\RR^n\to \RR^n$ is contracting and $\phi^u_i:\RR^n\to \RR^n$ is expanding.


 The geometric idea behind this definition is the following. 
In the 3-dimensional $cs$-blenders  of \cite{bd}, if one projects the cube and its pre-image along of stable direction, a figure like a Smale horseshoe appears but the right and left rectangles overlap. 
With this in mind, consider a 4-dimensional horseshoe with the splitting of the form $ E^{ss} \oplus E^{s} \oplus E^{u} \oplus E^{uu} $  where the projection along $E^{ss}$ gives a figure like a 3-dimensional horseshoe such that its two wings overlaps and the same feature holds for the inverse map. 
Let $F$ be such a diffeomorphism on the open set $\cB$.
Then, the maximal invariant set in $\cB$, i.e., $\Lambda=\bigcap_n F^{n}(\cB)$ is a $cs$-blender, if we consider $E^{ss} \oplus E^{s}$ as the stable direction, $E^{u}$ as the central unstable direction and $E^{uu}$ as the strong unstable direction. Similarly $\Lambda$ is a $cu$-blender if we consider $E^{ss}$ as the strong stable direction, $E^{s}$ as the central stable direction and $E^{u} \oplus E^{uu}$ as the unstable direction. Therefore, $\Lambda$ is a double-blender. Note that using the results in Section \ref{s symbo blender}, we may consider multi-dimensional central bundles, i.e., both of center stable and center unstable bundles of arbitrary dimensions.

In a similar way as it was done in Section \ref{s geometric cs-blender} the notion of {covering and well distributed properties}, can be formulated for double-blenders.


\subsubsection{Geometric model of a symbolic double-blender is a double-blender}\label{s geometric double blenders are double blender}
The proof is essentially the same as the one provided for $cs$-blender in Section \ref{s geometric cs blender are cs blender} but working simultaneously along the center stable and center unstable directions.

\subsection{Symplectic blender} \label{s blender}

\begin{definition}
A \textit{symplectic blender} is a double-blender for a symplectic (or Hamiltonian) diffeomorphism. 
\end{definition}
Observe that if we want to consider geometric models of symplectic blenders as it was done for the double-blenders, it is necessary to chose the pairs of contracting and expanding affine maps, in such a way that a symplectic relation is satisfied. For instance, $\phi_i^u={(\phi^s_i)}^{-1}$.

The following theorem introduces a construction which yields the existence of a symplectic blender in the context of Theorem \ref{thm A}. 
 
\begin{theorem}\label{thm blender}
Let $M$ and $N$ be two symplectic manifolds (not necessarily compact). Let $r=1, 2, \dots, \infty$. Suppose that $f_{1}\in \mathrm{Diff}^{r}_{\omega}(M)$ has a hyperbolic periodic point $\hat{p}$ with transversal homoclinic intersections and $f_{2}\in \mathrm{Diff}^{r}_{\omega}(N)$  has a $\delta$-weak hyperbolic periodic point $\hat{q}$ with $\delta=\delta(f_1,\dim(N))>0$ small enough.

Then, there is a $C^{r}$ arc $\{F_{\mu}\}_{\mu\in [0,1]}$ of $C^{r}$ symplectic diffeomorphisms on $M\times N$ such that,
\begin{enumerate}
\item $F_{0}=f_{1}\times f_{2}$.
\item There is a neighborhood $\cV$ of $\{F_{\mu}\}_{\mu\in (0,1]}$ in $\Diff^1(M\x N)$ such for any $G\in \cV$, the pair $(P_G, \cB)$ is a double blender, where $P_G$ is the continuation of hyperbolic $P_0=(\hat{p}, \hat{q})$ and $\cB$ is an embedded open disk in $M\x N$.
\end{enumerate}
\end{theorem}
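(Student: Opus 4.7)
\medbreak\noindent\emph{Proof plan.}
My plan is to produce the arc $\{F_\mu\}$ by composing $f_1\times f_2$ with small, Hamiltonian (hence symplectic) translations supported near the pieces of a Markov partition for a horseshoe of $f_1$, and then check that the resulting map fits the geometric model of a symbolic double-blender introduced in Section~\ref{s double blender}, applied in both the center-stable and center-unstable directions coming from the weakly hyperbolic point $\hat q$. First I would use the transversal homoclinic intersection of $\hat p$ to extract a nontrivial hyperbolic set $\Lambda$ for some iterate $f_1^N$ with a Markov partition by rectangles $R_0,\dots,R_k$ whose closures are pairwise disjoint; by iterating further, I may choose the number $k+1$ of rectangles at will. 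In a Darboux chart around $\hat q=0$ (using $\dim N=2m$), the map $f_2$ is approximately the linear symplectic $L(x,y)=(Ax,(A^{T})^{-1}y)$, with $\|A\|,\|A^{-1}\|\in(1-\delta,(1-\delta)^{-1})$ by the $\delta$-weak hypothesis; let $D=D_s\times D_u$ be a small bidisk around $\hat q$.

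Next, I pick vectors $c_i\in D_s$ and $c_i^{*}\in D_u$ and take $\rho_{\mu,i}$ to be the time-$\mu$ map of a compactly supported Hamiltonian whose flow, on a small neighborhood of $R_i\times D$ inside $M\times N$, implements the translation $(x,y)\mapsto(x+\mu c_i,y+\mu c_i^{*})$; since translations in Darboux coordinates are exact symplectic, such Hamiltonians are easy to write down as sums of linear functions of $x$ and $y$ multiplied by a cut-off. Setting
\[
F_\mu := \rho_{\mu,k}\circ\cdots\circ\rho_{\mu,0}\circ(f_1\times f_2)
\]
gives a $C^r$ arc with $F_0=f_1\times f_2$. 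On each slab $R_i\times D$, the map $F_\mu$ has the form $(f_1|_{R_i})\times(\phi_i^{s}\times\phi_i^{u})$ with $\phi_i^{s}=A+\mu c_i$ contracting and $\phi_i^{u}=(A^{T})^{-1}+\mu c_i^{*}$ expanding. I now invoke Proposition~\ref{pro ifs 1} twice (once in the stable center direction, once for the inverses in the unstable center direction) to select the translations so that both IFS $\cG(\phi_0^{s},\dots,\phi_k^{s})$ and $\cG((\phi_0^{u})^{-1},\dots,(\phi_k^{u})^{-1})$ satisfy the covering and well-distributed properties of Definition~\ref{prop cont IFS} on small subdisks $\cD_s\subset D_s$ and $\cD_u\subset D_u$. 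The required number $k+1$ of generators depends only on the contraction bounds of $A$ and $A^{-1}$, hence on $\delta$ and $\dim N$; this fixes $k=k(\delta,\dim N)$, and $\delta$ must be chosen small enough that a horseshoe for $f_1$ with at least $k+1$ rectangles exists and dominates the center dynamics in the partially hyperbolic sense.

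For $\mu>0$ fixed, the map $F_\mu$ is then, by construction, a geometric model of a symbolic double-blender with box $\cB$ obtained by thickening $\Lambda\times\cD_s\times\cD_u$ in $M\times N$, and with $P_\mu$ the unique fixed point of $F_\mu$ inside the rectangle corresponding to $\hat p$. Proposition~\ref{pro double model}, together with the robustness argument carried out in Sections~\ref{s geometric cs blender are cs blender} and \ref{s geometric double blenders are double blender}, shows that for every $G\in\cV$ in a $C^1$ neighborhood of $F_\mu$, the covering and well-distributed properties persist for the perturbed rectangles and the perturbed strong (un)stable laminations, so that the stable and unstable manifolds of the continuation $P_G$ intersect every $u$-strip and every $s$-strip of $\cB$, respectively. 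This is exactly conditions \textbf{B}-1, \textbf{B}-2, \textbf{B}-3 of Definition~\ref{def blender}. I expect the main technical obstacle to be the quantitative bookkeeping that ties $\delta$, the number of horseshoe rectangles, and the sizes of the translations $c_i,c_i^{*}$ together consistently so that (i) the translations remain inside $D$, (ii) the partial hyperbolicity domination of the horseshoe over the center persists under the perturbation, and (iii) both IFS simultaneously satisfy the covering/well-distributed hypotheses; symplecticity itself is painless since translations in Darboux coordinates are automatically symplectic and can be chosen independently in the $x$- and $y$-directions.
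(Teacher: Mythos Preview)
Your proposal follows essentially the same approach as the paper: extract a full-shift horseshoe from the homoclinic intersection of $\hat p$, linearize $f_2$ near $\hat q$ in Darboux coordinates, perturb by compactly supported Hamiltonian translations on the Markov rectangles so that the induced center-stable and center-unstable IFS satisfy the covering and well-distributed properties of Proposition~\ref{pro ifs 1}, and then invoke the robustness of the geometric double-blender model from Sections~\ref{s geometric cs blender are cs blender} and \ref{s geometric double blenders are double blender}.

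Two points of execution differ from the paper and are worth noting. First, the paper assigns the stable-direction translations to one block of symbols $\{1,\dots,l\}$ (via a Hamiltonian $\tilde h_1$ supported on $\cA_{i,*}$ and composed \emph{after} $F_0$) and the unstable-direction translations to a disjoint block $\{l+1,\dots,2l\}$ (via $\tilde h_2$ supported on $\cA_{*,j}$ and composed \emph{before} $F_0$), so the resulting skew product matches Proposition~\ref{pro double model} exactly with $\phi_{x_0}$ and $\psi_{x_1}$; in your version, with all translations post-composed and supported on $R_i$, the center map on the slab $R_i\times D$ actually depends on the \emph{destination} rectangle, not on $i$, so your displayed formula is off by one index (harmless for the argument, but worth correcting). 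Second, the bookkeeping you flag as the main obstacle is resolved in the paper by fixing the target contraction bound $1/2$ \emph{before} choosing anything else: Proposition~\ref{pro ifs 1} then gives $l=l(\dim N)$ independently of $\delta$, one iterates $f_1$ until the horseshoe has $d\ge 2l$ symbols, and only afterwards picks $\delta$ so that $(1-\delta)^k>1/2$; this breaks the apparent circularity between $k$ and $\delta$ and makes the dependence $\delta=\delta(f_1,\dim N)$ explicit.
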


\Proof
Let $V \subset M$ be a small open disk which contains $\hat{p}$ and a point of transversal homoclinic intersection associated to $\hat{p}$, such that for some $k\in \mathbb{N}$, $\Lambda:=\bigcap_{n\in \mathbb{Z}}f_1^{kn}(V)$ is an invariant hyperbolic compact set of saddle type for $f_1^k$. By choosing $V$ suitable and $k$ large enough, we may suppose that $f^k_1\mid_\Lambda$ is conjugate to a shift of $d+1$ symbols $\{0,1, \dots, d\}$. Moreover, we can assume that the elements of Markov partition of $\Lambda$ are contained in open sets such that their closure are pairwise disjoint. Also observe that if $k$ is large enough and $V$ is suitable chosen, then $d=d(f_1, k)$ can be taken arbitrarily large. 
Therefore if $k$ is large enough, by taking $f_1^k$, and $f_2^k$ instead of $f_1$ and $f_2$, we may assume that  $P_0=(\hat{p},\hat{q})$ is a fixed point of  $F_{0}=f_{1}\times f_{2}$ and $\Lambda$ is $f_1$-invariant and  $f_1$ on $\Lambda:=\bigcap_{n\in \mathbb{Z}}f_1^{n}(V)$ is conjugate to the shift of symbols $\{0,1, \dots, d\}$. Moreover, we may assume that $f_{2}$ is dominated by $f_{1}|_{\Lambda}$.  Indeed, we may replace $f_{2}$ by a map which is identity in the complement of $\hat{U}$, where we choose $\tilde{U}\subset \subset \hat{U}$ small neighborhoods of the fixed point $\hat{q}$, and in such a way that $\tilde{U}$ is much smaller than $\hat{U}$.

For a (linear) contraction map of $\RR^{n}$, ($2n= \dim(N)$) with contraction bound equal to $1/2$,
Proposition \ref{pro ifs 1} gives a number $l$ as a required number of elements of the IFS to obtain transitivity in some small disk. 

Now we choose $k$ large enough such that $d\geq2l$.

Once we substitute $f_{2}$ by $f_2^k$ it follows that $m(Df^k|_{E^s})> (1-\delta)^k$. Therefore, $\delta=\delta(f_1,\dim(N))$ is chosen close to zero in such a way that $(1-\delta)^k>1/2$.

From now on, we assume that $\hat{q}$ of $f_{2}$ is a $\delta$-weak hyperbolic fixed point of $f_{2}$ with $\delta<1/2$, $\hat{p}\in \Lambda$ is a hyperbolic fixed point of  $f_{1}$, and
$\Lambda$ is $f_1$-invariant and it is conjugate to a shift with $d+1$ symbols as above.

We want to perturb $F_{0}$ to construct a family $F_{\mu}$  such that each one has a symbolic double blender as a sub-system, and we will show how this leads to the existence of a symplectic blender.


\subsubsection*{Perturbations}
Let $\zeta>0$ is small enough (will be chosen later) and $\varepsilon : [0,1] \longrightarrow [0,\zeta]$ is a smooth simple curve such that $\varepsilon(0)=0$.

Let us define $F_0=f_1\times f_2$.\\
For $\mu \in (0,1]$,
$$F_{\mu}:= \Psi^{\varepsilon(\mu)}\circ F_0 \circ  \Phi^{-\varepsilon(\mu)},$$
where $ \Psi^t$ and $\Phi^{t}$ are the time $t$ maps of the Hamiltonians $\tilde{h}_{1}$ and $\tilde{h}_{2}$, respectively, to be chosen later.

In order to define the local perturbation, we first consider open sets $\cA_{ij}$ and pairwise disjoint open sets $\widetilde{\cA}_{ij} $ in a small neighborhood of $\Lambda$, such that,
\begin{eqnarray}\label{equ aij}
 \cA_{ij} \cap \Lambda = \left\lbrace (x_i)_{i\in \mathbb{Z}} \mid x_0= i, ~ x_1=j \right\rbrace \andrm \cA_{ij} \subset \subset  \widetilde{\cA}_{ij}
\end{eqnarray}
where $(x_i)_{i\in \mathbb{Z}}$ is the itinerary of a point in the Markov partition.

 In addition, we set 
\begin{eqnarray}\label{equ a*}
 \cA_{i,*}=\bigcup^{d}_{j=0} \cA_{ij} \andrm \cA_{*,j}=\bigcup^{d}_{i=0} \cA_{ij}.
\end{eqnarray}
 If $\cJ, \cI\subset \{0,1, \dots, d\}$ then we set 
\begin{eqnarray}\label{equ aiJ}
\cA_{i,\cJ}=\bigcup_{j\in \cJ} \cA_{ij}.
\end{eqnarray}
Similar notations shall be used for $\cA_{\cI,j}, \cA_{\cI,\cJ},   \widetilde{\cA}_{i,*}, \widetilde{\cA}_{*,j}$ and  $\widetilde{\cA}_{\cI,\cJ}$.

\begin{figure}
\begin{center}
\includegraphics{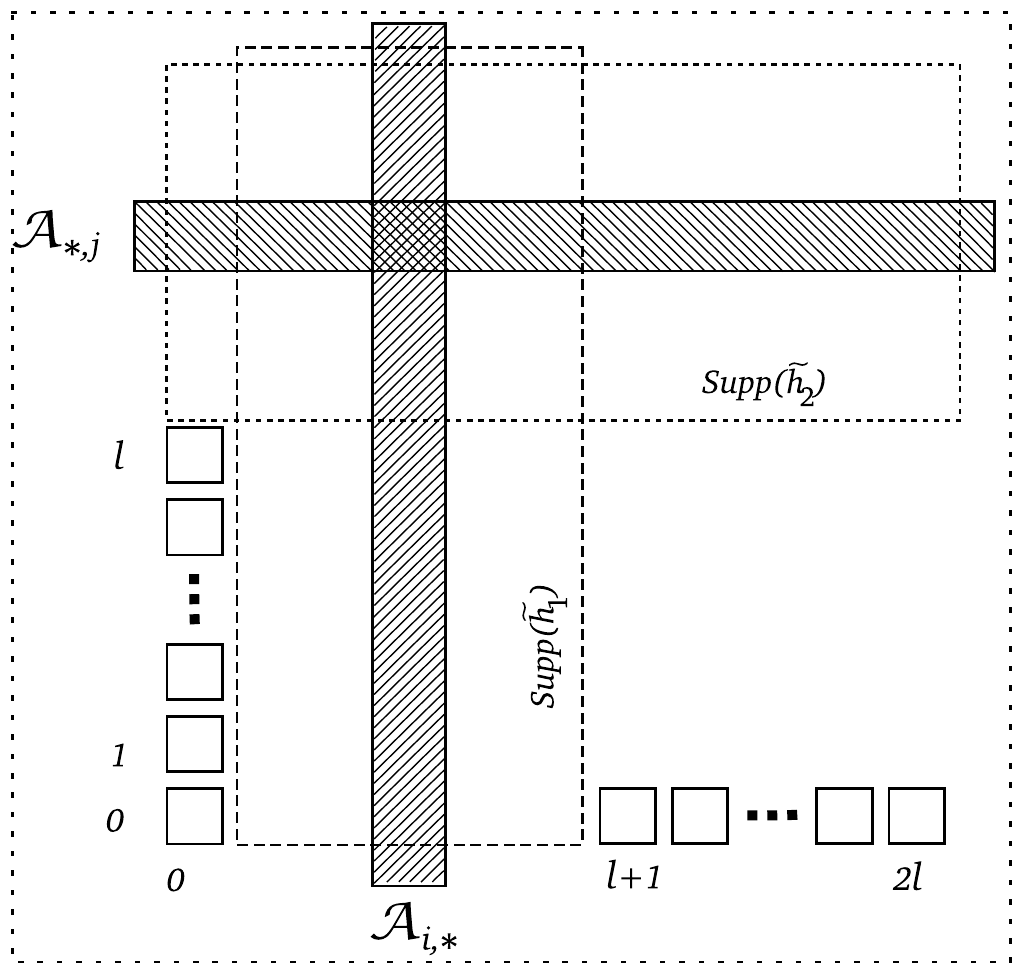}
\caption{Support of local perturbations projected to $\Lambda$. No perturbation is made in white blocks.}
\end{center}
\end{figure}

Using the Darboux Theorem we select local coordinates in a neighborhood $\tilde{U}$ of $\hat{q}$ in $N$, of the form $(a_{1}, \dots, a_{n}; b_{1}, \dots, b_{n})$ such that on $\tilde{U}$ the symplectic form is  $\Sigma_{i=1}^{n} da_{i}\wedge db_{i}$. These coordinates will be useful to define our local perturbation. 

Let $U \subset \subset \tilde{U}$ be a small neighborhood of $\hat{q}$ such that $f_{2}(U)\subset \subset \tilde{U}$.

Given a sufficiently small vector  $(u,v)=(u_{1}, \dots, u_{n}; v_{1}, \dots, v_{n})\in \RR^{2n}$, let the Hamiltonian $h_{(u,v)}$ on $N$, be a bump function  such that
$h_{(u,v)}(y)=0$ if $y\in N\setminus \tilde{U}$ and  if $y\in U$; then $h_{(u,v)}$ is expressed in the local coordinates as ($y=(a,b)\in U$)
 $h_{(u,v)}(a,b):= a\cdot v - b\cdot u$.
 
 Note that the time $t$ map of the Hamiltonian flow of $h_{(u,v)}$ in $U$ is  the translation 
 $$(a,b)\mapsto (a,b) + t(u,v).$$

We identify a neighborhood of zero in $T_{x}N$ and $N$ in the local coordinates on $\tilde{U}$, and let $\hat{q}=0$ in the local coordinates. So $Df_{2}(\hat{q})$ is very close to $f_{2}$ on the  neighborhood $U$ of $\hat{q}$, provided that $U$ is small enough.

Let  $\varphi^{s}=Df_2(\hat{q})|_{E^s_{\hat{q}}}$. By the assumption, the contraction bound of $\varphi^{s}$ is $1-\delta>1/2$. Then, from
Proposition \ref{pro ifs 1} we get small vectors $c_{1}, \dots, c_{l}$ in $E^s_{\hat{q}}\subset T_{\hat{q}}N$ such that the linear maps 
$\varphi^{s}_0:=\varphi^{s},
\varphi^{s}_1 :=\varphi^{s} + c_{1},  \dots , \varphi^{s}_l :=\varphi^{s} + c_{l}$ on $E^{s}_{\hat{q}}$ generate an iterated function system which is transitive in some small disk $D^s \subset E^s_{\hat{q}}$
(satisfying the covering and well-distributed properties).
  
We denote the vectors $c_{i}$ in the local coordinates as $(u^{i},v^{i})\in \tilde{U}$.
So we may define the Hamiltonians $h_{(u^{i},v^{i})}$ on $N$ as above.

Now, we define the Hamiltonian perturbation $\tilde{h}_{1}: M\x N \to \RR$,  as a bump function such that 
$$\tilde{h}_{1}(x,y)= h_{(u^{i},v^{i})}(y) \ifrm x\in\cA_{i,*}, i=1, \dots, l,$$
$$\tilde{h}_{1}(x,y)=0   \ifrm x\notin \bigcup^{l}_{i=1} \widetilde{\cA}_{i,*}.$$

Similarly, we get a family of contracting maps $\varphi^{u}_0:=Df^{-1}_2(\hat{q})|_{E^u_{\hat{q}}},$
 $\varphi^{u}_1 :=\varphi^{u} + c'_{1},$  $\dots , \varphi^{u}_l :=\varphi^{u} + c'_{l}$ on $E^{u}_{\hat{q}}$ that generate a transitive iterated function system in a small disk $D^u$.
We denote the vectors $c'_{i}$ in the local coordinates as $(u'^{i},v'^{i})\in \tilde{U}$.

The Hamiltonian perturbation $\tilde{h}_{2}: M\x N \to \RR$ is a bump function such that 
$$\tilde{h}_{2}(x,y)= h_{(u'^{i},v'^{i})}(y) \ifrm x\in\cA_{*,j}, j=l+1, \dots, 2l,$$
$$\tilde{h}_{2}(x,y)=0   \ifrm x\notin \bigcup^{2l}_{j=l+1} \widetilde{\cA}_{*,j}.$$


\subsubsection*{The symplectic blender}
Now we  explain how this construction  creates a symplectic blender: First, observe that by the construction we get a geometric model of a symbolic double-blender and therefore its  perturbations can be treated like the perturbations of a double-blender.  Thus the proof reduces to the argument in Section \ref{s geometric double blenders are double blender}. Since we preserve a symplectic structure, the double-blender becomes a symplectic blender.

Moreover, we have the following proposition which is a consequence of the
first part of Proposition \ref{pro ifs 2}. The details of the proof are left to the reader.
\begin{proposition} \label{pro B4}
Under the hypotheses of Theorem \ref{thm blender} it is possible to create a 
symplectic blender with the following additional property:
\begin{list}{{\rm \textbf{B}-\arabic{Lcount} $\;$} }{\usecounter{Lcount}}
\setcounter{Lcount}{3}
\item  Any forward and backward iteration of a $uu$-leaf ($ss$-leaf) intersecting
$\Lambda\x U$, intersects $\Lambda \x U$ in a $uu$-segment ($ss$-segment,
respectively).
\end{list}
Consequently, the set of all points whose strong (un)stable manifolds intersect
$\Lambda\x U$, is an  invariant set.
\end{proposition}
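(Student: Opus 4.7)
The plan is to invoke the first part of Proposition~\ref{pro ifs 2}, i.e., the covering property, for the two iterated function systems $\cG(\varphi^{s}_{0},\dots,\varphi^{s}_{l})$ on $D^{s}\subset E^{s}_{\hat{q}}$ and $\cG((\varphi^{u}_{0})^{-1},\dots,(\varphi^{u}_{l})^{-1})$ on $D^{u}\subset E^{u}_{\hat{q}}$ that were constructed in the proof of Theorem~\ref{thm blender}. Recall that $F_{\mu}$ is designed so that on each Markov cell $\cA_{i,j}\x N$ the $N$-fibered action of $F_{\mu}$ is a definite map $\varphi_{i,j}$ obtained from a controlled composition of the perturbed contractions $\varphi^{s}_{i}$ and perturbed expansions $\varphi^{u}_{j}$. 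Shrinking $U$ if necessary, we may assume that the covering property holds simultaneously in $U$ along both the $E^{s}$ and $E^{u}$ directions on some common subdisk $D\subset U$.

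To verify B-4 for an $ss$-leaf $L$ meeting $\Lambda\x U$ at a point $(x_{0},y_{0})$ with $y_{0}\in D$, I would argue as follows. Since $L$ is tangent to $E^{ss}\subset TM$, it sits in the single $N$-fiber $\{y_{0}\}$ and extends horizontally across the Markov rectangles, so the sub-arcs $L_{i,j}:=L\cap(\cA_{i,j}\x\{y_{0}\})$ are all $ss$-segments. Their images $F_{\mu}^{\pm 1}(L_{i,j})$ are $ss$-segments at the respective $N$-heights $\varphi_{i,j}^{\pm 1}(y_{0})$. The first part of Proposition~\ref{pro ifs 2}, applied to $y_{0}\in D$, provides a choice of $(i,j)$ for which $\varphi_{i,j}^{\pm 1}(y_{0})$ lies back in $D\subset U$, which produces an $ss$-segment of $F_{\mu}^{\pm 1}(L)$ sitting inside $\Lambda\x U$. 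Iterating this selection step-by-step yields, for every $n\in\ZZ$, an $ss$-segment of $F_{\mu}^{n}(L)$ inside $\Lambda\x U$. The $uu$-case is symmetric, this time invoking the covering property of the expanding IFS $\cG((\varphi^{u}_{j})^{-1})$ on $D^{u}$.

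The consequence about invariance is then essentially automatic. Setting $\Sigma^{u}:=\{x\mid W^{uu}(x)\cap(\Lambda\x U)\neq\vazio\}$, if $x\in\Sigma^{u}$ we take $L=W^{uu}(x)$ and B-4 yields that $F^{n}(L)=W^{uu}(F^{n}(x))$ meets $\Lambda\x U$ in a non-empty $uu$-segment, so $F^{n}(x)\in\Sigma^{u}$ for every $n$; the statement for strong stable manifolds is identical.

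The main obstacle I expect is carrying the segment (rather than merely single point) conclusion through arbitrarily many iterations. The resolution is the $C^{0}$-openness of the covering condition already used to pass from the symbolic blender (Proposition~\ref{pro skew model}) to its geometric realization: each cell $\cA_{i,j}$ is open, each image $\varphi_{i,j}^{\pm 1}(D)$ contains an open neighbourhood of its returned point of definite size, and consequently the sub-arcs selected at each step contain an open $ss$- or $uu$-sub-segment of definite size, preventing any degeneration along the iteration.
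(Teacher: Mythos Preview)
Your proposal is correct and follows essentially the same approach as the paper: the paper states only that the proposition is ``a consequence of the first part of Proposition~\ref{pro ifs 2}'' and leaves the details to the reader, and you have supplied precisely those details by invoking the covering property for the two IFSs on $D^s$ and $D^u$, selecting at each step a Markov cell whose fiber map keeps the $N$-height inside $U$, and iterating. Your observation that the skew-product form of $F_\mu$ forces the strong leaves to lie in single $N$-fibers, together with the openness argument preventing segment degeneration, fills in exactly what the paper omits.
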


\subsection{Blender inside partial hyperbolic sets}\label{blender in ph}
In the present section we give a series of lemmas about the role of blenders in obtaining robust transitivity. 
Roughly speaking, Lemmas \ref{lem blender ph 1} and \ref{lem blender ph 2} state that the points in a partially hyperbolic set connected to the blender through the strong leaves are contained in a homoclinic class. These lemmas are essential in the proof of Theorem \ref{thm A'}.

\begin{definition} Given a (not necessarily compact) partially hyperbolic invariant set $\Lambda$ with splitting  $ E^{ss} \oplus E^{cs} \oplus E^{cu} \oplus E^{uu}$, for any $z\in \Lambda$ we define the strong stable (unstable) manifold of $z$ with diameter $L$ as the set 
 $W^{ss}_L(z):= f^{-L}(W^{ss}_{loc}(f^L(z)))$
 ($W^{uu}_L(z):= f^{L}(W^{ss}_{loc}(f^{-L}(z)))$).
\end{definition}

\begin{remark} \label{rmk blender ph 2} Recall that for a blender $(p, \cB)$, if $W^{ss(uu)}_L(x)\cap W^{u(s)}(B)\neq \vazio$ then for any $y$ sufficiently close to $x$  it  also holds that $W^{ss(uu)}_L(y)\cap W^{u(s)}(B)\neq \vazio$, where $B$ is the maximal invariant set  generated by $\cB.$
\end{remark}

\begin{lemma}\label{lem blender ph 1}
Let $\Lambda$ be a (not necessarily compact) partially hyperbolic invariant set with splitting  $ E^{ss} \oplus E^{cs} \oplus E^{cu} \oplus E^{uu}$.
Assume that
\begin{enumerate}
\item there exists a symplectic blender $(p,\cB)$, such that its  maximal invariant hyperbolic set $B$ is contained in $\Lambda$  and has unstable dimension equal to $\dim(E^{cu} \oplus E^{uu})$;

\item there exists a non empty set $R \subset \Lambda$ such that if $x \in R$ then
 there are infinity many forward and backward iterates of $x$ in $\cB$.

\end{enumerate}
Then, the homoclinic class of $B$ contains the closure of $R$.
\end{lemma}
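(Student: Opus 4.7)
The goal is to show that every $x\in R$ is a limit of transverse homoclinic intersections of a fixed periodic point $p\in B$, which places $x$ in the homoclinic class of $B$. The dimensional assumption forces the stable and unstable bundles of $B$ to coincide with $E^{ss}\oplus E^{cs}$ and $E^{cu}\oplus E^{uu}$ respectively; in particular the strong laminations $W^{ss}$ and $W^{uu}$ of $\Lambda$ are uniformly contracted by $f$ and $f^{-1}$.

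First, given $x\in R$ and $\veps>0$, use hypothesis (2) to pick $m,n$ as large as needed so that $y_-:=f^{-m}(x)\in\cB$ and $y_+:=f^{n}(x)\in\cB$. By Remark \ref{rmk blender ph 1},
$$z_-\in W^{ss}_{loc}(y_-)\cap W^u_{loc}(B),\qquad z_+\in W^{uu}_{loc}(y_+)\cap W^s_{loc}(B),$$
with $z_-\in W^u_{loc}(q_-)$ and $z_+\in W^s_{loc}(q_+)$ for some $q_\pm\in B$. Since $f^m(W^{ss}_{loc}(y_-))$ is a disk inside $W^{ss}(x)$ of diameter $O(\lambda^m)$ centered at $x$, and analogously for $f^{-n}(W^{uu}_{loc}(y_+))\subset W^{uu}(x)$, for $m,n$ large the points
$$a:=f^m(z_-)\in W^{ss}(x)\cap W^u(Q_-),\qquad b:=f^{-n}(z_+)\in W^{uu}(x)\cap W^s(Q_+),$$
where $Q_-:=f^m(q_-)\in B$ and $Q_+:=f^{-n}(q_+)\in B$, both lie within $\veps$ of $x$.

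Next, build a transverse heteroclinic intersection close to $x$. At $a$ the immersed manifold $W^u(Q_-)$ is a $C^1$ disk of dimension $\dim(E^{cu}\oplus E^{uu})$ tangent to $(E^{cu}\oplus E^{uu})_a$; at $b$ the manifold $W^s(Q_+)$ is a $C^1$ disk of complementary dimension tangent to $(E^{ss}\oplus E^{cs})_b$. Because the two distributions are continuous on $\Lambda$ and sum to $T_\Lambda M$, and because $a,b$ both lie $\veps$-close to $x$, the implicit function theorem yields, for $\veps$ small enough, a unique transverse intersection $c\in W^u(Q_-)\pitchfork W^s(Q_+)$ at distance $O(\veps)$ from $x$.

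Finally, use the $\lambda$-lemma to promote this heteroclinic point to a homoclinic one. Fix a periodic point $p\in B$; transitivity of the basic set $B$ gives transverse intersections $W^u(p)\pitchfork W^s(Q_-)$ and $W^s(p)\pitchfork W^u(Q_+)$. The $\lambda$-lemma then implies that $W^u(p)$ accumulates $C^1$-densely on $W^u(Q_-)$ and $W^s(p)$ accumulates $C^1$-densely on $W^s(Q_+)$, so transverse intersections $W^u(p)\pitchfork W^s(p)$ occur within $O(\veps)$ of $c$, hence within $O(\veps)$ of $x$. Letting $\veps\to 0$ gives $x\in\overline{W^u(p)\pitchfork W^s(p)}=H(p,F)$, the homoclinic class of $B$. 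The hardest step should be the second: controlling uniformly the $C^1$ size of the local manifolds $W^u_{loc}(Q_-)$ and $W^s_{loc}(Q_+)$ and the modulus of transversality as $Q_\pm$ range over $B$; the required uniformity is secured by compactness of $B$ and continuity of the splitting on $\Lambda$, but must be spelled out carefully.
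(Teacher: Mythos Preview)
Your proof is correct and follows the same strategy as the paper: use backward (resp.\ forward) visits of $x$ to $\cB$ together with Remark~\ref{rmk blender ph 1} to bring pieces of $W^u(B)$ (resp.\ $W^s(B)$) arbitrarily close to $x$ along $W^{ss}(x)$ (resp.\ $W^{uu}(x)$), and then produce a homoclinic intersection near $x$. The only difference is in the last step: the paper invokes Remark~\ref{rmk blender ph 2} (the openness of the blender's intersection property) to obtain a point of $W^s(B)\cap W^u(B)$ near $x$ directly, whereas you construct the transverse heteroclinic intersection by hand via the implicit function theorem and then upgrade it to a homoclinic one with the $\lambda$-lemma; your route is more explicit but needs the uniform $C^1$ control you correctly flag, which the paper's argument absorbs into the blender's built-in robustness.
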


\begin{proof} Let us fix $x\in R$. From the fact that there are infinity many  backward iterates of $x$ in $\cB$ and from Remark \ref{rmk blender ph 1}, it follows that there is a sequence of points accumulating on $x$ that belong to the intersection of the unstable manifold of $B$ with the strong stable manifold of $x$.
In the same way, and using now that there are infinity many forward  iterates of $x$ in $\cB$ it follows that there is a sequence of points accumulating on $x$ that belong to the intersection of the stable manifold of $B$ with the strong unstable manifold of $x$. Using Remark  \ref{rmk blender ph 2} we conclude that there is a sequence of points accumulating on $x$ that belongs to the intersection of the stable and unstable manifold of $B.$
\end{proof}

\begin{lemma}\label{lem blender ph 2}
Let $\Lambda$ be a (not necessarily compact) partially hyperbolic invariant set with splitting  $ E^{ss} \oplus E^{cs} \oplus E^{cu} \oplus E^{uu}$.
Assume that
\begin{enumerate}
\item there exists a symplectic blender $(p,\cB)$, such that its  maximal invariant hyperbolic set $B$ is contained in $\Lambda$  and has unstable dimension equal to $\dim(E^{cu} \oplus E^{uu})$;

\item there exists a non empty set $R \subset \Lambda$ contained in the recurrent set of $f$ such that   if $x \in R$ then
\begin{itemize}
\item[(2.1)] $W^{ss}(x)\cap W^u_{loc}(B)\neq \vazio$,
\item[(2.2)] $W^{uu}(x)\cap W^s_{loc}(B)\neq  \vazio$.
\end{itemize}

\end{enumerate}
Then, the homoclinic class of the $B$ contains the closure of $R$.
\end{lemma}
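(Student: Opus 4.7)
The strategy parallels that of Lemma~\ref{lem blender ph 1}: produce sequences of points in $W^u(B)\cap W^{ss}(x)$ and $W^s(B)\cap W^{uu}(x)$ accumulating at $x$, and then use the robustness in Remark~\ref{rmk blender ph 2} to extract actual homoclinic points of $B$ converging to $x$. The only novelty compared to Lemma~\ref{lem blender ph 1} is how these accumulating sequences are obtained: there they came from iterates of $x$ itself in $\cB$, while here they are produced by iterating the strong-leaf intersection points supplied by (2.1) and (2.2), with the recurrence of $x$ used to bring these iterates back near $x$. In the volume-preserving setting where the lemma is applied, a recurrent point is automatically two-sided recurrent (Poincar\'e applied to $f$ and $f^{-1}$), so fix $n_k\to\infty$ with $f^{\pm n_k}(x)\to x$.

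Given $x\in R$, pick $y_-\in W^{ss}(x)\cap W^u_{loc}(B)$ and $y_+\in W^{uu}(x)\cap W^s_{loc}(B)$, and set $a_k:=f^{n_k}(y_-)$, $b_k:=f^{-n_k}(y_+)$. Invariance of $W^u(B)$ and $W^s(B)$ gives $a_k\in W^u(B)$ and $b_k\in W^s(B)$, and uniform contraction along the strong stable and strong unstable foliations yields $d(a_k,f^{n_k}(x))\to 0$ and $d(b_k,f^{-n_k}(x))\to 0$, so $a_k,b_k\to x$. In particular $a_k\in W^{ss}_{\delta_k}(f^{n_k}(x))$ for some $\delta_k\to 0$, and the intersection $W^{ss}_{\delta_k}(f^{n_k}(x))\cap W^u(B)$ is nonempty because it contains $a_k$.

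Now invoke Remark~\ref{rmk blender ph 2}: for each $k$ and every $y$ in an appropriate neighborhood of $f^{n_k}(x)$, the set $W^{ss}_{\delta_k}(y)\cap W^u(B)$ remains nonempty. Since both $b_j\to x$ and $f^{n_k}(x)\to x$, a diagonal choice $j=j(k)$ places $b_{j(k)}$ in that neighborhood; select $c_k\in W^{ss}_{\delta_k}(b_{j(k)})\cap W^u(B)$, so $d(c_k,b_{j(k)})\leq\delta_k$ and hence $c_k\to x$. By construction $c_k\in W^u(B)$; and since $c_k\in W^{ss}(b_{j(k)})$ with $b_{j(k)}\in W^s(B)$, the forward orbit of $c_k$ shadows that of $b_{j(k)}$ and therefore accumulates on $B$, so $c_k\in W^s(B)$ as well. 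Hence $\{c_k\}$ is a sequence of transverse homoclinic points of $B$ converging to $x$, showing that $x$ lies in the homoclinic class of $B$. Since $x\in R$ was arbitrary and the homoclinic class is closed, $\overline R$ is contained in the homoclinic class of $B$.

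The main obstacle is the quantitative use of Remark~\ref{rmk blender ph 2} as $\delta_k$ shrinks: the neighborhood of $f^{n_k}(x)$ on which the remark applies itself depends on $\delta_k$, so the diagonal choice $j(k)$ must interleave the two limits $f^{n_k}(x)\to x$ and $b_j\to x$ carefully, using the $C^0$-continuity of the strong stable foliation on compact pieces of $\Lambda$. Everything else is a direct transcription of the proof of Lemma~\ref{lem blender ph 1}.
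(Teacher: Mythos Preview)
Your argument is correct and follows the same template as the paper's proof: produce points of $W^u(B)$ and of $W^s(B)$ accumulating on $x$, then use Remark~\ref{rmk blender ph 2} to manufacture genuine homoclinic points of $B$ near $x$. The difference lies in how the accumulating sequences are obtained. You push the fixed intersection point $y_-\in W^{ss}(x)\cap W^u_{loc}(B)$ forward along positive return times, which lands you on $W^{ss}_{\delta_k}(f^{n_k}(x))$ with $\delta_k\to 0$ and forces the diagonal interleaving you describe. The paper instead fixes $L$ once so that $W^{ss}_L(x)\cap W^u_{loc}(B)\neq\varnothing$, applies Remark~\ref{rmk blender ph 2} with that \emph{fixed} $L$ at the backward returns $f^{-n_k}(x)\to x$ to get $z_k\in W^{ss}_L(f^{-n_k}(x))\cap W^u(B)$, and then pushes $z_k$ forward by $f^{n_k}$ into $W^{ss}(x)\cap W^u(B)$ near $x$. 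Keeping $L$ fixed throughout sidesteps precisely the quantitative obstacle you flag; your route also works, but needs the extra continuity-of-foliations care you mention. The combining step (turning the two accumulating sequences into homoclinic points via Remark~\ref{rmk blender ph 2}) is the same in both proofs, and the paper simply defers it to the last line of the proof of Lemma~\ref{lem blender ph 1}.
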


\begin{proof} If $W^{ss}(x)\cap W^u_{loc}(B)\neq  \vazio$, there is $L$ sufficiently large such that
 $W^{ss}_L(x)\cap W^u_{loc}(B)\neq  \vazio$. From Remark \ref{rmk blender ph 2} and the fact that $x$ is recurrent, it follows that there is a sequence
$\{f^{-n_k}(x)\}$ such that $ W^{ss}_L(f^{-n_k}(x))\cap W^u(B)\neq  \vazio$ and therefore, there is a sequence of points accumulating on $x$ that belong to the intersection of the unstable manifold of $B$ with the strong stable manifold of $x$.  Arguing on the same way, it follows that there is a sequence of points accumulating on $x$ that belong to the intersection of the stable manifold of $B$ with the strong unstable manifold of $x$.
The lemma is now obtained as in the proof of Lemma \ref{lem blender ph 1}.
\end{proof}


\section{Parametric version of Theorem \ref{thm A}} \label{sec proof A}


In this section,  we prove a parametric version of  Theorem \ref{thm A} in a more general context. The proof of our main theorems are based on this theorem and its proof.

\begin{theorem} \label{thm A'}
Let $M$ and $N$ be two symplectic manifolds (not necessarily compact), and $1\leq r\leq \infty$. Let $f_{1}\in \mathrm{Diff}^{r}_{\omega}(M)$ such that there exists an open set $V\subset M$ which maximal invariant set $\Lambda$ is a nontrivial topologically mixing hyperbolic  compact set. Let  $f_{2}\in \mathrm{Diff}^{r}_{\omega}(N)$ such that: 
\begin{list}{{\rm (\alph{Lcount})}}{\usecounter{Lcount}}
\item $f_2$ is dominated by $f_1\vert_{\Lambda}$, and
$f_2$ has a $\delta$-weak hyperbolic periodic point for some
positive $\delta=\delta(f_1,\dim(N))$ close to zero. 
\item For any $\tilde{f}_2$ sufficiently $C^r$ close to $f_2$,
$\Omega(\tilde{f}_2)=N$.
\end{list}

\noindent
Then there is a $C^{r}$ arc $\{F_{\mu}\}_{\mu\in [0,1]}$ of $C^{r}$ symplectic
diffeomorphisms on $M\x N$, such that
$F_{0}=f_{1}\x f_{2}$, and there exists a periodic orbit $P$ such that  
for all $\mu\in(0,1]$ the following hold, 
\begin{enumerate}
\item $P$ is periodic for $F_\mu$, $H(P;F_\mu)\bigcap_{n\in\ZZ} F_\mu^n(V\x N) = \Lambda \x N$ and it is robustly strictly  topologically mixing (See Definition \ref{def RT}).
\item If $G$ is close to $F_\mu$, the set  $\Gamma_G:=H(P_G;G)\bigcap_{n\in\ZZ} G^n(V\x N)$ is robustly strictly  topologically mixing.
\item For any compact neighborhood $N_c\subset N$ the map $G\in \cU \mapsto \Gamma_G \bigcap (V\x N_c)$ defines a continuation of $\Lambda\x N_c$, for some neighborhood  $\cU$ of  $F_\mu$. 

\end{enumerate}
\end{theorem}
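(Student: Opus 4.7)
\medskip

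\noindent\textbf{Proof plan.} The strategy is to combine the symplectic blender construction of Theorem~\ref{thm blender} with the transitive IFS of recurrent diffeomorphisms from Theorem~\ref{thm rec}, and then to invoke the homoclinic-class criterion of Lemma~\ref{lem blender ph 2}. First, I would replace $f_1$ by a suitable iterate $f_1^k$ so that $f_1|_\Lambda$ is conjugate to a full shift on $d+1$ symbols with $d$ as large as desired, and so that $f_2$ is dominated by $f_1|_\Lambda$ (as in the setup of Theorem~\ref{thm blender}). Since $\Lambda$ is nontrivial and topologically mixing, it contains a hyperbolic periodic point $\hat p$ with transversal homoclinic intersections; after suitable iteration and relabelling we may assume $(\hat p,\hat q)$ is a fixed point of $F_0=f_1\x f_2$, where $\hat q$ is the $\delta$-weak hyperbolic point of $f_2$. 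The arc $F_\mu$ is constructed by composing small local Hamiltonian translations supported on disjoint Markov blocks $\cA_{i,*}$ and $\cA_{*,j}$ (as in~(\ref{equ aij})--(\ref{equ aiJ})), with the symbol set $\{0,1,\dots,d\}$ partitioned into three pairwise disjoint parts $\cI_{bl},\cI_{ss},\cI_{uu}$: the translations indexed by $\cI_{bl}$ are exactly those of Theorem~\ref{thm blender}, producing a symplectic blender $(P_\mu,\cB)$ around the continuation of $(\hat p,\hat q)$, while the translations indexed by $\cI_{ss}$ and $\cI_{uu}$ are chosen so that, read along an $ss$-leaf (resp.\ a $uu$-leaf) of the skew product $\Lambda\x N$, they realize the finitely many generators of IFSs $\cG_{ss}$ and $\cG_{uu}$ on $N$ given by Theorem~\ref{thm rec} applied to the recurrent diffeomorphism $f_2$.

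Next I exploit the dictionary between locally constant skew products and iterated function systems of Propositions~\ref{pro skew 3}--\ref{pro skew 4}: the strong stable (resp.\ unstable) leaf through $(x,y)\in\Lambda\x N$ projects onto a backward $\cG_{ss}$-orbit (resp.\ forward $\cG_{uu}$-orbit) of $y$. By Theorem~\ref{thm rec}(2), for every compact $N_c\subset N$ and every open $V_0\subset N$ one has $N_c\subset\orbit^{-}_{\cG_{ss}}(V_0)\cap\orbit^{+}_{\cG_{uu}}(V_0)$; choosing $V_0$ to be a small neighborhood of the central projection of the local unstable (resp.\ stable) set of the blender's maximal invariant set $B$ yields that every point of $\Lambda\x N_c$ has its strong stable leaf meeting $W^u_{loc}(B)$ and its strong unstable leaf meeting $W^s_{loc}(B)$. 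Hypothesis~(b) provides $\Omega(f_2)=N$, so the set $R$ of recurrent points of $F_\mu$ in $\Lambda\x N$ is dense there; Lemma~\ref{lem blender ph 2} then forces $H(P_\mu;F_\mu)\supset\overline{R}=\Lambda\x N$, and combined with the trivial inclusion $H(P_\mu;F_\mu)\cap\bigcap_{n\in\ZZ}F_\mu^n(V\x N)\subset\Lambda\x N$ this gives the equality in~(1). Robustness of the conclusion rests on three ingredients: the $C^1$-robustness of the blender (Section~\ref{s geometric double blenders are double blender}), the $C^0$-robustness of the covering/well-distribution properties established in Proposition~\ref{pro ifs 1}, and the robust recurrence postulated in~(b); repeating the argument for $G$ close to $F_\mu$ and a fixed compact $N_c$ yields items~(2)--(3), with Hausdorff-continuity of $G\mapsto\Gamma_G\cap(V\x N_c)$ inherited from the Hausdorff continuity of the maximal invariant set and of the local stable/unstable manifolds of $B_G$. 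Strict topological mixing, rather than merely transitivity, is obtained by combining the topological mixing of $f_1|_\Lambda$ with the IFS density: given open $U_1,U_2\subset\Gamma_G$, a sufficiently large forward iterate of $U_1$ contains a full $u$-strip inside $\cB$ (mixing in the base plus IFS density in the fiber), and Lemma~\ref{lem blender def} then forces $G^n(U_1)\cap U_2\neq\vazio$ for all large $n$.

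The main obstacle will be step one: the \emph{local} perturbations prescribed by Theorem~\ref{thm blender} tweak only a few Markov blocks adjacent to $\hat q$, while the IFS transitivity demanded by Theorem~\ref{thm rec} requires perturbations whose fiber projections generate a semigroup acting transitively on all of $N$. Reconciling these two requirements within a \emph{single} $C^r$ arc $F_\mu$ forces one to (i) choose $k$ large enough that the symbol set has enough room to host the three families $\cI_{bl},\cI_{ss},\cI_{uu}$ on disjoint blocks, and (ii) verify that the additional translations indexed by $\cI_{ss}\cup\cI_{uu}$ do not spoil the covering/well-distributed conditions that make the $\cI_{bl}$-blocks produce a blender. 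A secondary difficulty is the non-compactness of $N$: one cannot expect $\Lambda\x N$ to be a single locally maximal set, which is exactly why item~(3) only requires a continuation on the compact pieces $V\x N_c$, and why the robust recurrence hypothesis~(b) is indispensable, replacing the compactness that would otherwise be needed to apply the recurrent-IFS machinery of Theorem~\ref{thm rec}.
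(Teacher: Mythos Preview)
Your overall architecture matches the paper's: partition the Markov symbols into a ``blender'' block and an ``IFS'' block, build the symplectic blender via Theorem~\ref{thm blender} on the first, realize a recurrent IFS on the second via the skew-product/IFS dictionary of Propositions~\ref{pro skew 3}--\ref{pro skew 4}, and finish with the homoclinic-class criteria of Section~\ref{blender in ph}. But there is one genuine gap.

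Your appeal to Theorem~\ref{thm rec}(2) has the quantifiers in the wrong order. That statement reads: given an open $V_0$ and a bounded $N_c$, there exist $k\in\NN$ and $T_1,\dots,T_k$ with $N_c\subset\orbit^-_\cG(V_0)$; the number $k$ of generators depends on $N_c$. You, however, fix the IFS first (it must be fixed, since it is hard-wired into the arc $F_\mu$ via finitely many Markov blocks) and then claim the inclusion for \emph{every} compact $N_c$. When $N$ is non-compact this is simply false with a fixed finite IFS, so you cannot conclude that \emph{every} point of $\Lambda\times N$ has its strong leaves connected to the blender, and the hypothesis on $R$ in Lemma~\ref{lem blender ph 2} is not verified.

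The paper fixes this by using Theorem~\ref{thm rec}(1) instead: three generators $T,T_1,T_2$ already give a \emph{transitive} IFS, hence only an open dense set $\cR_\mu\subset N$ of points whose $\cG$-orbit enters the prescribed neighborhood $V_0$ of $\hat q$ (Proposition~\ref{proof minimal}). The ingredient you did not invoke is then Proposition~\ref{pro B4} (property \textbf{B}-4): once a strong leaf meets $\Lambda\times U$, \emph{all} its iterates do. This upgrades ``some iterate of the $uu$-leaf meets $\cB$'' to ``every iterate meets $\cB$'', after which Lemma~\ref{lem blender ph 1} applies on the open dense invariant set $\hat\cR_\mu\subset\Lambda\times N$, and the equality in item~(1) follows because a homoclinic class is closed. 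Robustness and the continuation in item~(3) are then obtained not for a single $N_c$ but by exhausting $\Lambda\times N$ through the compact invariant sets $\Gamma_{F_\mu,L}$ of points whose strong leaves of diameter $\le L$ meet the blender (Section~\ref{s robust thm A}); this is precisely how the non-compactness of $N$ is accommodated in Definition~\ref{def RT}.
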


\begin{remark}\label{rem thm A' bis}

We want to highlight that  in Theorem \ref{thm A'} it is not assumed that the manifolds $M$ and $N$ are compact. However the hypotheses of Theorem \ref{thm A'} are contained in the hypotheses of Theorems \ref{thm A}, \ref{thm B}, \ref{thm C} and \ref{thm D}. In particular, observe that in Theorem \ref{thm A'} we assume that  for any $\tilde{f}_2$ sufficiently $C^r$ close to $f_2$ it holds that $\Omega(\tilde{f}_2)=N$, which is immediately satisfied whenever either $ M$ is compact or has finite volume.

\end{remark}

\begin{remark}\label{rem thm A'}

First observe that for any diffeomorphism  $F_\mu$, with $\mu>0$,  it holds that the whole set $\La\x N$ is transitive. In this sense, those maps satisfy a stronger property than the ones in the thesis of Theorem \ref{thm A}. In fact, the ones in the thesis of Theorem \ref{thm A} are obtained as perturbations of the family map $\{F_{\mu}\}_{\mu\in (0,1]}.$
Also observe, that the   non-wandering hypothesis (b) is obviously satisfied if the manifold $N$ is compact or has a finite volume.

\end{remark}

Before giving the proof of Theorem \ref{thm A'} we need some results stated in Section \ref{s hps}, about   the persistence of normally hyperbolic laminations.  This  gives us the right  framework to obtain the continuation of the set in the thesis of Theorem \ref{thm A'}. The proof of Theorem \ref{thm A'} is postponed to Section \ref{proof thm A'}.


\subsection{Persistence of normally hyperbolic laminations} \label{s hps}
Now, we recall a technical version of the main results of Hirsch-Pugh-Shub \cite{hps} on persistence of
normally hyperbolic laminations, extended to the non-compact embedded case. As we said before, the result provides the existence  and precise characterization of the continuation of a partially hyperbolic sets (for the definition of continuation used in the present context, see Section \ref{def strong continuation}).

\begin{definition}[strong continuation]\label{def strong continuation}
An invariant set $X \subset M$ of $f$ has {\it strong continuation} in $\cD^r$, if there exist an open neighborhood $\cU$ of $f$ in $\cD^r$, and a continuous map $\Phi:\cU \to
\mathcal{P}(M)$ such that, $\Phi(f)=X$, and for any $g\in \cU$, the set
$\Phi(g)\subset M$ is homeomorphic to $X$ and  invariant for $g$. 
Then, $\Phi(g)$ is called the \textit{strong  continuation} of $X$ for $g$.
Here, $\mathcal{P}(M)$ is the space of all subsets of $M$ with the Hausdorff
topology.
\end{definition} 
Compare this definition with Definition \ref{def continuation}. In the present one, it is required that $\Phi(g)$ is homeomorphic to $X$. 

\begin{theorem}[{\bf [HPS]}]  \label{thm hps}
Let $M$ and $N$ be two boundaryless manifolds (not necessarily compact). Let $1\leq r\leq \infty$ and  let $f_{1}\in \mathrm{Diff}^{r}_{\omega}(M)$ such that there exists an open set $V\subset M$ which maximal invariant set $\Lambda$ is a nontrivial topologically mixing hyperbolic  compact set. 
Let  $f_{2}\in \mathrm{Diff}^r(N)$ be
dominated by $f_1\vert_{\Lambda}$.
Then the invariant set  $\Lambda \x N$ of  $F_0 = f_1\x f_2$ has a unique strong continuation for any small perturbation of $F_0$
in $\mathrm{Diff}^r(M\x N)$.
\end{theorem}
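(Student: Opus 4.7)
The plan is to adapt the classical Hirsch--Pugh--Shub graph transform argument to the non-compact setting. First I would observe that the hypotheses endow $\Lambda\x N$ with the structure of a partially hyperbolic invariant set for $F_0=f_1\x f_2$: the splitting is $E^s\oplus TN\oplus E^u$, where $E^s, E^u$ are the hyperbolic subbundles of $f_1$ over $\Lambda$ (lifted trivially in the second factor) and the center bundle is $TN$; the domination of $f_2$ by $f_1|_\Lambda$ is precisely the normal-hyperbolicity condition for the lamination $\cF$ whose leaves are $\{x\}\x N$, $x\in\Lambda$, parametrized by the compact base $\Lambda$.

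Next I would set up the graph transform in a Banach space of sections. Using a tubular neighborhood of $\cF$, one identifies nearby laminations with maps $\sigma:\Lambda\x N\to E^s\oplus E^u$ of finite sup norm; let $\cB$ denote this space. For $G$ sufficiently $C^r$-close to $F_0$ (in the finite-norm convention adopted throughout the paper), the image $G(\mathrm{graph}(\sigma))$ remains inside the tubular neighborhood and can be reprojected to a graph over $\Lambda\x N$, defining an operator $\mathcal{T}_G:\cB\to\cB$. Normal hyperbolicity forces $\mathcal{T}_G$ to be a uniform contraction in the sup norm, with rate governed by the domination gap between $Df_2$ and the hyperbolic rates on $\Lambda$. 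The unique fixed point $\sigma_G$ yields an invariant set $\Phi(G)\subset M\x N$; the map $(x,y)\mapsto(x,y)+\sigma_G(x,y)$ is a continuous injection close to the identity and hence a homeomorphism onto $\Phi(G)$, so $\Phi(G)$ is homeomorphic to $\Lambda\x N$. Invariance is the fixed-point equation, continuity of $G\mapsto\Phi(G)$ in the Hausdorff topology follows from continuous dependence of fixed points of contractions on parameters, and uniqueness of the strong continuation is immediate from uniqueness of the fixed point. Higher leafwise and transverse regularity are then obtained, to the extent needed, via the standard $C^r$ section theorem applied on the relevant jet bundle.

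The main obstacle is precisely the non-compactness of $N$. Compactness of $\Lambda$ guarantees uniform hyperbolicity constants on the base of the lamination, but along each leaf the classical HPS proof uses compactness to control sizes of perturbations. One replaces ``continuous sections of a compact bundle'' by ``bounded continuous sections,'' exploiting the standing assumption (stated at the end of Section~\ref{s pre}) that all diffeomorphisms on non-compact manifolds have finite norm in the corresponding $C^r$ topology. This ensures that $G-F_0$ and its derivatives remain uniformly small on the whole non-compact product $\Lambda\x N$, so that $\mathcal{T}_G$ is well defined on all of $\cB$ and uniformly contracting despite the infinite extent of $N$; without this finite-norm convention the graph transform could fail to preserve boundedness and the fixed point would not exist.
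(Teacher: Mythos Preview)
Your proposal is correct and aligns with the paper's own treatment: the paper does not give a self-contained proof but simply cites \cite{hps} for the compact case and observes that compactness enters only through uniform estimates on contraction/expansion rates, which are guaranteed here by the uniform $C^r$ topology (with references to \cite{bw}, \cite[Appendix B]{dls06}, \cite[Appendix A]{dgls} for similar non-compact adaptations). Your graph-transform sketch is in fact more detailed than what the paper provides, and your identification of the finite-norm convention as the key to handling non-compact $N$ is exactly the point the paper makes.
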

More precisely, the following hold,
\begin{list}{\textbf{H}-\arabic{Lcount} $\;$}{\usecounter{Lcount}}

\item There is a neighborhood $\mathcal{U}\subset$ Diff$^{1}(M\times N)$ of $F$ such that every $G\in \mathcal{U}$ has a (locally maximal) invariant $\Gamma_{G}$ homeomorphic to $\Lambda\times N$ and is a continuation of  $\Gamma_{F_0}$. 

\item  There is a $G$-invariant lamination on $\Gamma_{G}$ by manifolds diffeomorphic to $N$.
So $G$ induces a homeomorphism $\tilde{G}$ on the quotient of $\Gamma_{G}$ such that  $\tilde{G}$ is conjugate to $f_1|_{\Lambda}$.

\item From previous item, $G$ restricted to $\Gamma_{G}$ is conjugate to a skew product $G^{*} :~ (x,w)\longmapsto (f_1(x), g_{x}(w))$ on $\Lambda\times N$, which depends continuously on $G$.
\end{list}

This result, in a more general setting,  is proved in \cite{hps}  in the compact case. However, as remarked there, the compactness is used only  to obtain uniform estimates on the functions involved in the proof (i.e. the rate  of contraction and expansion)  and the results carry over as long as such uniform estimates hold. This is the case in our context since we consider the uniform $C^r$ topology.
See \cite{bw}, \cite[Appendix B]{dls06} and \cite[Appendix A]{dgls} in which the similar situation is treated.

\subsection{Proof of Theorem \ref{thm A'}}
\label{proof thm A'}  
The main idea of the proof is to find perturbations of $F_0=f_1 \x f_2 $ so that the following hold:

(i) the existence of a symplectic blender,

(ii)  the ``minimality'' of (strong) stable and unstable foliations in the partially hyperbolic set $\Lambda \x N$.

These properties imply the transitivity (or even topological mixing) of the set  $\Lambda \x N$ in a robust fashion.
Following this approach, the proof  is constructive and it is divided in four parts. 
First, in Section \ref{s pert thm A} we introduce the perturbations which are  similar to the ones used in the construction of the family $F_{\mu}$ in the proof of Theorem \ref{thm blender}. The perturbations are done using the result stated in Section \ref{s hps}.
Then in the  sequel subsections we prove that the perturbed systems satisfy the desired properties. 
In Section \ref{s blender thm A}, applying Theorem \ref{thm blender} we show the existence of a symplectic blender. 
Then in Section \ref{s min thm A} we use the results of iterated function systems of recurrent diffeomorphisms (see Section  \ref{s ifs rec}) to prove that the strong stable and unstable manifolds of almost all points in the central manifold intersect the constructed blender. 
In Section \ref{s robust thm A}, using the lemmas in Section \ref{blender in ph}, we show that this property is robust under small perturbations.

\Proofof{Theorem \ref{thm A'}} Let $\hat{p}$ be a hyperbolic periodic point of $f_1$ with transversal homoclinic intersection, and  $\hat{q}$ be a $\delta$-weak hyperbolic periodic point of $f_2$. Let $P_0=(\hat{p}, \hat{q})$.
Let $V \subset M$ be a small open disk which contains $\hat{p}$ and a transversal homoclinic point associated to it, such that for some $k\in \mathbb{N}$, $\Lambda_0:=\bigcap_{n\in \mathbb{Z}}f_1^{kn}(V)$ is an invariant hyperbolic compact set of saddle type for $f_1^k$. 
By choosing $V$ suitable and $k$ large enough, we may suppose that $f^k_1\mid_{\Lambda_0}$ is conjugate to a shift of $d+1$ symbols $\{0,1, \dots, d\}$. Moreover, we can assume that the elements of Markov partition of $\Lambda_0$ are contained in open sets such that their closure are pairwise disjoint. If $k$ is large then $d$ can be taken large. 
If $k$ is large enough, by taking $f_1^k$, and $f_2^k$ instead of $f_1$ and $f_2$, we may assume that  $P_0=(\hat{p}, \hat{q})$ is a fixed point of  $F_{0}=f_{1}\times f_{2}$ and $\Lambda_0\subset \Lambda$ are $f_1$-invariant. Moreover on $\Lambda_0:=\bigcap_{n\in \mathbb{Z}}f_1^{n}(V),$ $f_1$ is conjugate to the shift of symbols $\{0,1, \dots, d\}$.

As in the proof of Theorem \ref{thm blender} recall that for a (linear) contraction map of $\RR^{n}$, ($2n= \dim(N)$) with contraction bound equal to $1/2$, 
Proposition \ref{pro ifs 1} 
gives a number $l$ as the required number of elements of the IFS needed to obtain transitivity in some small disk.

We choose $k$ large enough so that $d\geq2l+4$ (observe that the number $d$ here is larger than the on in Theorem \ref{thm blender}).

Again, as in the proof of Theorem \ref{thm blender}, once we substitute $f_{2}$ by $f_2^k$ it follows that $m(Df^k|_{E^s})> (1-\delta)^k$. Therefore, the constant  $\delta=\delta(f_1,\dim(N))$ is chosen close to zero in such a way that $(1-\delta)^k>1/2$.

From now on, we assume that $\hat{q}$  is a $\delta$-weak hyperbolic fixed point of $f_{2}$ with $\delta<1/2$, $\hat{p}\in \Lambda_0$ is a hyperbolic fixed point of  $f_{1}$, and
$\Lambda_0$ is $f_1$-invariant and it is conjugate to a shift with $d+1$ symbols as above.

In what follows, we are going to focus on the set $\Lambda_0\x N.$


\subsubsection{The perturbations}\label{s pert thm A}

Let $\zeta>0$ is small enough (will be chosen later) and $\varepsilon : [0,1] \longrightarrow [0,\zeta]$ such that  $\varepsilon : x\mapsto x/\zeta$.

For $\mu \in [0,1]$,
$$F_{\mu}:= \Psi^{\varepsilon(\mu)}\circ F_0 \circ  \Phi^{-\varepsilon(\mu)},$$
where $ \Psi^t$ and $\Phi^{t}$ are the time $t$ map of the Hamiltonians $h_{1}$ and $h_{2}$, respectively, to be chosen later. It is clear that $F_0=f_1\times f_2$.

As in Theorem \ref{thm blender} using the Darboux Theorem we select local coordinates in a neighborhood $\tilde{U}$ of $\hat{q}$ in $N$, of the form $(a_{1}, \dots, a_{n}; b_{1}, \dots, b_{n})$ such that on $\tilde{U}$ the symplectic form is  $\Sigma_{i=1}^{n} da_{i}\wedge db_{i}$. This coordinates will be useful to define our local perturbation. 

Let $U\subset \subset \tilde{U}$ be a small neighborhood of $\hat{q}$ such that $f_{2}(U)\subset \subset \tilde{U}$.

As in the proof of Theorem \ref{thm blender}, we set  
$$\cA_{i,*}=\bigcup^{d}_{j=0} \cA_{ij} \andrm \cA_{i,\cJ}=\bigcup_{j\in \cJ} \cA_{ij}$$
 where $\cJ \subset \{0,1, \dots, d\}$. Similar notations shall be used for $\cA_{*,j},$ $\cA_{\cI,j}, \cA_{\cI,\cJ},$   $\widetilde{\cA}_{i,*},$  $\widetilde{\cA}_{*,j}$ and  $\widetilde{\cA}_{\cI,\cJ}$.

Now, we separate the support of perturbations  in two groups: 
\begin{enumerate}
 \item one composed by $\cA_{i,*},i=1, \dots, l,$ and $\cA_{*,j}, j=l+1, \dots, 2l;$
\item  the other composed by $\cA_{\cJ_1,\cJ_1}\cup\cA_{\cJ_2,\cJ_2}$, where  $\cJ_1=\{0, 2l+1, 2l+2\}$ and $\cJ_2=\{0, 2l+3, 2l+4\}.$
\end{enumerate}
In the first group of sets, the same perturbations done in the proof of Theorem \ref{thm blender}  is performed along the center direction. This shall provide us a blender (see Section \ref{s blender thm A}).  In the second ones, other perturbations (explained below) are introduced such that they provide the minimality of the unstable and stable foliations (see Section \ref{s min thm A}). 

So, using the same definitions of the proof of Theorem \ref{thm blender}  we define the Hamiltonian perturbation $\tilde{h}_{1}: M\x N \to \RR$,  as a bump function such that 
$$\tilde{h}_{1}(x,y)= h_{(u^{i},v^{i})}(y) \ifrm x\in\cA_{i,*}, i=1, \dots, l,$$
$$\tilde{h}_{1}(x,y)=0   \ifrm x\notin \bigcup^{l}_{i=1} \widetilde{\cA}_{i,*}.$$

Similarly,  the Hamiltonian perturbation $\tilde{h}_{2}: M\x N \to \RR$ is a bump function such that 
$$\tilde{h}_{2}(x,y)= h_{(u'^{i},v'^{i})}(y) \ifrm x\in\cA_{*,j}, j=l+1, \dots, 2l,$$
$$\tilde{h}_{2}(x,y)=0   \ifrm x\notin \bigcup^{2l}_{j=l+1} \widetilde{\cA}_{*,j}.$$

Now we explain the perturbation performed in the set $\cA_{\cJ_1,\cJ_1}\cup\cA_{\cJ_2,\cJ_2}$ ($\cJ_1=\{0, 2l+1, 2l+2\}, \, \cJ_2=\{0, 2l+3, 2l+4\}).$

Let $\hat{h}_1$ and $\hat{h}_2$ be two integrable Hamiltonians  on $N$, close to the identity, such that the time one maps of their corresponding Hamiltonian flows satisfy the properties (1) and (2) in Lemma \ref{lem tori}.

We define the Hamiltonian  $\tilde{h}_{3}: M\x N \to \RR$ as a bump function such that 
$$\tilde{h}_{3}(x,y)= \hat{h}_{1}(y) \ifrm x\in\cA_{2l+1,j} , ~ j\in\{0,2l+1, 2l+2\},$$
$$\tilde{h}_{3}(x,y)= \hat{h}_2(y) \ifrm x\in\cA_{2l+2,j},  j\in\{0,2l+1, 2l+2\}, $$
$$\tilde{h}_{3}=0   \ifrm x\notin  \widetilde{\cA}_{i,j},   i\in\{2l+1,2l+2\}, j\in \{0,2l+1, 2l+2\}.$$

Similarly, we define the Hamiltonian  $\tilde{h}_{4}: M\x N \to \RR$ as a bump function such that 
$$\tilde{h}_{4}(x,y)= -\hat{h}_{1}(y) \ifrm x\in\cA_{i,2l+3} , ~ i\in\{0,2l+3, 2l+4\},$$
$$\tilde{h}_{4}(x,y)= -\hat{h}_2(y) \ifrm x\in\cA_{i,2l+4}, ~ i\in\{0,2l+3, 2l+4\}, $$
$$\tilde{h}_{4}=0   \ifrm x\notin  \widetilde{\cA}_{i,j}, ~  i\in\{0,2l+3, 2l+4\}, j\in \{2l+3, 2l+4\}.$$

Observe that the support of these Hamiltonians are disjoint from the supports of $\tilde{h}_{1}$ and $\tilde{h}_{2}$. 
Now we are able to define the Hamiltonians  $h_{1}$ and $h_{2}$. 
Let $\epsilon>0$ small enough, we set
$$h_{1}= \tilde{h}_1 +\epsilon\tilde{h}_3,$$
$$h_{2}= \tilde{h}_2 +\epsilon\tilde{h}_4.$$
In the next two sections, we show how the above perturbation maps exhibit symplectic blenders and ``almost'' minimality of the strong foliations.

\begin{figure}[]
\begin{center}
\includegraphics[width=10cm]{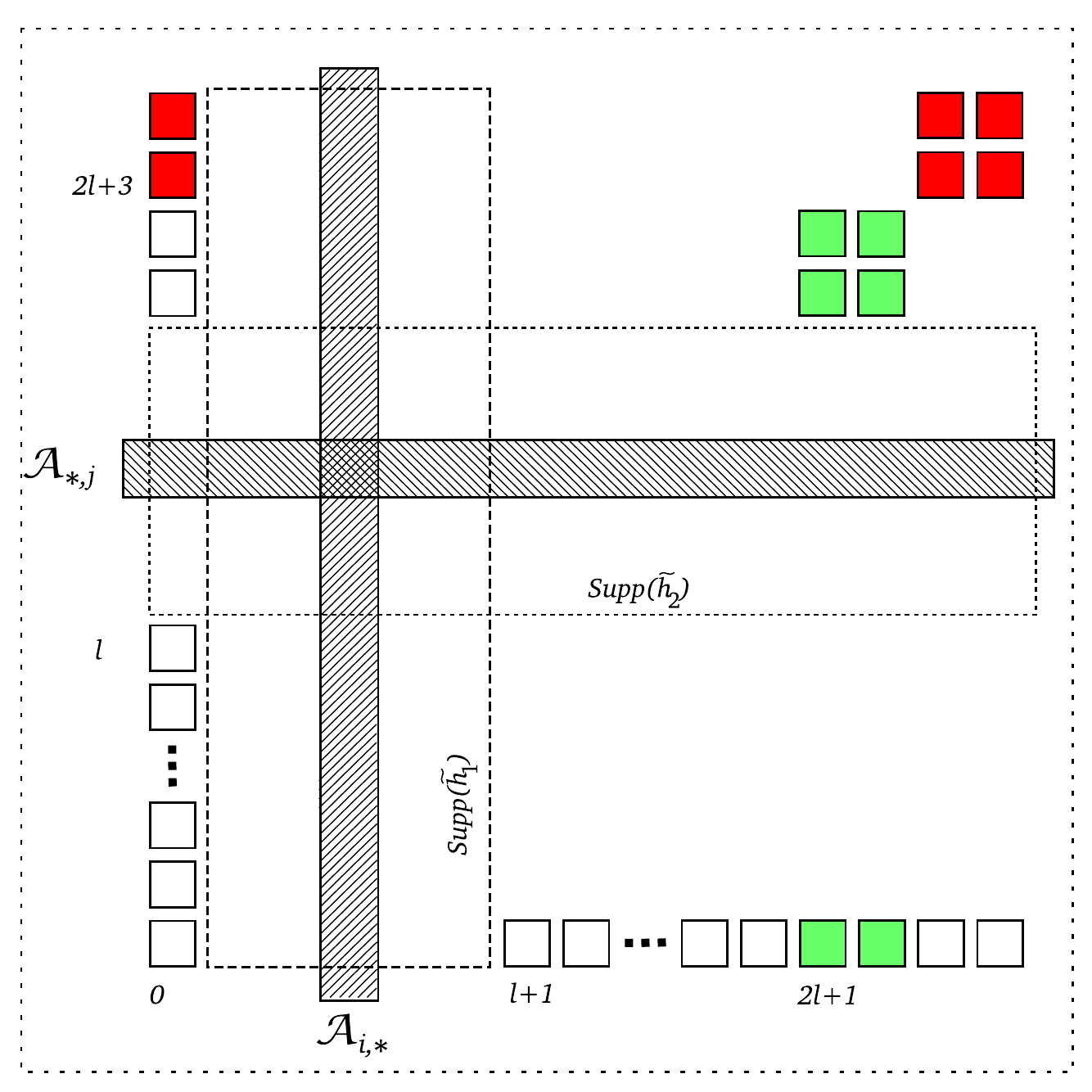}
\label{fig thm A}
\caption{Support of local perturbations projected to $\Lambda_0$. The blocks with the same color are in the support of the same Hamiltonians. No perturbation is made in white blocks.}
\end{center}
\end{figure}


\subsubsection{The symplectic blender}\label{s blender thm A}
We may repeat the proof of Theorem \ref{thm blender} for the family $F_{\mu}$ constructed in Section \ref{s pert thm A} to obtain a symplectic blender $\cB$. In addition, Proposition \ref{pro B4} also holds  for this family.


\subsubsection{Almost minimality of stable and unstable foliations} \label{s min thm A}
In this section it is shown that the strong stable and unstable manifolds of an open and dense set of points in the central manifold $N_{_0}:= \{\hat{p}\}\x N$ intersects the constructed blender. From that, we obtain the existence of an open and dense set of points in $\Lambda\x N$ such that their strong (stable and unstable) manifolds are connected to the blender. We refer to this property by the almost minimality of the strong stable and strong unstable foliations.

\begin{proposition}\label{proof minimal}
Let us fix  $\mu>0$. Then there is an open and dense set $\cR_\mu \subset N$  such that for every $q \in \cR_\mu $
and for any $n\in \ZZ$ it follows that 
$W^{uu}(F^n_{\mu}(\hat{p},q)) \cap \cB \neq \varnothing$ and $W^{ss}(F^n_{\mu}(\hat{p},q)) \cap \cB \neq \varnothing$.
\end{proposition}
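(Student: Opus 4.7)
The plan is to reduce the claim to transitivity of a fiber iterated function system on $N$, exploiting the structure built into the perturbations $\tilde h_3,\tilde h_4$. First I would apply Theorem \ref{thm hps} to conjugate $F_\mu$ on the maximal invariant set in $V\x N$ to a skew product $\Phi_\mu(x,w)=(f_1(x),g_x(w))$, where $f_1|_{\Lambda_0}$ is identified symbolically with the full shift on $d+1$ symbols via the Markov partition. The supports of the perturbation Hamiltonians $h_1,h_2$ all lie in blocks $\widetilde{\cA}_{i,j}$ with $(i,j)\ne(0,0)$, so they vanish in a neighborhood of $\{\hat p\}\x N$; consequently that central fiber is $F_\mu$-invariant and $F_\mu^n(\hat p,q)=(\hat p,f_2^n(q))$ for every $n\in\ZZ$. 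In particular, the condition defining $\cR_\mu$ is automatically $f_2$-invariant, and it suffices to produce an open, dense, $f_2$-invariant set of $q$'s for which $W^{uu}(\hat p,q)\cap\cB\ne\vazio$ and $W^{ss}(\hat p,q)\cap\cB\ne\vazio$ both hold.

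Next I would read off the fiber IFS $\cG$. By construction, when $(x_0,x_1)$ lies in the cylinders supporting $\tilde h_3$ (indices in $\{2l+1,2l+2\}\x\{0,2l+1,2l+2\}$), the fiber map $g_x$ is $f_2$ composed with the time-$\varepsilon$ flow of $\hat h_1$ or $\hat h_2$. Since $\hat h_1,\hat h_2$ are the integrable Hamiltonians provided by Lemma \ref{lem tori}, Proposition \ref{pro ifs 3} guarantees that $\cG(\hat h_1,\hat h_2)$ has dense orbits on a set whose complement lies in a countable union of codimension-one submanifolds. The hypothesis $\Omega(\tilde f_2)=N$ combined with volume preservation makes $f_2$ recurrent, so Theorem \ref{thm rec} applies to produce an open, $f_2$-invariant set $\cR^{uu}_\mu\subset N$ whose points have dense $\cG$-orbit and whose complement is still contained in a countable union of codimension-one submanifolds (hence $\cR^{uu}_\mu$ is dense). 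Proposition \ref{pro skew 4} then identifies the projection of $W^{uu}(\hat p,q;\Phi_\mu)$ onto $N$ with $\orbit^+_\cG(q)$ up to backward $f_2$-iterates of $q$; for $q\in\cR^{uu}_\mu$ this projection is therefore dense in $N$.

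Then I would intersect with the blender. Writing $\cB$ as (a small neighborhood of) $R\x D$, where $R$ is a Markov cylinder met by $W^u(\hat p;f_1)$ (thanks to the topological mixing of $f_1$ on $\Lambda_0$) and $D$ is an open disk in $N$, the density of the $N$-projection of $W^{uu}(\hat p,q)$ forces $W^{uu}(\hat p,q)\cap\cB\ne\vazio$. A symmetric argument applied to $F_\mu^{-1}$, using the perturbation $\tilde h_4$ in place of $\tilde h_3$ and Proposition \ref{pro skew 4} for the inverse skew product, yields an open, dense, $f_2$-invariant set $\cR^{ss}_\mu$ on which $W^{ss}(\hat p,q)\cap\cB\ne\vazio$. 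Setting $\cR_\mu:=\cR^{uu}_\mu\cap\cR^{ss}_\mu$, which is again open, dense, and $f_2$-invariant, then settles the proposition for every $n\in\ZZ$ by the invariance observed in the first paragraph.

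The main obstacle will be ensuring that the ``good'' set is genuinely open and dense rather than merely residual, even after all the $f_2$-iterations are folded in. This is handled precisely as in the proofs of Proposition \ref{pro ifs 3} and Theorem \ref{thm ifs min}: the exceptional locus is controlled by a countable family of codimension-one submanifolds (whose $f_2$-iterates form another such family), hence its complement is dense; openness of $\cR^{uu}_\mu$ follows from the fact that once the $\cG$-orbit of $q$ hits $D$ through a finite word $\sigma$, the same $\sigma$ works for all $q'$ in a neighborhood of $q$, by continuity of $\phi^{x,\sigma}$. A secondary point is the precise extraction of the IFS generators from the HPS conjugation, which is straightforward because the supports of $\tilde h_3,\tilde h_4$ are disjoint from those of $\tilde h_1,\tilde h_2$, so the blender construction in Section \ref{s blender thm A} and the minimality construction here operate independently.
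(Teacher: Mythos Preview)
Your overall strategy—reduce to an IFS on the central fiber, use the locally constant skew-product structure to read $W^{uu}(\hat p,q)$ off the $\cG$-orbit of $q$, then appeal to Theorem~\ref{thm rec}—matches the paper's. But there is a real gap at the step where you pass from a single intersection to the ``for all $n\in\ZZ$'' conclusion.

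You reduce the problem to finding an \emph{open, dense, $f_2$-invariant} set $\cR^{uu}_\mu\subset N$ on which $W^{uu}(\hat p,q)\cap\cB\neq\vazio$. Openness and density you can get from the IFS transitivity (the set of $q$ whose $\cG$-orbit meets the disk $D$ is open by continuity and dense by Theorem~\ref{thm rec}). The $f_2$-invariance, however, is simply asserted and is not automatic: if $g_\sigma(q)\in D$ for some word $\sigma$, there is no a priori word $\sigma'$ with $g_{\sigma'}(f_2(q))\in D$, because $\cG$-orbit of $f_2(q)$ is a \emph{proper} subset of $\cG$-orbit of $q$ (only words with initial symbol $T_1=f_2$ appear). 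Equivalently, you are claiming that if $W^{uu}(\hat p,q)$ meets $\cB$ then so does $F_\mu(W^{uu}(\hat p,q))=W^{uu}(\hat p,f_2(q))$, and likewise for backward iterates. That is a nontrivial dynamical statement about the blender, not a consequence of IFS transitivity.

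The paper supplies exactly this missing ingredient as Proposition~\ref{pro B4} (property \textbf{B}-4): any forward or backward iterate of a $uu$-leaf meeting $\Lambda\times U$ again meets $\Lambda\times U$. This comes from the covering property built into the symplectic blender (the first part of Proposition~\ref{pro ifs 2}) and is what lets one upgrade ``$W^{uu}(F_\mu^{n_0+1}(\hat p,q))\cap\cB\neq\vazio$'' to ``for all $n$''. With \textbf{B}-4 in hand the set $\cR_\mu$ is simultaneously open (since a single intersection is an open condition) and satisfies the full quantifier. Without it, your $\cR^{uu}_\mu$ is at best a $G_\delta$ (a countable intersection $\bigcap_{n} f_2^{-n}(\{q:W^{uu}(\hat p,q)\cap\cB\neq\vazio\})$), and you have not shown it is open. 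Minor issues: you invoke Proposition~\ref{pro ifs 3}, which requires $T_1$ integrable, whereas here $f_2$ is merely recurrent—Theorem~\ref{thm rec} is the correct reference; and the use of Theorem~\ref{thm hps} is harmless but unnecessary, since $F_\mu$ is already a locally constant skew product on $\Lambda_0\times N$ by construction.
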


\Proof 
The key elements in the proof are the symbolic dynamics, the results of Section
\ref{s ifs rec} and Proposition \ref{pro B4}.

Here again we consider restriction of $f_1$ to $\Lambda_0$. For any $p=(p_{i})_{i\in\mathbb{Z}}$ $ \in \Lambda_0=\{0,1, 2,$  $\dots, d\}^{\mathbb{Z}}$, the local and global unstable manifolds of $p$ for $f_1$ are
$$W^{u}_{loc}(p ~;~ f_1|_{\Lambda_0})=\{(x_{i})\mid \forall n\leq 0, x_{i}=p_{i} \},$$
$$W^{u}(p~;~f_1|_{\Lambda_0})=\{(x_{i})\mid \exists n_{0}\in\mathbb{Z}, \forall n\leq n_{0}, x_{i}=p_{i} \}.$$
So, the local and global strong unstable manifolds of $(p,q)$ for $F_{\mu}$ are
$$W^{uu}_{loc}(p,q;F_{\mu}|_\Gamma)=W^{u}_{loc}(p;f_1|_{\Lambda_0})\times \{q\}=\{(x_{i})\mid \forall n\leq 0, x_{i}=p_{i} \}\times \{q\},$$
$$W^{uu}(p,q;F_{\mu}|_\Gamma)=\bigcup_{n\geq0} F^{n}_{\mu}(W^{uu}_{loc}(F^{-n}_{\mu}(p,q);F_{\mu}|_\Gamma)).$$

Let $T_1=f_2$, $T_2=\phi_1\circ f_2 $ and $T_3=\phi_2\circ f_2$, where $\phi_1$ and $\phi_2$ are the time $t=\veps(\mu)$ maps of the Hamiltonian flows of  $\epsilon\hat{h}_{1}$ and $\epsilon\hat{h}_{2}$, respectively.
Then the proof of Theorem \ref{thm rec} yields the transitivity of the IFS $\cG(T_{1}, T_{2}, T_{3})$.

Let $q\in {\rm Rec}(f_2) \subset N$ such that there is a finite sequence $(\sigma_{i})_{i=1}^{n}$  ($\sigma_i \in \{1,2,3\}$) and 
$$T_{\sigma_n}\circ T_{\sigma_{n-1}} \circ \cdots \circ T_{\sigma_1} (q) \in T_1^{-2}(D). $$
We denote the set of all such points by $R_\mu$. 

Now, we consider
$$x=(x_{i})=(\overbrace{\dots, 0 ,0,0 }^{W_{loc}^u(\hat{p})} ~; ~ 
\overbrace{a_{1} , a_{2}, \dots, a_{n_0}}^{IFS}, 0, 0,\overbrace{x_{n_0 + 3},\dots}^{arbitrary}),$$
where for $i=1, 2, \dots, n_0$,
\[ a_i =
\begin{cases} 
0,  &\text{if  $\sigma_i = 1$;}\\ 
2l+1, &\text{if $\sigma_i=2$;}\\
 
2l+2, &\text{if $\sigma_i=3$.} \\
\end{cases} \]

The elements of the sequence $(x_{i})$ are chosen in such a way  that $f_{1}^{n_{0}+1}(x)\in \cA_{0,0}$, $x \in W^u_{loc}(\hat{p}, f_1|_{\Lambda_0})$, and so $(x,q)\in W^{uu}(\hat{p},q;F_{\mu}|_\Gamma) $. Moreover, the choice of $a_{1} , a_{2}, \dots, a_{n_0}$ permits us to realize the IFS $\cG(T_{1}, T_{2}, T_{3})$.

We now take the iterations of the point $(x,q)$ under $F_\mu$. 
Since $F_\mu $ restricted to $\cA_{a_i , \cJ_1}\x N$ ($\cJ_1=\{0, 2l+1, 2l+2\}$) is equal to $f_1 \x T_{\sigma_i} $, inductively we have:
$$(f_1^i(x), T_{\sigma_i}\circ T_{\sigma_{i-1}} \circ \cdots \circ T_{\sigma_1} (q)) 
= F^i_\mu(x,q)
\in W^{uu}(F^i_\mu(\hat{p},q); F_\mu ). $$
In particular, for $i=n_0+1$,  $F^{n_0+1}_\mu(x,q) \in \cB$. So,
$$W^{uu}(F^{n_0+1}_\mu(\hat{p},q); F_\mu ) \cap \cB \neq \vazio. $$
 Now we apply  Proposition \ref{pro B4} which implies that for all $n\in \ZZ$,
$$ W^{uu}(F^{n}_\mu(\hat{p},q); F_\mu ) \cap \cB \neq \vazio .    $$
Let $\cR_\mu$ be the set all points $q \in N $ such that the above intersection holds. Observe that we just proved that $R_\mu \subset \cR_\mu$.
The set $\cR_\mu$ is open, because $\cB$ is open and the strong stable and unstable manifolds depend continuously on the points. 
This completes the proof.
\Endproof

\begin{remark}\label{rmk Leb}
Observe that the set  $R_\mu$
has total Lebesgue measure. This follows from the results proved in Section \ref{s ifs rec}.
\end{remark}

\begin{corollary}\label{cor proof minimal} Let $\mu>0$. Then there is an open and dense set $\hat\cR_\mu \subset \Lambda\x N$  such that for every $x \in \hat\cR_\mu $
and for any $n\in \ZZ$ it follows that 
$W^{uu}(F^n_{\mu}(x)) \cap \cB \neq \varnothing$ and $W^{ss}(F^n_{\mu}(x)) \cap \cB \neq \varnothing$.
\end{corollary}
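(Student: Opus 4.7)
The plan is to realize $\hat\cR_\mu$ as the intersection of two open dense sets. Set
\[
A^u := \{x \in \La \x N : W^{uu}(x) \cap \cB \neq \varnothing\},
\qquad
A^s := \{x \in \La \x N : W^{ss}(x) \cap \cB \neq \varnothing\},
\]
and put $\hat\cR_\mu := A^u \cap A^s$. Property \textbf{B}-4 of Proposition \ref{pro B4} asserts that every forward and backward iterate of a $uu$-leaf (resp.\ $ss$-leaf) meeting $\La\x U$ again meets $\La\x U$ in a $uu$-segment (resp.\ $ss$-segment), and $\La\x U\subset \cB$ by construction; thus $A^u$ and $A^s$ are fully $F_\mu$-invariant, and once they are shown to be open and dense, the conclusion for every $n\in\ZZ$ follows automatically.

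Openness of $A^u$ (and symmetrically of $A^s$) is a continuity argument: if $x_0\in A^u$ and $z\in W^{uu}(x_0)\cap\cB$, then $z$ already lies in a local disk $W^{uu}_L(x_0)$ for some finite $L$; since the partially hyperbolic lamination of Theorem \ref{thm hps} provides a $C^0$-continuous family $x\mapsto W^{uu}_L(x)$, for $x$ near $x_0$ the disk $W^{uu}_L(x)$ still meets the open set $\cB$, so $x\in A^u$.

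The core step is density of $A^u$. Proposition \ref{proof minimal} already gives $\{\hat p\}\x\cR_\mu\subset A^u$ with $\cR_\mu$ open dense in $N$. Given an arbitrary open $\cO\subset \La\x N$, choose open sets $U\subset \La$ and $V\subset N$ with $U\x V\subset \cO$. Topological mixing of $\La$ makes $W^u(\hat p;f_1|_\La)$ dense in $\La$, so fix $p'\in U\cap W^u(\hat p;f_1|_\La)$. Under the skew-product conjugacy of Theorem \ref{thm hps}, the strong-unstable holonomy $H_{p'}\colon \{\hat p\}\x N \to \{p'\}\x N$ along the $\cF^{uu}$-leaves is a well-defined homeomorphism, so $H_{p'}(\{\hat p\}\x\cR_\mu)$ is open and dense in $\{p'\}\x N$ and hence meets $\{p'\}\x V$ at some $(p',q')$. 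By construction $W^{uu}((p',q')) = W^{uu}((\hat p,q))$ for some $q\in\cR_\mu$, so this common leaf meets $\cB$ and $(p',q')\in A^u\cap\cO$. The dual argument using the dense set $W^s(\hat p;f_1|_\La)$ yields density of $A^s$, and $\hat\cR_\mu$ is then open and dense in the Baire space $\La\x N$. The delicate point is establishing that $H_{p'}$ is a homeomorphism which transports open dense sets to open dense sets; I would verify this from the $C^0$-continuous skew-product model of Theorem \ref{thm hps} together with the uniform fiber contraction guaranteed by the domination hypothesis.
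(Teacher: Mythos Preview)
Your argument is correct and follows essentially the same route as the paper's proof. The paper phrases the density step slightly differently---it says that $W^{uu}(\{\hat p\}\times N;F_\mu)$ is dense in $\Lambda\times N$ (by minimality of the $u$-foliation on $\Lambda$) and that, by continuity of the $uu$-foliation, the open dense subfamily of leaves through $\{\hat p\}\times\cR_\mu$ is still dense---whereas you make the same content explicit via the holonomy $H_{p'}$. Your worry about $H_{p'}$ is somewhat overcautious: for $F_\mu$ itself no appeal to Theorem~\ref{thm hps} is needed, since $F_\mu$ is by construction a locally constant skew product over $f_1|_\Lambda$, so each global $uu$-leaf projects diffeomorphically onto $W^u(\hat p;f_1|_\Lambda)$ and the holonomy between center fibers is a genuine homeomorphism (its inverse being the reverse holonomy). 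One small point: your containment ``$\Lambda\times U\subset\cB$'' is not literally how the paper sets things up, but the invariance you need is exactly what Proposition~\ref{pro B4} provides, and the paper uses it in the same way.
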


\Proof 
First observe that Theorem \ref{thm hps} and the minimality of the stable and unstable foliations in the hyperbolic set $\Lambda$ implies that 
$W^{ss}(\{\hat{p}\}\x N ;F_\mu)$  and $W^{uu}(\{\hat{p}\}\x N ;F_\mu)$ are dense in $\Lambda \x N$.
On the other hand, the strong unstable foliations (with leaves $W^{uu}(\hat{p},q ;F_\mu)$) inside $W^{uu}(\{\hat{p}\}\x N ;F_\mu)$  
is continuous. 
Moreover, it follows from Proposition \ref{proof minimal} that a dense open subset of $uu$-leaves in $W^{uu}(\{\hat{p}\}\x N ;F_\mu)$ intersects $\cB$.
The same statements hold for the strong stable foliation.

Now, the density of $W^{uu}(\{\hat{p}\}\x N ;F_\mu)$ and $W^{ss}(\{\hat{p}\}\x N ;F_\mu)$ in $\Lambda \x N$ and Proposition \ref{pro B4} imply the corollary.
\Endproof

Observe that Corollary \ref{cor proof minimal} and Lemma \ref{lem blender ph 1} imply that $H(P;F_\mu)\bigcap_{n\in\ZZ} F_\mu^n(V\x N) = \Lambda \x N$, for any $\mu>0$.  So it remains to show that the set $H(P;F_\mu) \bigcap_{n\in\ZZ} F_\mu^n(V\x N)$ is robustly topologically mixing.


\subsubsection{Robustness of $H(P;F_\mu)\bigcap_{n\in\ZZ} F_\mu^n(V\x N)$}\label{s robust thm A}

By the definition, we have to construct an increasing sequence of compact invariant robust transitive sets, such that its union gives $\Lambda\x N.$
For any positive integer $L$ large enough, we consider the set $\hat\cR_{\mu,L}$ given by the points in $\hat\cR$ such that   for any iterate, it holds that the strong stable and strong unstable manifolds of diameter $L$ are connected to the blender.  Let us denote this set by $\Gamma_{F_\mu,L}$. It follows immediately  that the set is compact invariant and is contained in the homoclinic class of $p$. By the definition it also holds that $\bigcup_{L>0} \hat\cR_{\mu,L}=\hat\cR.$ In the same way, and following the details of the proof of Lemma \ref{lem blender ph 2} it holds that $\bigcup_{L>0} \Gamma_{F_\mu,L}=H(P;F_\mu)\bigcap_{n\in\ZZ} F_\mu^n(V\x N)$. To show that $\Gamma_{F_\mu,L}$ is robustly transitive for any $L$, observe that for any $G$ nearby to $F_\mu$ it holds that $\Gamma_{G,L}$, defined as the set of trajectories  with strong stable and strong unstable  of diameter $L$  connected to the continuation of the blender for $G$, is in fact a continuation of $\Gamma_{F_\mu,L}.$ 
 
 
\subsubsection{Continuation of compact parts}
 
Following the argument described in the previous subsection, to conclude the last item of Theorem \ref{thm A'} it is enough to show that for any compact subset  $\cK \subset \cR_\mu$, there exists $L>0$  such that any point in  $\cK$ is connected to the blender through the strong (stable and unstable) manifolds with diameter less than $L$. 
In fact, the invariant  compact parts are given by $\Gamma_{\mu,L}\bigcap_{n\in \ZZ}F^n_\mu(V\x N).$ 

 The proof of Theorem \ref{thm A'} is completed. 
\Endproof


\section{Proof of main theorems} \label{sec instability}


\subsection{Proof of Theorem \ref{thm A}}\label{s end proof A}
Here we complete the proof of Theorem \ref{thm A}.

\Proofof{Theorem \ref{thm A}} 
Observe that if $F= f_1 \x f_2$ satisfies that the hypothesis of   Theorem \ref{thm A} then it satisfies the hypothesis of   Theorem \ref{thm A'}. So, from Theorem \ref{thm A'} there exists a family $F_\mu, \mu\geq 0$  such that $F_0=f_1\x f_2$. Moreover, for any $\mu>0$, there exists a neighborhood $\cU_\mu$ of $F_\mu$ such that any $G\in \cU_\mu$ satisfies the (1)-(3) of Theorem \ref{thm A'} and so (2) of Theorem \ref{thm A}. Let $\cU=\bigcup_{\mu>0} \cU_\mu$. Then $\cU$ verifies (1) and any $G\in\cU$ verifies (2). The proof of Theorem \ref{thm A} is completed.
\Endproof


\subsection{Theorem \ref{thm B}: Robust transitivity in nearly integrable diffeomorphisms} 
The proof is similar to the proof of Theorem \ref{thm A}. However, using strongly the fact that the map $f_2$ is integrable, it is obtained that {\it no exceptional set appears.}  More precisely, the stronger conclusion follows from applying Theorem \ref{thm ifs min} instead of Theorem \ref{thm rec}.

\Proofof{Theorem \ref{thm B}} 
By the assumption, $f_1$ satisfies the {\textsf{H.S.}} hypothesis. So there exists a neighborhood $V\subset M$ such that its maximal invariant set $\Lambda$ is a transitive hyperbolic basic set.
Let $\tilde{f}_2$ be an integrable diffeomorphism sufficiently close to $f_2$  with a 
$\delta$-weak hyperbolic saddle point (with small $\delta>0$).
Then, the diffeomorphisms $f_1, \tilde{f}_2$ satisfy the hypothesis of Theorem \ref{thm A'}. Here observe that  we are  assuming that $\tilde{f}_2$ is integrable. 

So, in order to prove Theorem \ref{thm B} we adapt the proof of Theorem \ref{thm A'} (and  Theorem \ref{thm A}). In fact, we do repeat the proof of  Theorem \ref{thm A'} word by word, except in a few points to get a slightly stronger conclusion.
In the following we point out where the proof of Theorem \ref{thm A'} needs changes. Beyond that part,  the rest of the proof of Theorem \ref{thm A'} is repeated without any change.
 
First, we choose an iterate $k$ large enough such that ${f_1^k}{|\La}$ is conjugate to a shift with $d$ symbols and  $d\geq 2l + 2\dim(N)+ 2$.

Second, in the definition of perturbations (c.f Section \ref{s pert thm A}), we modify the second group of perturbations in order to apply the  Theorem \ref{thm ifs min}  instead of Theorem \ref{thm rec} in the proof of Proposition \ref{proof minimal}.

Since  $T_0:=\tilde{f}_2$ is integrable, it follows from Theorem \ref{thm ifs min} that there are $T_1, \dots, T_m$, $m:=\dim(N)+1$, arbitrarily close to $\tilde{f}_2$ such that the IFS $\cG(T_0, T_1, \dots, T_m)$ is minimal. 
Similarly, we define $\hat{T}_1, \dots, \hat{T}_m$ such that the IFS $\cG(T^{-1}_0, \hat{T}^{-1}_1, \dots, \hat{T}^{-1}_m)$ is minimal.

We set $\cJ_1=\{0, 2l+1, 2l+2, \dots,  2l+m\}$ and 
$\cJ_2=\{0, 2l+m+1, \dots, 2l+2m\}.$  

Then, we define the perturbation  performed in the set $\cA_{\cJ_1,\cJ_1}\cup\cA_{\cJ_2,\cJ_2}$ such that 
$F_\mu$ restricted on $\cA_{2l+j, \cJ_1} \x N$ is  equal to ${f}_1\x T_{j}$, for any $j\in \{1, \dots, m\}$; and
$F_\mu$ restricted on $\cA_{2l+m+j, \cJ_1} \x N$ is  equal to ${f}_1\x \hat{T}_{j}$, for any $j\in \{1, \dots, m\}$.

Now, by adapting the proof of Proposition \ref{proof minimal} we apply the minimality of the IFS $\cG(T_0, T_1, \dots, T_m)$ to prove the slightly stronger conclusion that $\cR_\mu=N$.  Consequently, in Corollary \ref{cor proof minimal} we obtain $\hat\cR_\mu = \Lambda\x N$ which is a compact set. On the other hand,  the sets $\hat\cR_{\mu,L}$ are open in $\hat\cR_\mu$. From $\bigcup_{L>0} \hat\cR_{\mu,L}=\hat\cR$ and compactness, it follows that for some large $L_0>0$, $\hat\cR_{\mu,L_0}=\hat\cR_\mu=\Lambda\x N$. Therefore, $\Gamma_{F_\mu,L_0}=H(P;F_\mu)\bigcap_{n\in\ZZ} F_\mu^n(V\x N)$.
Thus, for any $G$ close to $F_{\mu}$, the strong continuation $\Gamma_G=\Gamma_{G, L_{0}}$ is well defined and topologically mixing (from Theorem \ref{thm A'}), and it is homeomorphic to $\Gamma$.

In other word, we obtain the family of diffeomorphisms $F_\mu$  that $F_0={f}_1\x \tilde{f}_2$, and  for $\mu>0 $ the set $\Gamma=\Lambda \x N$ is robustly topologically mixing. This implies that there is a neighborhood $\cU_{\tilde{f}_2} $ of the arc  $\{F_\mu | \mu>0\}$ such that for any  $G\in \cU_{\tilde{f}_2}$ the (strong) continuation $\Gamma_G$ of $\Gamma$ is robustly topologically mixing and is a relative homoclinic class. It is clear that the projection of $\Gamma_G$ to $N$ is onto (it is  homeomorphic to  $\Lambda \x N$).  
Now let $$\cU= \bigcup_{{\tilde{f}_2} } \cU_{\tilde{f}_2},$$
where the union is taken over all integrable diffeomorphism $\tilde{f}_2$ sufficiently close to $f_{2}$ and with a saddle periodic point.

It is clear  from definition that $f_1\x f_2$ is in the closure of $\cU$, and for any $F\in \cU$, there exists a robustly topologically mixing set  homeomorphic to the set $\Lambda \x N$. So its projection to $N$ is surjective.  This completes the proof.
\Endproof


\subsection{Theorem \ref{thm C}: Instabilities in nearly integrable Hamiltonians.}
To prove Theorem \ref{thm C}, it is used that the time one maps of $H_0$ satisfies the hypotheses of Theorem \ref{thm A}. 

\Proofof{Theorem \ref{thm C}} Let $M=\DD^{m}\x\TT^{m}$  and $N=\DD^n\x\TT^{n}$. 

First we perturb the Hamiltonian $h_{1}$ on $M$ to obtain a transversal homoclinic intersection. Since $h_{1}$ satisfies the  {\textsf{A.H.}} condition, then there  is a small perturbation $\tih_{1}$ of $h_{1}$ such that the time one map $\tilde{f}_1$ of its Hamiltonian flow has a nontrivial transitive hyperbolic invariant set. 

Note that the time one map of the Hamiltonian flow of $h_{2}$ is dominated by the restriction of $\tilde{f}_1$ to its hyperbolic basic set.  

Then, we take another small integrable perturbation $\tih_{2}$ of the integrable Hamiltonian $h_{2}$ on $N$  to create a $\delta$-weak hyperbolic periodic point, for $\delta>0$ small enough. 

Note that since the perturbations done in the proof of Theorems \ref{thm A'}, \ref{thm A} and \ref{thm A} are performed by Hamiltonian diffeomorphism and by symplectic changes of coordinates, those theorems and proofs can easily be adapted for Hamiltonians. 

Now we repeat the proof of Theorem \ref{thm A} with the same changes we made in the proof of Theorem \ref{thm B}. Note that here the manifolds $N$ is not compact so we obtain robust transitivity on (strong) continuation of any compact subset of $\Gamma$. This completes the proof of Theorem \ref{thm C}.
\Endproof


\subsection{Theorem \ref{thm D}: Slow-fast systems}

Before giving the proof of  Theorem \ref{thm D}, let us recall a result due to Zehnder and Newhouse. 


\subsubsection{Theorem of Zehnder and Newhouse} \label{s thm ZN}  
Recall that a periodic point $p$ of $f$ of period $n$ is called \textit{quasi-elliptic} 
if $T_pf^n$ has a non-real eigenvalue of norm one, and all eigenvalues of norm one are non-real. Indeed, $C^{r}$ generically every periodic point is either hyperbolic or quasi-elliptic (cf. \cite{rob}, \cite{ne}). Note also that if $f$ is Anosov, then robustly there is no quasi-elliptic periodic point.

\begin{theorem}[{\cite{z, ne}}] \label{thm ZN}  
There is a residual set \biR$~\subset \diff^r_\w(M)$, $1 \leq r \leq \infty$, such that if $f \in$ \biR, then any quasi-elliptic periodic point of $f$ is a limit of transversal homoclinic points of $f$.
\end{theorem}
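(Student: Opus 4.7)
\medbreak\noindent\textit{Proposal of proof.}
The plan is to realize $\biR$ as a countable intersection of $C^{r}$ open and dense subsets of $\diff^{r}_{\w}(M)$. For each triple $(n,U,\veps)$, with $n\in\NN$, $U$ a member of a countable basis of precompact open sets of $M$, and $\veps>0$ rational, let
\[
\cR_{n,U,\veps}:=\Bigl\{f\in\diff^{r}_{\w}(M):
\begin{array}{l}\text{every quasi-elliptic periodic point of }f\text{ of period}\\
\leq n\text{ in }U\text{ has a transversal homoclinic point of }f\\
\text{at distance}<\veps.\end{array}\Bigr\}
\]
Then $\biR:=\bigcap_{n,U,\veps}\cR_{n,U,\veps}$ is the desired residual set, provided each $\cR_{n,U,\veps}$ is open and dense.

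For openness, I would argue by contradiction: if $f_{k}\to f$ in $\cD^{r}_{\w}$ with $f_{k}\notin\cR_{n,U,\veps}$, then each $f_{k}$ admits a quasi-elliptic periodic point $p_{k}\in U$ of period $\leq n$ with no transversal homoclinic point at distance $<\veps$. Precompactness of $U$ and the uniform bound on periods allow extracting a subsequence $p_{k}\to p$, with $p$ a periodic point of $f$ of period $\leq n$ in $\overline{U}$. Continuity of the spectrum of the Poincar\'e map shows that $p$ is itself quasi-elliptic (or elliptic, which we incorporate by convention into the hypothesis; if $p$ lies on $\partial U$ one shrinks $\veps$ by a rational discretization of $U$). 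If $f\in\cR_{n,U,\veps}$ and $p$ is a simple quasi-elliptic point, the continuation $p_{f_{k}}$ is well-defined, the center-stable and center-unstable splittings vary continuously, and a fixed transversal homoclinic point of $p$ persists as a transversal homoclinic point of $p_{k}$ at distance $<\veps$ for $k$ large, contradicting the choice of $f_{k}$; a refinement using a countable family of residual conditions that rule out non-simple (eigenvalue-multiplicity or Jordan-block) quasi-elliptic points handles the degenerate case.

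The density step is the heart of the matter and the main obstacle. Fix $f\in\diff^{r}_{\w}(M)$ and a quasi-elliptic periodic point $p$ of period $m\leq n$ in $U$; I must produce $\tilde f$ arbitrarily $C^{r}$-close to $f$ possessing a transversal homoclinic point of (the continuation of) $p$ at distance $<\veps$. By a first arbitrarily small symplectic perturbation supported in a tiny neighborhood of the orbit of $p$, one puts $f^{m}$ into a Birkhoff normal form to sufficiently high order in symplectic coordinates adapted to the splitting $T_{p}M=E^{s}\oplus E^{u}\oplus E^{c}$, with the center part a formal integrable rotation. A second generic, arbitrarily small, polynomial symplectic perturbation of the normal form ensures Birkhoff nondegeneracy (nonvanishing first twist coefficient on $E^{c}$) and a standard Diophantine nonresonance condition on the linear rotation numbers. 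Under these conditions, Moser's twist theorem for symplectic maps yields a positive-measure family of KAM invariant tori accumulating at $p$ inside the (possibly non-smooth but formally invariant) pseudo-center manifold; between any two such neighboring tori, a Poincar\'e--Birkhoff-type argument in the action-angle variables produces hyperbolic periodic orbits of arbitrarily high period accumulating at $p$.

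It then remains to produce, for at least one such nearby hyperbolic periodic orbit $q$, a transversal homoclinic point at distance $<\veps$ from $p$. In the integrable normal form the branches of $W^{s}(q)$ and $W^{u}(q)$ coincide along a separatrix of the twist map. The discrepancy between $W^{s}(q)$ and $W^{u}(q)$ for the actual map $f$ is measured by a Melnikov-type integral whose vanishing is a codimension-one condition in the space of symplectic jets; a final arbitrarily small Hamiltonian perturbation, supported in a small annular region of the orbit of $q$ but disjoint from a neighborhood of $q$ itself (so that $q$ is not destroyed), is enough to make this Melnikov integral nonzero. Hence the separatrix splits and a transversal intersection point of $W^{s}(q)\pitchfork W^{u}(q)$ appears at distance $<\veps$ from $p$; by the $\la$-lemma this intersection accumulates on every homoclinic loop, but more importantly, since $q$ itself is at distance $<\veps/2$ from $p$ (taking the period high enough), we obtain $\tilde f\in\cR_{n,U,\veps}$. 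Note that, as written, the theorem is stated for quasi-elliptic periodic points of $f$; a minor modification of the above argument, selecting $q$ to be quasi-elliptic of a prescribed period would equally work, but the hyperbolic case above is what is strictly needed here, because the goal is merely to exhibit a transversal homoclinic point near $p$, not near a hyperbolic point. The main technical obstacle is controlling the Melnikov splitting in the $C^{r}$ category with $r<\infty$ and with the perturbative parameter being the distance to the fully integrable Birkhoff model (an exponentially small splitting a priori); this is the step where the cited works \cite{z,ne} do the most delicate analysis, which I would invoke as a black box rather than reprove.
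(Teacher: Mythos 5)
First, a point of context: the paper does not prove this statement — it is quoted as a known result of Zehnder and Newhouse (the bracketed citation \cite{z,ne} in the theorem header), and the paper treats it as a black box in the proofs of Theorem \ref{thm D} and Remark \ref{rem A.H.}. So there is no in-paper proof to compare against; what follows is an assessment of your proposal on its own terms.

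The density step is in the right spirit — Zehnder's original argument does proceed via Birkhoff normal form near the (quasi-)elliptic point, a twist/KAM structure on the centre directions, hyperbolic orbits between invariant circles, and a generic splitting of separatrices — so invoking \cite{z,ne} for the quantitative splitting estimate is reasonable. The genuine gap is in the openness step. The set $\cR_{n,U,\veps}$ as you define it is not open, and the contradiction argument does not repair this. Two things can go wrong. First, the limit $p$ of the quasi-elliptic points $p_k$ need not be quasi-elliptic or elliptic for $f$: the non-real unit eigenvalues of $D_{p_k}f_k^{m_k}$ can collide on the real axis and converge to $\pm 1$, producing a degenerate (parabolic) periodic point of $f$ for which your definition says nothing, so you cannot conclude $f\notin\cR_{n,U,\veps}$. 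Second, and symmetrically, a map $f\in\cR_{n,U,\veps}$ (e.g.\ one with no quasi-elliptic points of period $\le n$ in $U$ at all, so the condition holds vacuously) can be perturbed so that a new quasi-elliptic orbit of period $\le n$ is born by bifurcation with no homoclinic point nearby; thus $\cR_{n,U,\veps}$ is not a neighbourhood of $f$. Your parenthetical remark about ``a countable family of residual conditions that rule out non-simple\ldots points'' gestures at the fix but is not built into the scheme. The standard repair — and what makes Newhouse's Baire argument close — is to first restrict to the Kupka–Smale-type set $\cK_n$ of symplectic diffeomorphisms whose periodic points of period $\le n$ are all nondegenerate (hence each is hyperbolic or quasi-elliptic, persistent, and finite in number on compacta); $\cK_n$ is open and dense for each $n$, the map $f\mapsto\{\text{period-}\!\le n\text{ quasi-elliptic points in }\overline U\}$ is continuous on $\cK_n$, and one should define $\cR_{n,U,\veps}$ only inside $\cK_n$. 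With that amendment the compactness/contradiction argument you sketch does yield openness, and the overall Baire strategy is sound.
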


Moreover, $C^r$ generically, any elliptic point is the limit of 
a sequence of hyperbolic orbits $\{p_n\}$ such that $p_n$ is $\delta_n$-weak hyperbolic point where $\delta_n \to 0$ as $n\to \infty$ (cf. \cite{ne} and \cite{bk}).
Thus, $C^r$ generically, existence of an elliptic periodic point implies the existence of a $\delta$-weak hyperbolic periodic points with arbitrarily small $\delta>0$.

\Proofof{Theorem \ref{thm D}}
In order to prove Theorem \ref{thm D}, it is enough to show that for a sufficiently small $\veps>0$, $H_\veps$ is approximated by a sum of two Hamiltonians whose time one maps satisfy the hypotheses of Theorem \ref{thm A}.

By assumption, $h_1$ satisfies the {\textsf{H.S.}} condition. So, the time one map $f_1$ of  its Hamiltonian flow has a hyperbolic basic set $\Lambda$.
Then, there exists $\veps_0>0$ such that for any $\veps\in(0, \veps_0)$,  the time one map of  the Hamiltonian flow of $\veps h_2$, which we denote it by $f_2$, is dominated by $f_1|_\Lambda$.
Since, the domination is an open property, the same holds for any small perturbation of $h_2$. 

By assumption, the time one map of $h_2$ has an elliptic periodic point, and so does time one map of any small perturbation of $h_2$. On the other hand,
it follows from Section \ref{s thm ZN}  that the time one map of the Hamiltonian flow of any generic perturbation $\tilde{h}_2$ of $h_2$, has an elliptic periodic point which is the limit of a sequence of $\delta$-weak hyperbolic periodic points for arbitrarily small   $\delta>0$.

Now, we choose $\veps=1/k$, where $k\in \NN\cap  (1/\veps_0, \infty)$. Then, $\tilde{f}_2$, the time one map of the Hamiltonian flow of $\veps\tilde{h}_2$,  has an elliptic periodic point which is the limit of a sequence of $\delta$-weak hyperbolic periodic points, for arbitrarily small   $\delta>0$. 

Therefore, $f_1$ and $\tilde{f}_2$ satisfy the hypothesis of Theorem \ref{thm A'}. Thus, there exists a smooth arc of Hamiltonians beginning on $h_1+\veps\tilde{h}_2$ such that the corresponding time one maps verify the conclusion of  Theorem \ref{thm A'}.

Let $\cH$ be the union of all such smooth arcs as $\tilde{h}_2$ tends to $h_2$. Clearly, $H_\veps$ belongs to the $C^\infty$ closure of $\cH$. 

To finish the proof, we will show that for any $\alpha>0$, there is a  
open neighborhood $\cV$ of $\cH$ that verifies (1) and (2) in thesis of the theorem.

Let $H\in\cH$, then its corresponding time one map verifies all the properties of some $F_\mu$ in the proof of  Theorem \ref{thm A'}.
Therefore, it is enough to show that the continuation of the set $\Gamma$ (in the proof of Theorem \ref{thm A'}) satisfies (1), whenever the neighborhood $\cV$ is sufficiently small.  This is a direct consequence of the following:

Let $\cR_{\mu,L}$ be  the set of points in $\cR_\mu$ such that their strong stable and unstable manifolds of diameter $L$ are connected to the blender. Then,  the same holds for the continuation of this set for any nearby system.

Using the remark \ref{rmk Leb}, the set $\cR_\mu = \bigcup_{L>0} \cR_{\mu,L} = \bigcup_{L>0} \Gamma_{F_\mu, L}\cap (\{\hat p\}\x N)$ is not open and dense  but has total Lebesgue measure in 
$\{\hat p\}\x N$. So, for $L>0$ large enough,  $ {\rm vol}(\cR_{\mu,L})> {\rm vol}(N)-\alpha/3$.

In addition, the normally hyperbolic manifold $\{\hat p\}\x N$ and its continuations are $C^r$ close. Consequently, the volume of continuation of $\cR_{\mu,L}$  is close to $ {\rm vol}(\cR_{\mu,L})$. So its projection to $N$ has volume 
$> {\rm vol}(N)-\alpha$. 

Now, let $\Upsilon$ be the continuation of $\Gamma_{F_\mu, L}$. So it is robustly topologically mixing and  contains the continuation of $\cR_{\mu,L}\subset \Gamma_{F_\mu, L}$.
Therefore, the projection of $\Upsilon$  on  $N$ has volume $> {\rm vol}(N)-\alpha$.
This completes the proof of Theorem \ref{thm D}.
\Endproof


\subsection{Corollary \ref{cor ergodic}: Existence of ergodic measures}
\Proofof{Corollary \ref{cor ergodic}}
In the proof of Theorem  \ref{thm A'} we prove that the large transitive set $\Gamma_G$ is in fact contained in the homoclinic class of some hyperbolic periodic point.
In \cite[Theorem 3.1]{abc} it is proved that any homoclinic class coincides with  the support of some ergodic measure (with zero entropy). 
Consequently, the set $\Gamma_G$ is contained  in the support of some ergodic measure.
\Endproof

It seems interesting that the support of ergodic measures varies (lower-semi) continuously in Hausdorff topology as the diffeomorphisms vary in $C^r$ topology. 
This phenomenon suggest a notion of ``stable ergodicity'', weaker that the usual  one (see also Section \ref{s ergodic}).

\subsection{Dichotomy: wandering and instability versus recurrence and transitivity.} \label{sec inst vs. rec}
In this section we are discussing the following dichotomy:
{\it Suppose that the assumption (b) in Theorem \ref{thm A'} fails.
Then, either there exist  large robustly transitive sets as in Theorem \ref{thm A'} or there exist  wandering orbits converging to infinity.}\\

Before proving it,  we need to state some lemmas.

\begin{lemma} \label{lem rec}
There is a residual subset $\cR$ of $~\inter(\Omega(f))$ such that any point in $\cR$ is a (positively and negatively) recurrent point.
\end{lemma}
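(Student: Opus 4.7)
The plan is to prove this as the standard topological Poincar\'e recurrence statement adapted to the non-wandering setting. First, I would rewrite positive recurrence as a countable intersection of open conditions: fixing a compatible metric $d$ on $M$, set
$$A_{m,N} := \bigcup_{k \geq N}\{x \in M : d(x, f^k(x)) < 1/m\},$$
which is open by continuity of each $x \mapsto d(x, f^k(x))$. A point $x$ is positively recurrent if and only if $x \in \bigcap_{m, N \geq 1} A_{m,N}$, so the positively recurrent set ${\rm Rec}^+(f)$ is a $G_\delta$ in $M$; applying the same argument to $f^{-1}$ makes ${\rm Rec}^-(f)$ a $G_\delta$ as well. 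Since $\inter(\Omega(f))$ is open in the manifold $M$ it inherits a Baire space structure, so the lemma will follow once every $A_{m,N}$ is shown to be dense in $\inter(\Omega(f))$.

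For density, given a non-empty open $V \subset \inter(\Omega(f))$ I would shrink to an open $W \subset V$ with $\diam(W) < 1/m$ and look for $k \geq N$ with $f^k(W) \cap W \neq \varnothing$; any $x$ in the non-empty set $W \cap f^{-k}(W)$ then lies in $V \cap A_{m,N}$, since both $x$ and $f^k(x)$ belong to $W$. The crucial observation is therefore that the return-time set $K(W) := \{k \geq 1 : f^k(W) \cap W \neq \varnothing\}$ is unbounded whenever $W$ is a non-empty open subset of $\Omega(f)$. I would prove this by contradiction: if $K(W) \subset [1, K]$, pick any $x \in W$ and a shrinking nested sequence $W_n \subset W$ of open neighborhoods of $x$ with $\diam(W_n) \to 0$. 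Since every point of $W_n$ is non-wandering, $K(W_n)$ is non-empty and contained in $K(W) \subset [1, K]$, so pigeonhole yields a single $k \leq K$ and points $y_n \in W_n \cap f^{-k}(W_n)$ for infinitely many $n$; then $y_n \to x$ and $f^k(y_n) \to x$, and continuity of $f$ forces $f^k(x) = x$. Hence every point of $W$ would be periodic of period at most $K$, but iterating the period produces arbitrarily large elements of $K(W)$, contradicting $K(W) \subset [1,K]$.

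Once density is in hand, the Baire category theorem guarantees that both ${\rm Rec}^+(f) \cap \inter(\Omega(f))$ and ${\rm Rec}^-(f) \cap \inter(\Omega(f))$ are residual in $\inter(\Omega(f))$, and their intersection supplies the required residual set $\cR$. I expect the delicate step to be the passage from the pointwise non-wandering condition to the set-level unboundedness of return times of $W$; modulo that dichotomy argument, the rest is essentially routine Baire category bookkeeping.
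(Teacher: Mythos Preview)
Your proof is correct. Both you and the paper argue via the Baire category theorem in $\inter(\Omega(f))$, but the $G_\delta$ is organized differently: the paper fixes a countable base $\{U_i\}$ with $\diam(U_i)\to 0$, picks for each $i$ a \emph{single} return time $n_i$ with $f^{n_i}(U_i)\cap U_i\neq\varnothing$, sets $V_i=U_i\cap f^{-n_i}(U_i)$, and takes $\cR^+=\bigcap_k\bigcup_{i\geq k}V_i$. This sidesteps your unbounded-return-times sublemma entirely---one return per basic set suffices, because if the return times $n_i$ witnessed at some $x\in\cR^+$ happen to stay bounded, a pigeonhole argument forces $x$ to be periodic and hence recurrent anyway (a step the paper leaves implicit). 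Your route trades that hidden case split for the explicit and independently useful fact that $K(W)$ is unbounded for every non-empty open $W\subset\inter(\Omega(f))$, which in turn makes the identification ${\rm Rec}^+(f)=\bigcap_{m,N}A_{m,N}$ completely transparent.
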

\Proof
Let $\mathfrak{B}= \{U_i: ~i\in \NN\}$ be a countable topological base in $~\inter(\Omega(f))$.
For every $i \in \NN$, there is $n_i\in \NN$ such that $f^{n_i}(U_i)\cap U_i=\varnothing$. Let $x_i\in V_i:=f^{-n_i}(U_i)\cap U_i$. Since $\mathfrak{B}_k= \{U_i: ~ i \geq k \}$ is also a topological base, the set $\{x_i\}^{\infty}_{r=k}$ is dense in $~\inter(\Omega(f))$. So $\bigcup^{\infty}_{i=k}V_i$ is open and dense subset of $~\inter(\Omega(f))$. Then, $\cR^+:=\bigcap^{\infty}_{k=1} \bigcup^{\infty}_{i=k}V_i$ is residual. 
We claim that $\cR^+ \subset {\rm Rec}^+(f)$. Since  $\mathfrak{B}$  is a topological base, for any $\epsilon>0$ there is a $k_{\epsilon}$ such that, if $i>k_{\epsilon}$ then $\diam (U_i)<\epsilon$.
Now, for any $x\in \cR^+$ and for $i > k_{\epsilon}$, $x \in V_i$. So there is $n_i\in \NN$ such that, $d(f^{n_i}(x), x)< \diam (U_i) <\epsilon$. Since $\epsilon>0$ was arbitrary, this implies that $x$ is a positively recurrent point. We could do it for $f^{-1}$ to obtain a residual subset $\cR^-$ of negatively recurrent points. Any point in the residual set $\cR=\cR^- \cap \cR^+$ is positively and negatively recurrent. 
\Endproof

\begin{remark}
It follows from this lemma that if the
non-wandering set of a diffeomorphism has (large) non-empty interior, as in the case of Theorem  \ref{thm rec}, there is
an iterated function system of its nearby systems exhibiting transitivity in the interior of the non-wandering set. 
\end{remark}

We say that a point $x$ {\it converges to infinity} if for any compact set $U$ there is a number $n_{0}$ such that for any $n>n_{0}$, $f^{n}(x)\notin U$.

The following lemma is a corollary of a variation of Poincar\'e Recurrence Theorem for unbounded measures (due to Hopf, cf.  \cite{mn2}) which yields that for conservative homeomorphisms on manifolds with unbounded measure, almost all points either  are recurrent or converge to infinity.

\begin{lemma} \label{lem wander}
Let $f$  be a conservative homeomorphism on a non-compact manifold with unbounded volume. Then almost all points in  $\Omega(f)^{\complement}$ converge to infinity, in the future and also past iterations.
\end{lemma}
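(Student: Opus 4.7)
The plan is to deduce the lemma from classical Poincar\'e recurrence applied to the first-return map on compact pieces of an exhaustion of $M$. Although $f$ preserves an infinite $\sigma$-finite volume, each compact set still has finite volume, so the induced system on such a piece is a finite-measure system where Poincar\'e recurrence applies verbatim; this is the essence of Hopf's variation referred to in the statement.

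Concretely, I would fix an exhaustion $M=\bigcup_{m} K_{m}$ by compact sets with $K_{m}\subset\mathrm{int}(K_{m+1})$. A point $x$ fails to converge to $+\infty$ precisely when its forward orbit hits some $K_{m}$ infinitely often, so setting
$$A_{m}:=\{x\in M \mid f^{n}(x)\in K_{m}\text{ for infinitely many } n>0\},$$
the set of points whose forward orbit does not converge to infinity equals $A=\bigcup_{m} A_{m}$. It therefore suffices to prove ${\rm vol}(A_{m}\cap\Omega(f)^{\complement})=0$ for every $m$.

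The key step is to show that almost every point of $B_{m}:=A_{m}\cap K_{m}$ is topologically recurrent, and hence lies in $\Omega(f)$. On $B_{m}$ the first return time $\tau_{m}(x):=\min\{n>0\mid f^{n}(x)\in K_{m}\}$ is everywhere finite, so the first-return map $T_{m}(x)=f^{\tau_{m}(x)}(x)$ is a measure-preserving transformation of the finite measure space $(B_{m},{\rm vol}|_{B_{m}})$. Picking a countable open basis $\{V_{j}\}$ of $K_{m}$ and applying classical Poincar\'e recurrence to $T_{m}$, almost every $x\in B_{m}\cap V_{j}$ satisfies $T_{m}^{k}(x)\in V_{j}$ infinitely often. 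Intersecting these full-measure sets over $j$ and using that $\{V_{j}\}$ is a basis, almost every $x\in B_{m}$ is topologically recurrent for $f$, and in particular belongs to $\Omega(f)$. Thus ${\rm vol}(B_{m}\cap\Omega(f)^{\complement})=0$.

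To upgrade this from $B_{m}$ to $A_{m}$ I would use the $f$-invariance of $\Omega(f)^{\complement}$: for $x\in A_{m}$ the smallest $n_{0}\geq 0$ with $f^{n_{0}}(x)\in K_{m}$ exists and satisfies $f^{n_{0}}(x)\in B_{m}$, so $A_{m}\subset\bigcup_{n\geq 0}f^{-n}(B_{m})$ and therefore $A_{m}\cap\Omega(f)^{\complement}\subset\bigcup_{n\geq 0}f^{-n}(B_{m}\cap\Omega(f)^{\complement})$. Measure-preservation then gives
$${\rm vol}(A_{m}\cap\Omega(f)^{\complement})\leq \sum_{n\geq 0} {\rm vol}(B_{m}\cap\Omega(f)^{\complement})=0,$$
and summing over $m$ proves that almost every wandering point has forward orbit escaping to infinity. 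The identical argument applied to $f^{-1}$ (which preserves the same volume) handles the past iterates. The main delicate point is precisely this localisation step: one cannot invoke Poincar\'e recurrence directly on $(M,{\rm vol})$, since a translation on $\mathbb{R}^{n}$ already shows that $\sigma$-finite invariant measures admit wandering sets of positive measure; once one restricts to $B_{m}$ and passes to the first-return map, the rest of the argument is bookkeeping.
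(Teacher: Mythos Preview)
Your argument is correct. The paper does not actually prove this lemma: it simply records it as a corollary of Hopf's variant of the Poincar\'e recurrence theorem for $\sigma$-finite invariant measures (citing Ma\~n\'e's book), which says that almost every point is either recurrent or escapes to infinity; since recurrent points lie in $\Omega(f)$, the lemma follows at once. What you have written is essentially a self-contained proof of Hopf's theorem in this setting, via first-return maps on the pieces $K_m$ of a compact exhaustion---the standard route. So your approach and the paper's coincide in content; yours is the unpacked version of the citation.

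One small point worth making explicit: the claim that $T_m$ is measure-preserving on $B_m$ is not entirely automatic, because $T_m$ need not be surjective on $B_m$ (a point of $B_m$ may have no backward visit to $K_m$). It follows nonetheless: $T_m$ is injective on $B_m$ (distinct return times force distinct preimages), and on each level set $\{\tau_m=k\}$ it equals $f^{k}$, so $\mathrm{vol}(T_m(E))=\mathrm{vol}(E)$ for every $E\subset B_m$. Since $\mathrm{vol}(B_m)<\infty$, this forces $\mathrm{vol}(B_m\setminus T_m(B_m))=0$, and hence $\mathrm{vol}(T_m^{-1}E)=\mathrm{vol}(E)$ for all $E$. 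With this in hand, the Poincar\'e-recurrence step and the subsequent bookkeeping go through exactly as you wrote.
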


\Proofof{the dichotomy} 
It follows from Lemma \ref{lem wander} that almost all points in $N\setminus\Omega(f_2)$ converge to infinity (for future and past iterations).  Assume that $\Omega(f_2)=N$, but for some  $\tilde{f}_2$ close to $f_2$, $\Omega(\tilde{f}_2) \subsetneq N$. 

We show that the same statement about
transitivity and topologically mixing as the one in Theorem \ref{thm A'} (or Theorem \ref{thm A}) holds in the interior of $\Omega(G)$ for any $G$ close to the constructed family $F_\mu$. Indeed, we use the hypothesis $\Omega(\tilde{f}_2)= N$ only in the last step
of the proof of Theorem \ref{thm A'}, where we apply Lemma \ref{lem blender ph 2} which requires recurrence.

So we may repeat the proof of Theorem \ref{thm A'} to construct the family $F_\mu$. 
Let us fix $L>0$ large enough and  define $\Gamma_{F_\mu, L}$ as in Section \ref{s robust thm A}.
For a $G$ close to $F_\mu$, we may apply Theorem \ref{thm hps}, and so $\Gamma_G$, the continuation of $\Lambda \x N$, is well defined. As in Section \ref{s robust thm A}, let $\Gamma_{G, L}$  be the set defined by the set of trajectories  with strong stable and strong unstable manifolds of diameter $L$  connected to the blender. By the continuity of the strong stable and unstable foliations we see that $\Gamma_{G, L}$  is close to $\Gamma_{F_\mu, L}$ in the Hausdorff topology. 

Let $\tilde\Gamma_L$ be the interior of $\Gamma_{G,L} \cap \Omega(G)$ in $\Gamma_G $. We show that  $\tilde\Gamma_L$ is (robustly) topologically mixing.

If  $\tilde\Gamma_L = \Gamma_{G,L},$ then we may apply Lemma \ref{lem rec} for $G$ restricted to the continuation of $\{\hat{p}\}\x N$, and the proof of  Theorem \ref{thm A'} without any change can be repeated, obtaining therefore the transitivity (and topological mixing) of $\Gamma_G$.

Otherwise, assume that  $\tilde\Gamma_L \subsetneq \Gamma_{G,L}$. Now, recall that the periodic center fibers  are dense in $\Gamma_G$.
For any periodic fiber $\tN_x$ of period $j$ in $\Gamma_G$, almost all points in the interior of $\Omega(G^j|_{\tN_x})$ in $\tN_x$ are recurrent. So $\Gamma_{G, L} \bigcap \inter(\Omega(G^j|_{\tN_x}))$ is contained in the homoclinic class of the periodic point $P_G$. This implies that $\tilde\Gamma_L$ is contained in the homoclinic class of the periodic point $P_G$ and so it is topologically mixing. 
This means  in particular that topologically mixing property holds in the interior of non-wandering set of $G$.

 In fact, we have  proved a stronger statement: it holds in the interior of $\Omega(G)$ in $\Gamma_G$.  On the other hand, almost all points in the complement of these sets converge to infinity. 
This completes the proof of the dichotomy.
\Endproof


\section{Some remarks and open problems} \label{sec remarks}
 Several natural questions arise from  the main results of this paper. Here we mention just few  of them.


\subsection{Are hypotheses of Theorem \ref{thm A}  optimal?}\label{s hypo thm A}
As we mentioned after the statement of Theorem \ref{thm A} we wonder if its hypotheses are optimal. These speculations are based in several results proved in the $C^1$ topology:

\begin{enumerate}
\item It is has been shown that any robustly transitive invariant sets of symplectic diffeomorphisms on compact manifolds are partially  hyperbolic \cite{bdp, dpu, ht, sx}. This means that, the hypothesis (b) is not avoidable to obtain robustness of transitivity (see also Section \ref{s problem ph}).

\item  It has been conjectured that any symplectic diffeomorphism is $C^r$ approximated either by an Anosov or by a diffeomorphism with a quasi-elliptic periodic point. The case $r=1$ has been proved by Newhouse \cite{ne}.
On the other hand,  from Theorem \ref{thm ZN} it follows that any diffeomorphism having a quasi-elliptic periodic point is $C^r$ approximated  by one exhibiting a transversal homoclinic point. Consequently, a $C^1$ generic diffeomorphism $f_1$ satisfies the hypothesis (a).  
Moreover, if $\dim(N)=2$, a generic diffeomorphism $f_2$ either satisfies the hypothesis (c) or  is an Anosov system and so the conclusions of Theorem \ref{thm A} follow. This means that, up to the mentioned conjecture, for generic systems the only essential assumption in Theorem \ref{thm A} is the hypothesis (b).

\end{enumerate}


\subsection{Transitivity and partial hyperbolicity}\label{s problem ph}
The first question concerns the genericity of  robustly mixing partially
hyperbolic sets. Theorem \ref{thm A} suggests that the answer of the following
problem would be positive. 
\begin{problem} Does there exist a residual set \biR$~\subset \diff^r_\w(M)$, $1 \leq r \leq \infty$, such that if $f \in$ \biR, then any normally hyperbolic invariant submanifold $N$ for $f$ with transversal intersection between its stable and unstable manifolds is topologically mixing, provided that $N \subset \inter(\Omega(f))$?
\end{problem}

As in the case of $C^1$ topology (see  \cite{dpu}, \cite{bdp} and  \cite{ht}), we believe that the partial hyperbolicity condition is {\it necessary} for  $C^r$ robustness of mixing. 

\begin{problem}
Let $(M,\w)$ be a symplectic manifold. Suppose that $\Gamma$ is a robustly topologically mixing invariant set for $f$ in $\diff^r_\w(M)$. Is it a partially hyperbolic set? 
\end{problem}
The previous  problem is also related to the $C^r$ stability conjecture which is still open for $r\geq 2$.


\subsection{Ergodicity and stable ergodicity}\label{s ergodic}
In the context of Theorem \ref{thm A}, it follows that for any symplectic diffeomorphism $G$ close to $f_{1}\x f_{2}$, there is a natural invariant  measure $\rho_{_{G}}$ supported on the continuation of $\Lambda \x N$, which  has the form of skew product of the volume on the central fibers over the Bernoulli measure on shift.
So, one may ask about the ergodicity of this measure:

\begin{problem} \label{quest erg}
Is it possible to approximate the product $f_1\x f_2$ of Theorem 
\ref{thm A} in the  $C^{\infty}$ topology by a symplectic diffeomorphism $G$ for which the invariant measure
$\rho_{_{G}}$ supported on the continuation of $\Lambda\x N$ is ergodic or stably
ergodic?  
\end{problem}

In most works about stable ergodicity, a main step is proving (stable) accessibility (cf.  \cite{psh}, \cite{bw}).  Accessibility, roughly speaking, means that any pair of point of a partially hyperbolic set are jointed by a piecewise smooth arc that locally lies in the stable or unstable  manifolds of its points. 
The ``global'' partial hyperbolicity is a fundamental assumption in all results involving  ``stable'' accessibility. 
So, Problem  \ref{quest erg} asks for  new methods. More generally, it highlights the need of a theory for non-global stable ergodicity.


\subsection{Iterated function systems} Motivated  by results in Section \ref{s ifs rec}, it is possible to formulate a series of problems for generic IFS.

\begin{problem} \label{transitive}
Is the iterated function system of any  $C^{r}$-generic pair  of symplectic diffeomorphisms robustly transitive (or topologically mixing, robustly  topologically mixing, minimal)?
\end{problem}

Some progress on this problem has been done very recently in \cite{kn} for the case of surface diffeomorphisms.  

\begin{problem} \label{ergodic}
Is the iterated function system of any  $C^{r}$-generic pair of conservative or symplectic diffeomorphisms ergodic (or  stably ergodic, mixing, etc.)?
\end{problem}

Results in this direction would be very helpful to study the dynamics of certain partially hyperbolic sets. 


\subsection{Other contexts.}\label{applications} We believe that the methods developed here could be applied in other  context, someone with mechanical meanings. We list a few ones.

 {\em Analytic symplectic and Hamiltonian systems}. The perturbations involved in the present paper are typically $C^r$ for any $r\geq 1$. However, we believe that some of them could be performed in the analytic context, specially the ones involving the creation of blenders.

{\em The dynamics near the (quasi) elliptic periodic points in dimensions $\geq 4$}. The dynamic near elliptic periodic points could display transversal homoclinic intersections associated to a hyperbolic periodic points (with a hyperbolic part dominating a $\delta-$weaker hyperbolic one). This situation is similar to the once in Theorem \ref{thm A}.

{\em  Perturbations of time independent Hamiltonian systems by periodic potentials.} Observe that in Theorem \ref{thm C}, the second Hamiltonian is time independent and it is perturbed by a time periodic one. It is natural to wonder if the same approach can be performed for the case that the perturbation is given by a time periodic potential.

{\em  Generic energy levels of time independent Hamiltonian systems}. Observe that some of the Hamiltonians involved in Theorem \ref{thm C} 
 are time periodic. The goal, therefore, would be to extend some of the present results to the time independent context.

{\em Specific mechanical problems}.  Of course, it would be essential to try to apply the present theorems to a concrete mechanical problem.  We wonder if improvement  of our results can be useful in these mechanical models with the goal to obtain robust transitivity.   Of course, a major challenge would be to apply the present approach to the context of the restricted 3-body problem.

{\em  Perturbations of geodesic flows on surfaces by periodic potentials.} It is natural to try to extend the  results about robust transitivity to the case of a metric perturbed by a time periodic potential.

{\em Geodesic flows on manifolds of dimensions larger that two}. A natural context for this problem is to consider warped product (close to a direct product) between a metric of negative curvature with  the  spherical metric. Locally, the new metric resembles the product of a hyperbolic system with an integrable one.

\References{99999}

\bibitem[ABC]{abc}
\name{F. Abdenur, C. Bonatti, S. Crovisier},
Nonuniform hyperbolicity for $C^1$-generic diffeomorphisms 
\jname{Israel J. Math.} {\bf 183} (2011) 1--60.

\bibitem[ArBC]{arbc}
\name{M.-C. Arnaud, C. Bonatti, S. Crovisier},
Dynamiques symplectiques g\'en\'eriques. 
\jname{Ergodic Theory Dynam. Systems}  {\bf 25}  (2005),  no. 5, 1401--1436.

\bibitem[A]{la}
\name{L. Arnold}, \jname{Random dynamical systems}. Springer Monographs in  Mathematics. Springer-Verlag, Berlin, 1998. xvi+586 pp.

\bibitem[A1]{a1} 
\name{V.~I. Arnold},
Small denominators and problems of stability of motion in classical and celestial mechanics, 
\jname{ Uspehi Mat. Nauk} {\bf 18} (1963), no.~6 (114), 91--192

\bibitem[A2]{a2}
\name{V.~I. Arnold},
Instability of dynamical systems with several degrees of freedom, 
\jname{Dokl. Akad. Nauk SSSR} {\bf 156} (1964), 9--12.

\bibitem[AKN]{akn}
\name{V.~I. Arnold, V.V. Kozlov, A.I. Neishtadt},
\jname{Mathematical aspects of classical and celestial mechanics},
Springer Verlag, 2006.

\bibitem[BG]{bg} \name{N. Berglund, B. Gentz},
\jname{Noise-induced phenomena in slow-fast dynamical systems: a sample-path approach},
Birkhauser, 2005.

\bibitem[BK]{bk}
\name{D.~Bernstein and A.~Katok},
Birkhoff periodic orbits for small perturbations of completely
  integrable {H}amiltonian systems with convex {H}amiltonians.
\jname{Invent. Math.}, \textbf{88} (1987), no. 2, 225--241.

\bibitem[BC]{bc}
\name{C. Bonatti, S. Crovisier},
R\'ecurrence et g\'en\'ericit\'e. 
\jname{Invent. Math. }  {\bf 158}  (2004),  no. 1, 33--104.

\bibitem[BD]{bd} 
\name{ C.~Bonatti, L.~J. D{\'\i}az},
Persistence nonhyperbolic transitive diffeomorphisms,
\jname{Annals of Math.} (2), {\bf 143} (1996) 357--396.

\bibitem[BD2]{bd08} 
\name{ C.~Bonatti, L.~J. D{\'\i}az},
Robust heterodimensional cycles and $C^1$-generic dynamics, 
\jname{J. Inst. Math. Jussieu}, {\bf 7}, (2008),   Issue 3, 469--525.

\bibitem[BDP]{bdp}
\name{C.~Bonatti, L.~J. D{\'\i}az, E. Pujals},
A $C^1$-generic dichotomy for diffeomorphisms: Weak forms of hyperbolicity or infinitely many sinks or sources, 
\jname{Annals of Math.} (2), {\bf 158}  (2003),  no. 2, 355--418.

\bibitem[BDV]{bdv} 
\name{ C.~Bonatti, L.~J. D{\'\i}az, M.~Viana},
\jname{Dynamics beyond uniform hyperbolicity},
Encyclopedia of Mathematical Sciences, Springer Verlag, 2004.

\bibitem[BW]{bw} 
\name{K. Burns, A. Wilkinson}. On the ergodicity of partially hyperbolic systems. 
\jname{Annals  of Math.} (1), {\bf 171} (2010),  451--489.

\bibitem[CY]{cy}
\name{C. -Q. Cheng, J. Yan}, 
Existence of Diffusion Orbits in a priori Unstable Hamiltonian Systems,
\jname{J. Diff. Geom.}  {\bf 67} (2004)  

\bibitem[DGLS]{dgls} 
\name{ A. Delshams, M. Gidea,  R. de la Llave, T.M. Seara},
Geometric approaches to the problem of instability in Hamiltonian systems. An informal presentation.  \jname{Hamiltonian dynamical systems and applications},  285--336, NATO Sci. Peace Secur. Ser. B Phys. Biophys., Springer, Dordrecht, 2008. 

\bibitem[DLS]{dls} 
\name{ A. Delshams, R. de la Llave, T.M. Seara},
A geometric mechanism for diffusion in Hamiltonian systems overcoming the large gap problem: heuristics and rigorous verification on a model,
\jname{Mem. Amer. Math. Soc.}  \textbf{179}  (2006),  no. 844, viii+141 pp.

\bibitem[DLS2]{dls06} 
\name{ A. Delshams, R. de la Llave, T.M. Seara},
Orbits of unbounded energy in quasi-periodic perturbations of geodesic flows. (English summary)
\jname{Adv. Math.} \textbf{202} (2006), no. 1, 64--188. 

\bibitem[DPU]{dpu}
\name{L.~J. D{\'\i}az, E.~R. Pujals, R. Ures},
Partial hyperbolicity and robust transitivity,
\jname{Acta Math.} 183  (1999),  no. 1, 1--43.

\bibitem[D]{du}
\name{R. Douady},
Stabilit\'e ou instabilit\'e des points fixes elliptiques,
\jname{Ann. Sci. \'Ecole Norm. Sup. (4)}, \textbf{21} (1988) no. 1, 1--46.

\bibitem[GHS]{GHS} 
\name{F.H. Ghane, A.J. Homburg, A. Sarizadeh}
$C^1$-robustly minimal iterated function systems, 
\jname{Stoch. Dyn. 10 (2010), n0. 1, 155--160}

\bibitem[HPS]{hps} 
\name{M. Hirsch, C. Pugh, M. Shub},
\jname{Invariant manifolds},
Lecture Notes in Math. 583, Springer, Berlin, 1977.

\bibitem[HT]{ht}
\name{V. Horita, A. Tahzibi}, Partial hyperbolicity for symplectic diffeomorphisms. \jname{ Ann. Inst. H. Poincar\'e Anal. Non Lin\'eaire} \textbf{ 23}  (2006),  no. 5, 641--661.

\bibitem[KL]{kl} \name{V. Kaloshin, M. Levi}  An example of Arnold diffusion for near-integrable Hamiltonians,
\jname{Bull. Amer. Math. Soc.} \textbf{45}  (2008), 409-427. 
 
\bibitem[KMV]{klm} 
\name{V. Kaloshin, J. Mather, E. Valdinoci},
Instability of resonant totally elliptic points of symplectic maps in dimension 4, 
\jname{Asterisque},  No. 297 (2004), 79--116.

\bibitem[K]{kat}
\name{A.~Katok},
Lyapunov exponents, entropy and periodic orbits for diffeomorphisms.
\jname{Publ. Math. Inst. Hautes {\'E}tudes Sci.},  \textbf{51} (1980),137--173.

\bibitem[KN]{kn} \name{A. Koropecki, M. Nassiri},  Generic transitivity for area-preserving iterated functions systems. \jname{Math. Z.}, {\bf 266}, (2010), 707--718;  ibid. {\bf 268}, (2011), 601--604.

\bibitem[M\~n]{mn}
\name{R. Ma\~n\'e},
Contributions to the $C^1$-stability conjecture, 
\jname{Topology} {\bf 17} (1978), 386--396.

\bibitem[M\~n2]{mn2}
\name{R. Ma\~n\'e},
Ergodic theory of differentiable dynamics, Springer Verlag, 1987.

\bibitem[MS]{ms}
\name{J.P. Marco, D. Sauzin} 
Stability and instability for Gevrey quasi-convex near-integrable Hamiltonian systems.  
\jname{Publ. Math. Inst. Hautes \'Etudes Sci.}  No. \textbf{96}  (2002), 199--275.

\bibitem[Ma]{ma}
\name{J. Mather},
Arnold Diffusion. I: Announcement of Results,
\jname{Journal of Mathematical Sciences} \textbf{124} (5) : 5275--5289, December 2004.

\bibitem[Mo]{mo}
\name{R. Moeckel},
Generic drift on Cantor sets of annuli,
 in Celestial mechanics (Evanston, IL, 1999), \jname{Contemp. Math.}  {\bf 292} (2002), 163-171.

\bibitem[N]{thesis} 
\name{M. Nassiri},
Robustly Transitive Sets in Nearly Integrable Hamiltonian Systems. 
\jname{Thesis at IMPA (2006).}

\bibitem[Ne]{ne}
\name{S. Newhouse},
Quasi-elliptic periodic points in conservative dynamical systems,
\jname{Amer. J. Math.} \textbf{99} (1977), no. 5, 1061--1087.

\bibitem[PSh]{psh}
\name{C. Pugh, M. Shub},
Stable ergodicity. With an appendix by Alexander Starkov,
\jname{Bull. Amer. Math. Soc. (N.S.)}  \textbf{41}  (2004),  no. 1, 1--41.

\bibitem[PS]{ps}
\name{E.~R. Pujals, M. Sambarino},
Homoclinic bifurcations, dominated splitting, and robust transitivity,
\jname{Handbook of dynamical systems. Vol. 1B},  327--378, Elsevier B. V., Amsterdam, 2006.

\bibitem[R]{rob}
\name{R.~C. Robinson},
Generic properties of conservative systems {I, II}.
\jname{Amer. J. Math.}, \textbf{92} (1970), 562--603, 897--906.

\bibitem[SX]{sx}
\name{R. Saghin, Z. J. Xia } Partial hyperbolicity or dense elliptic periodic points for $C^{1}$-generic symplectic diffeomorphisms.  \jname{Trans. Amer. Math. Soc.}  \textbf{358}  (2006),  no. 11, 5119--5138

\bibitem[Sh]{sh}
\name{M. Shub},
Topologically transitive diffeomorphisms $\TT^4$, 
\jname{Lecture Notes in Math.} {\bf 206}, Springer-Verlag, New York, 1971.

\bibitem[SW]{sw}
\name{M. Shub,  A. Wilkinson},
Stably ergodic approximation: two examples,
\jname{Ergodic Theory Dynam. Systems}  \textbf{20}  (2000),  no. 3, 875--893.

\bibitem[X]{x}
\name{Z. J. Xia},
Arnold diffusion: a variational construction,
Proceedings of the International Congress of Mathematicians, Vol. II (Berlin, 1998).  \jname{Doc. Math. } \textbf{1998},  Extra Vol. II, 867--877.

\bibitem[Z]{z}
\name{E. Zehnder},
Homoclinic points near elliptic fixed points,
\jname{Comm. Pure Appl. Math.} \textbf{26} (1973), 131--182.

\Endrefs
\vspace*{.4cm}

\end{document}